\documentclass{amsart}

 \usepackage{amsmath}
\usepackage{amsfonts}
\usepackage{bbm}
\usepackage[dvips]{graphicx}
\usepackage{color}
\usepackage{float} 
\usepackage{fullpage}
\usepackage{epsfig} 

\usepackage{url}






\newtheorem{thm}{\bf{Theorem}}[section]
\newtheorem{lemma}[thm]{\bf{Lemma}}

\newtheorem{cor}[thm]{\bf{Corollary}}

\newtheorem{prop}[thm]{\bf{Proposition}}

\newtheorem{rem}[thm]{\bf{Remark}}

\begin{document}

\title[]{Inexact cuts for Deterministic and Stochastic Dual Dynamic Programming applied to convex nonlinear optimization problems}


\maketitle

\begin{center}
Vincent Guigues\\
School of Applied Mathematics, FGV\\
Praia de Botafogo, Rio de Janeiro, Brazil\\ 
{\tt vguigues@fgv.br}
\end{center}

\date{}

\begin{abstract} We introduce an extension of Dual Dynamic
Programming (DDP) to solve convex nonlinear dynamic programming equations.
We call this extension Inexact DDP (IDDP) which 
applies to situations where some or all primal and dual subproblems to be solved along the iterations
of the method
are solved with a bounded error (inexactly). We show that any accumulation point of the sequence
of decisions is an approximate solution to the dynamic programming equations.
When these errors tend to zero as the number of iterations goes to infinity,
we show that IDDP solves the dynamic programming equations.
We extend the analysis to stochastic convex nonlinear dynamic programming equations,
introducing Inexact Stochastic Dual Dynamic Programming (ISDDP), an inexact variant of SDDP 
corresponding to the situation where some or all problems to be solved in the forward and backward passes of SDDP 
are solved approximately. We also show the almost sure convergence of ISDDP for vanishing errors.\\
\end{abstract}

\par {\textbf{Keywords:} Stochastic programming; Inexact cuts for value functions; Bounding $\varepsilon$-optimal dual solutions; SDDP; Inexact SDDP.}\\

\par AMS subject classifications: 90C15, 90C90.

\section{Introduction}

Stochastic Dual Dynamic Programming (SDDP) is a sampling-based 
extension of the nested decomposition method \cite{birgemulti} to solve 
some $T$-stage stochastic programs, pioneered by \cite{pereira}.
Originally, in \cite{pereira}, it was presented to solve Multistage Stochastic
Linear Programs (MSLPs). Since many real-life applications in, e.g., finance and engineering,
can be modelled by such problems, until recently most papers on SDDP and related
decomposition methods, especially theory papers, focused on enhancements of the method for MSLPs.
These enhancements include risk-averse SDDP \cite{shapsddp}, \cite{guiguesrom12} \cite{guiguesrom10}, \cite{philpmatos},
\cite{kozmikmorton}, \cite{shaptekaya} and a convergence proof in \cite{philpot}.

However, SDDP can be applied to solve nonlinear stochastic convex dynamic programming equations. For such problems,
the convergence of the method was proved recently in \cite{lecphilgirar12} for risk-neutral problems, in \cite{guiguessiopt2016}
for risk-averse problems, and in \cite{guilejtekregsddp} for a regularized variant implemented on a nonlinear dynamic portfolio model with market impact costs.

To the best of our knowledge, all studies on SDDP
rely on the assumption that all primal and
dual subproblems solved in the forward and backward passes of the method are solved exactly. However, when these methods are applied to nonlinear problems,
only approximate solutions are available for the subproblems solved in the forward and backward passes of the algorithm.
In this context, the objective of this paper is to design variants of DDP
(the deterministic counterpart of SDDP) and SDDP 
to solve nonlinear convex dynamic programming equations that take this fact into account. 
We call the corresponding variants of DDP and SDDP Inexact DDP (IDDP) and Inexact SDDP (ISDDP). It should be mentioned, however, that
there is another motivation for considering inexact variants of DDP and SDDP. Indeed, it is known (see for instance the numerical experiments
in \cite{guiguesbandarra17}, \cite{guiguesejor2017}) that for the first iterations of the method and especially for the first stages, the cuts computed can be quite
distant from the corresponding recourse function in the neighborhood of the trial point at which the cut was computed, so this cut is quickly
dominated by other "more relevant" cuts in this neighborhood. Therefore, it makes sense to try and solve more quickly and less accurately (inexactly)
all subproblems of the forward and backward passes corresponding to the first iterations, especially for the first stages, and to increase the precision of the computed solutions as the algorithm progresses.

While the idea behind IDDP and ISDDP is simple and the motivations clear, the description and convergence analysis of IDDP and ISDDP
require solving the following problems of convex analysis, interesting per se, and which, to the best of our knowledge, had not been discussed so far
in the literature:
\begin{itemize}
\item SDDP for nonlinear programs relies on a formula for the subdifferential of the value function $\mathcal{Q}(x)$ of a convex optimization problem
of form: 
\begin{equation}\label{qdefintro}
\mathcal{Q}(x)=\left\{
\begin{array}{l}
\inf_{y \in \mathbb{R}^{n}} \;f(y,x)\\
y\in Y \;:\;Ay+Bx=b,\;g(y,x)\leq 0,
\end{array}
\right.
\end{equation}
where $Y \subseteq \mathbb{R}^n$ is nonempty and convex, $f:\mathbb{R}^n \small{\times} \mathbb{R}^m \rightarrow \mathbb{R} \cup \{+\infty\}$ is
convex, lower semicontinuous, and proper, and the components of 
$g$ are convex lower semicontinuous functions.
Formulas for the subdifferential $\partial \mathcal{Q}(x)$ are given in \cite{guiguessiopt2016}.
These formulas are based on the assumption that primal and dual solutions to \eqref{qdefintro} are available.
When only approximate $\varepsilon$-optimal primal and dual solutions are available for \eqref{qdefintro}
written with $x=\bar x$, we derive formulas for 
affine lower bounding functions $\mathcal{C}$  for $\mathcal{Q}$, that we call inexact cuts,
such that the distance $\mathcal{Q}( \bar x ) - \mathcal{C}( \bar x)$ between the 
values of $\mathcal{Q}$ and of the cut at $\bar x$ is bounded from above by a known function $\varepsilon_0$ of the problem
parameters.
Of course, we would like $\epsilon_0$ to be as small as possible
and $\varepsilon_0=0$ when $\varepsilon=0$.
Two cases are considered: 
\begin{itemize}
\item[(i)] the case when the feasible set
of \eqref{qdefintro} is $Y$, i.e., when the argument $x$ of $\mathcal{Q}$ appears only in the objective function of
\eqref{qdefintro}. In this situation, formulas for inexact cuts are given in Proposition \ref{fixedprop1}, with a refined
bound on $\varepsilon_0$ given in Propositions \ref{fixedprop1b} and \ref{fixedprop2} under an additional assumption.
\item[(ii)] the general case of a value function of form \eqref{qdefintro}.
The corresponding inexact cuts are given in Propositions \ref{varprop1} and \ref{varprop2}.
\end{itemize}
\item We provide conditions ensuring that $\varepsilon$-optimal dual solutions to a convex nonlinear optimization problem
are bounded. Proposition \ref{propboundnormepsdualsol} gives an analytic formula for an upper bound on the norm of these $\varepsilon$-optimal dual solutions.
\item We show in Propositions \ref{propvanish1} and \ref{propvanish1dual}
that if we compute inexact cuts for a sequence $({\underline{\mathcal{Q}}}^k)$  of value functions of the form \eqref{qdefintro}
(with objective functions $f^k$ of special structure)
at a sequence of points $(x^k)$ on the basis of $\varepsilon^k$-optimal primal and dual solutions
with $\lim_{k \rightarrow +\infty} \varepsilon^k = 0$, then
the distance between the inexact cuts and the value functions at these points $x^k$  converges to 0 too.
This result is very natural (see Propositions \ref{propvanish1} and \ref{propvanish1dual}) but some constraint qualification conditions
are needed.
\end{itemize}

When optimization problem \eqref{qdefintro} is linear, i.e., when $\mathcal{Q}$ is the value function 
of a linear program, inexact cuts can easily be obtained from approximate dual solutions
since the dual objective is linear in this case.
This observation was used in \cite{philpzakinex} where inexact cuts are combined with
Benders Decomposition \cite{benderscut} to solve two-stage stochastic linear programs.
In this sense, our work can be seen as an extension of \cite{philpzakinex} where 
two-stage stochastic linear problems are considered whereas ISDDP applies to 
multistage stochastic nonlinear problems. 
In integer programming, inexact master solutions are also commonly used in Benders-like methods \cite{danieldevine},
including in SDDiP, a variant of SDDP to solve multistage stochastic linear programs with integer variables
introduced in \cite{shabbirzou}.

The outline of the study is as follows. 
Section \ref{sec:computeinexactcuts} provides analytic formulas for computing inexact cuts for a value function of an optimization problem
of the form \eqref{qdefintro}. In Section \ref{sec:boundingmulti}, we provide an explicit bound for the norm of $\varepsilon$-optimal dual solutions.
Section \ref{iddp} introduces and studies the IDDP method. The class of problems to which this method applies
is described in Subsection \ref{iddp1}. The detailed IDDP algorithm is given in Subsections \ref{iddp2}-\ref{iddp4} while Subsection \ref{iddp5} studies the convergence of IDDP.
For a problem with  $T$ periods, when noises (error terms quantifying the inexactness) are bounded, by, say, ${\bar{\varepsilon}}$,  
we show in Theorem \ref{convaniddp} and Corollary \ref{corconviddp} that
any accumulation point of the sequence
of decisions  is a $\frac{T(T+1)}{2}({\bar \delta} + {\bar {\varepsilon}})$-optimal solution
 to the problem  where 
 ${\bar \delta}$ is an upper bound on the distance between the value of 
 (theoretical) exact cuts and the value of our inexact cuts at the trial points computed by the algorithm.
It is interesting to see the quadratic dependence of the global error with respect to the number of periods and the linear dependence with respect to noises.
When noises are vanishing we prove that IDDP solves the nonlinear dynamic programming equations (see Theorem \ref{convaniddp}).
Section \ref{sec:isddp} introduces and studies ISDDP. The class of problems to which ISDDP applies is given in Subsection \ref{sec:sddp1}.
A detailed description of ISDDP is given in Subsection \ref{sec:isddpalgo} and its convergence is studied
in Subsection \ref{sec:convsddp}. More precisely, Theorem \ref{convanisddp} shows the convergence of the method when the noises vanish.

We use the following notation and terminology:
\par - The usual scalar product in $\mathbb{R}^n$ is denoted by $\langle x, y\rangle = x^T y$ for $x, y \in \mathbb{R}^n$.
The corresponding norm is $\|x\|=\|x\|_2=\sqrt{\langle x, x \rangle}$.
\par - $\mbox{ri}(A)$ is the relative interior of set $A$.
\par - $\mathbb{B}_n(x_0, r)=\{x \in \mathbb{R}^n : \|x - x_0\| \leq r\}$ for $x_0 \in \mathbb{R}^n, r \geq 0$.
\par - dom($f$) is the domain of function $f$.
\par - $\mbox{Diam}(X)=\max_{x, y \in X} \|x-y\|$ is the diameter of $X$.
\par - $\mathcal{N}_A(x)$ is the normal cone to $A$ at $x$.
\par - $X^\varepsilon := X + \varepsilon  \mathbb{B}_n(0, 1)$ is  the $\varepsilon$-fattening of the set $X \subset \mathbb{R}^n$.
\par - $\mathcal{C}(\mathcal{X})$ is the set of continuous real-valued functions on $\mathcal{X}$, equipped with the
norm $\|f\|_{\mathcal{X}} = \sup_{x \in \mathcal{X}}|f(x)|$. 
\par - $\mathcal{C}^1(\mathcal{X})$ is the set of real-valued continuously differentiable functions on $\mathcal{X}$. 
\par - span($X$) is the linear span of set of vectors $X$ and Aff($X$) is the affine span of $X$.

\section{Computing inexact cuts for the value function of a convex optimization problem}\label{sec:computeinexactcuts}

Let $\mathcal{Q}: X\rightarrow {\overline{\mathbb{R}}}$ be the value function given by
\begin{equation} \label{vfunctionq}
\mathcal{Q}(x)=\left\{
\begin{array}{l}
\inf_{y \in \mathbb{R}^{n}} \;f(y,x)\\
y \in S(x):=\{y\in Y \;:\;Ay+Bx=b,\;g(y,x)\leq 0\}.
\end{array}
\right.
\end{equation}
Here, $X \subseteq \mathbb{R}^m$ and $Y \subseteq \mathbb{R}^n$ are nonempty, compact,  and convex
sets, and $A$ and $B$ are respectively $q \small{\times} n$ and $q \small{\times} m$ real matrices.
We will make the following assumptions which imply, in particular, the convexity of $\mathcal{Q}$ given
by \eqref{vfunctionq}:\\

\par (H1) $f:\mathbb{R}^n \small{\times} \mathbb{R}^m \rightarrow \mathbb{R} \cup \{+\infty\}$ is 
lower semicontinuous, proper, and convex.
\par (H2) For $i=1,\ldots,p$, the $i$-th component of function
$g(y, x)$ is a convex lower semicontinuous function
$g_i:\mathbb{R}^n \small{\times} \mathbb{R}^m \rightarrow \mathbb{R} \cup \{+\infty\}$.\\

In what follows, we say that $\mathcal{C}$ is a cut for $\mathcal{Q}$ if $\mathcal{C}$ is an affine
function of $x$ such that $\mathcal{Q}(x) \geq \mathcal{C}(x)$ for all $x \in X$. We say that the cut
is exact at $\bar x \in X$ if $\mathcal{Q}(\bar x) = \mathcal{C}(\bar x)$.
Otherwise, the cut is said to be inexact.

In this section,  our basic goal is, given ${\bar x} \in X$ and
$\varepsilon$-optimal primal and dual solutions of \eqref{vfunctionq} written for $x=\bar x$, to derive
an inexact cut $\mathcal{C}(x)$ for $\mathcal{Q}$ at $\bar x$, i.e., an affine lower bounding function
for $\mathcal{Q}$ such that the distance
$\mathcal{Q}( \bar x ) - \mathcal{C}( \bar x)$ between the 
values of $\mathcal{Q}$ and of the cut at $\bar x$ is bounded from above by a known function of the problem
parameters. Of course, when $\varepsilon=0$, we will check that 
$\mathcal{Q}( \bar x )= \mathcal{C}( \bar x)$.

We first recall from \cite{guiguessiopt2016} how to compute exact cuts for $\mathcal{Q}$ when optimal primal
and dual solutions of \eqref{vfunctionq} are available.

\subsection{Formula for the subdifferential of the value function of a convex optimization problem} \label{propvaluefunction}

Consider for \eqref{vfunctionq} the dual problem 
\begin{equation}\label{dualpb}
\displaystyle \sup_{(\lambda, \mu) \in \mathbb{R}^q \small{\times} \mathbb{R}_{+}^{p} }\; \theta_{x}(\lambda, \mu)
\end{equation}
for the dual function
\begin{equation}\label{dualfunctionx}
\theta_{x}(\lambda, \mu)=\displaystyle \inf_{y \in Y} \;f(y, x) + \lambda^T (Ay+Bx-b) + \mu^T g(y,x).
\end{equation}
We denote by $\Lambda(x)$ the set of optimal solutions of the  dual problem \eqref{dualpb}
and we use the notation
$$
\mbox{Sol}(x):=\{y \in S(x) : f(y, x)=\mathcal{Q}(x)\}
$$
to indicate the solution set to \eqref{vfunctionq}.

The description of the subdifferential of $\mathcal{Q}$ is given in
the following lemma:
\begin{lemma}\label{dervaluefunction} 
Consider the value function $\mathcal{Q}$ given by \eqref{vfunctionq} and take $x_0 \in X$
such that $S(x_0)\neq \emptyset$.
Let Assumptions (H1) and (H2) hold and
assume the Slater-type constraint qualification condition:
$$
there \;exists\; (\bar x, \bar y) \in X{\small{\times}}\emph{ri}(Y) \mbox{ such that }A \bar y + B \bar x = b \mbox{ and } (\bar y, \bar x) \in \emph{ri}(\{g \leq 0\}).
$$
We also assume that there exists $\varepsilon>0$ such that $Y{\small{\times}}  X^{\varepsilon} \subset \mbox{dom}(f)$.
Then $s \in \partial \mathcal{Q}(x_0)$ if and only if
\begin{equation}\label{caractsubQ}
\begin{array}{l}
(0, s) \in  \partial f(y_0, x_0)+\Big\{[A^T; B^T ] \lambda \;:\;\lambda \in \mathbb{R}^q\Big\}\\
\hspace*{1.2cm}+ \Big\{\displaystyle \sum_{i \in I(y_0, x_0)}\; \mu_i \partial g_i(y_0, x_0)\;:\;\mu_i \geq 0 \Big\}+\mathcal{N}_{Y}(y_0)\small{\times}\{0\},
\end{array}
\end{equation}
where $y_0$ is any element in the solution set \mbox{Sol}($x_0$) and with
$$I(y_0, x_0)=\Big\{i \in \{1,\ldots,p\} \;:\;g_i(y_0, x_0) =0\Big\}.$$ Moreover, the set $\cup_{x \in X} \partial \mathcal{Q}(x)$ is bounded. In particular, if $f$  and $g$ are differentiable, then 
$$
\partial \mathcal{Q}(x_0)=\Big\{  \nabla_x f(y_0, x_0)+ B^T \lambda + \sum_{i \in I(y_0, x_0 )}\; \mu_i \nabla_x g_i(y_0, x_0)\;:\; (\lambda, \mu) \in \Lambda(x_0) \Big\}.
$$
\end{lemma}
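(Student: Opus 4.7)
The plan is to express $\mathcal{Q}$ as the marginal of a convex bivariate function and then apply standard marginal/sum rules of convex analysis, with the Slater-type hypothesis guaranteeing the required constraint qualifications.

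First, I would introduce the function
\begin{equation*}
F(y,x) = f(y,x) + \delta_Y(y) + \delta_{L}(y,x) + \sum_{i=1}^p \delta_{\{g_i \leq 0\}}(y,x),
\end{equation*}
where $L=\{(y,x):Ay+Bx=b\}$ and $\delta_C$ denotes the indicator of $C$, so that $\mathcal{Q}(x) = \inf_y F(y,x)$. Under (H1)--(H2), $F$ is proper, lsc and convex, and the domain assumption $Y\times X^\varepsilon \subset \mathrm{dom}(f)$ ensures that $\mathcal{Q}$ is finite on a neighborhood of $X$. The key ingredient is the marginal function formula: if $y_0\in \mathrm{Sol}(x_0)$, then
\begin{equation*}
\partial \mathcal{Q}(x_0)=\{s\in\mathbb{R}^m:(0,s)\in\partial F(y_0,x_0)\}.
\end{equation*}
I would justify this by combining the standard bivariate marginal theorem (see e.g.\ Rockafellar, Thm.~23.8 / Hiriart-Urruty--Lemar\'echal) with the fact that, owing to the domain assumption, $\mathcal{Q}$ is continuous at $x_0$.

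Next, I would apply the Moreau--Rockafellar sum rule to split $\partial F(y_0,x_0)$ into a sum of four subdifferentials: $\partial f(y_0,x_0)$, $\mathcal{N}_Y(y_0)\times\{0\}$, the subdifferential of $\delta_L$ (equal to the range of $(\lambda \mapsto [A^T;B^T]\lambda)$), and the subdifferential of $\sum_i \delta_{\{g_i\leq 0\}}$. To validate the sum rule, I use the Slater-type condition to produce a feasible point in the relative interior of each constraint set; this is the usual qualification for summing convex subdifferentials. The last piece, after invoking the chain rule/normal cone formula for a sublevel set of a convex function (which needs Slater again), becomes $\sum_{i\in I(y_0,x_0)}\mu_i\partial g_i(y_0,x_0)$ with $\mu_i\geq 0$, since inactive indices force $\mu_i=0$. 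Assembling these four pieces and isolating the components $(0,s)$ gives exactly the characterization \eqref{caractsubQ}.

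For the boundedness of $\bigcup_{x\in X}\partial\mathcal{Q}(x)$, I would argue as follows: since $\mathcal{Q}$ is convex and finite on $X^{\varepsilon/2}$ (thanks to $Y\times X^\varepsilon\subset\mathrm{dom}(f)$ and compactness of $Y$), $\mathcal{Q}$ is Lipschitz on $X$ with some constant $L$, hence every subgradient on $X$ is bounded by $L$. Finally, the differentiable special case is immediate: $\partial f(y_0,x_0)=\{\nabla f(y_0,x_0)\}$, $\partial g_i(y_0,x_0)=\{\nabla g_i(y_0,x_0)\}$, and writing the first-order optimality conditions of \eqref{vfunctionq} in $y$ identifies the multipliers $(\lambda,\mu)$ appearing in the formula for $\partial\mathcal{Q}(x_0)$ with precisely the elements of $\Lambda(x_0)$.

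The main obstacle I anticipate is the rigorous verification of the sum-rule qualification in the infinite-valued convex setting and the careful handling of the interplay between $\mathrm{ri}(Y)$, $\mathrm{ri}(\{g\leq 0\})$, and $\mathrm{dom}(f)$ to ensure that the relative interiors of the four sets being added meet. Once this is done, everything else is mechanical convex calculus; the cited result from \cite{guiguessiopt2016} can be quoted at this stage if a self-contained proof is not required.
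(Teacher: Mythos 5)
The paper does not actually prove this lemma: it defers entirely to Lemma 2.1 and Proposition 2.1 of \cite{guiguessiopt2016}, so there is no in-paper argument to compare against. Your reconstruction --- the marginal-function formula $\partial\mathcal{Q}(x_0)=\{s:(0,s)\in\partial F(y_0,x_0)\}$ followed by the Moreau--Rockafellar sum rule and the normal-cone formula for sublevel sets --- is the standard route and is, in outline, the right one for the characterization \eqref{caractsubQ} and for the differentiable case. Two qualifications you flag but should make explicit: the Slater point must also lie in $\mathrm{ri}(\mathrm{dom}\,f)$ (this is exactly where $Y\times X^{\varepsilon}\subset\mathrm{dom}(f)$ enters), and when you split the indicator of $\{g\le 0\}$ into the $p$ indicators of the individual sublevel sets, the qualification you want is a point with $g_i<0$ for every $i$, not merely membership in $\mathrm{ri}(\{g\le 0\})$.

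The genuine gap is in your argument for the boundedness of $\cup_{x\in X}\partial\mathcal{Q}(x)$. You claim $\mathcal{Q}$ is finite on $X^{\varepsilon/2}$ ``thanks to $Y\times X^{\varepsilon}\subset\mathrm{dom}(f)$ and compactness of $Y$'', but that hypothesis only controls the objective; it says nothing about the feasible set $S(x)=\{y\in Y:\ Ay+Bx=b,\ g(y,x)\le 0\}$, which may be empty for $x$ near, or even in, $X\setminus\{x_0\}$ --- the lemma assumes $S(x_0)\neq\emptyset$ only at the single point $x_0$. Where $S(x)=\emptyset$ one has $\mathcal{Q}(x)=+\infty$, the local-boundedness/Lipschitz argument collapses, and subgradients can genuinely blow up as $x$ approaches the boundary of $\mathrm{dom}(\mathcal{Q})$ (consider $\mathcal{Q}(x)=-\sqrt{1-\|x\|^{2}}$ on the unit ball, whose subgradients are unbounded on its domain). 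To close this you need a recourse-type hypothesis such as $S(x)\cap\mathrm{ri}(Y)\neq\emptyset$ for every $x$ in a fattening of $X$ --- this is precisely what assumptions (H1)-(d) and (Sto-H1)-(d) supply later in the paper, and what the cited reference assumes. With it, $\mathcal{Q}$ is finite and bounded above on $X^{\varepsilon}$ and bounded below on $X$, and the elementary estimate $\|s\|\le\big(\sup_{X^{\varepsilon}}\mathcal{Q}-\inf_{X}\mathcal{Q}\big)/\varepsilon$ for $s\in\partial\mathcal{Q}(x)$, $x\in X$, yields the claim.
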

\begin{proof}
See the proofs of Lemma 2.1 and Proposition 2.1 in \cite{guiguessiopt2016}. \hfill
\end{proof}

Let us now discuss the computation of inexact cuts for $\mathcal{Q}$ given by \eqref{vfunctionq}.
We start with the case where the argument $x$ of the value function appears only in the objective function of \eqref{vfunctionq}.

\subsection{Fixed feasible set}\label{fixedcsetcut}

As a special case of problem \eqref{vfunctionq},  let $\mathcal{Q}: X\rightarrow {\overline{\mathbb{R}}}$ be the value function given by
\begin{equation} \label{vfunction1}
\mathcal{Q}(x)=\left\{
\begin{array}{l}
\inf_{y \in \mathbb{R}^n} \;f(y, x)\\
y \in Y
\end{array}
\right.
\end{equation}
where $X, Y$ are convex, compact, and nonempty sets. We pick $\bar x \in X$ and
denote by $\bar y \in Y$ an optimal solution of \eqref{vfunction1} written for $x =\bar x$:
\begin{equation}\label{optfixedset}
\mathcal{Q}( \bar x ) = f(\bar y , \bar x).
\end{equation}
Using Lemma \ref{dervaluefunction}, if $f$ is differentiable, we have that $\nabla_x f(\bar y, \bar x) \in \partial \mathcal{Q}( \bar x )$. If 
instead of an optimal solution $\bar y$ of \eqref{vfunction1} we only have 
at hand an approximate $\varepsilon$-optimal solution $\hat y( \varepsilon )$ it is natural to replace $\nabla_x f(\bar y, \bar x)$
by $\nabla_x f(\hat  y( \varepsilon ) , \bar x)$. 
The inexact cut from Proposition \ref{fixedprop1} below will be expressed in terms of the function 
$\ell_1 :Y \small{\times} X \rightarrow \mathbb{R}_{+}$ given  by
\begin{equation}\label{defrxy}
\ell_1 (\hat y , \bar x  ) = - \min_{y \in Y} \langle \nabla_y f( \hat y ,  \bar x  ) , y - \hat y  \rangle = \max_{y \in Y} \langle \nabla_y f(    \hat y , \bar x) , \hat y - y  \rangle.
\end{equation}
\begin{prop} \label{fixedprop1} Let $\bar x \in X$ and
let $\hat y(\varepsilon) \in Y$ be an $\epsilon$-optimal solution for problem \eqref{vfunction1}
written for $x= \bar x$ with optimal value $\mathcal{Q}( \bar x )$, i.e., $\mathcal{Q}( \bar x ) \geq f( \hat y(\varepsilon) , \bar x ) - \varepsilon$.
Assume that $f$ is differentiable and convex on $Y \small{\times} X$.
Then setting  $\eta( \varepsilon )=\ell_1( \hat y(\varepsilon) , \bar x )$, the affine function
\begin{equation}\label{cutfixed1}
\mathcal{C}(x):= f ( \hat y(\varepsilon) , \bar x ) - \eta( \varepsilon )  + \langle \nabla_x f(  \hat y(\varepsilon) , \bar x ) , x - \bar x  \rangle
\end{equation}
is a cut for $\mathcal{Q}$ at $\bar x$, i.e., for every $x \in X$ we have
$\mathcal{Q}(x) \geq \mathcal{C}(x)$ and the quantity $\eta( \varepsilon )$
is an upper bound for
the 
distance 
$\mathcal{Q}( \bar x ) - \mathcal{C}( \bar x)$ between the 
values of $\mathcal{Q}$ and of the cut at $\bar x$. 
\end{prop}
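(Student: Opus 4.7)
The plan is to exploit the gradient inequality for the convex differentiable function $f$ at the point $(\hat{y}(\varepsilon), \bar{x})$, and use the definition of $\eta(\varepsilon) = \ell_1(\hat y(\varepsilon), \bar x)$ to absorb the error coming from the fact that $\hat{y}(\varepsilon)$ is only an $\varepsilon$-optimal solution rather than an exact minimizer (in particular, $\nabla_y f(\hat y(\varepsilon), \bar x)$ need not satisfy the usual first-order optimality condition over $Y$).

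First I would fix an arbitrary $x \in X$ and an arbitrary $y \in Y$. By convexity and differentiability of $f$ on $Y \times X$, the standard gradient inequality gives
\begin{equation*}
f(y,x) \geq f(\hat y(\varepsilon), \bar x) + \langle \nabla_y f(\hat y(\varepsilon), \bar x), y - \hat y(\varepsilon)\rangle + \langle \nabla_x f(\hat y(\varepsilon), \bar x), x - \bar x\rangle.
\end{equation*}
The key step is to lower bound the $y$-component. By definition of $\ell_1$ in \eqref{defrxy},
\begin{equation*}
\langle \nabla_y f(\hat y(\varepsilon), \bar x), y - \hat y(\varepsilon)\rangle = -\langle \nabla_y f(\hat y(\varepsilon), \bar x), \hat y(\varepsilon) - y\rangle \geq -\ell_1(\hat y(\varepsilon), \bar x) = -\eta(\varepsilon),
\end{equation*}
so $f(y,x) \geq f(\hat y(\varepsilon), \bar x) - \eta(\varepsilon) + \langle \nabla_x f(\hat y(\varepsilon), \bar x), x - \bar x\rangle = \mathcal{C}(x)$. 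Taking the infimum over $y \in Y$ yields $\mathcal{Q}(x) \geq \mathcal{C}(x)$, which is the cut inequality.

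For the distance bound at $\bar x$, I would simply evaluate: $\mathcal{C}(\bar x) = f(\hat y(\varepsilon), \bar x) - \eta(\varepsilon)$, while feasibility of $\hat y(\varepsilon)$ in $Y$ gives $\mathcal{Q}(\bar x) \leq f(\hat y(\varepsilon), \bar x)$. Therefore
\begin{equation*}
\mathcal{Q}(\bar x) - \mathcal{C}(\bar x) \leq f(\hat y(\varepsilon), \bar x) - f(\hat y(\varepsilon), \bar x) + \eta(\varepsilon) = \eta(\varepsilon),
\end{equation*}
as claimed. (Note that the $\varepsilon$-optimality hypothesis is not even needed for the two inequalities; it only becomes relevant for the companion result that $\eta(\varepsilon) \to 0$ as $\varepsilon \to 0$, treated later in Propositions \ref{fixedprop1b} and \ref{fixedprop2}.)

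There is no real obstacle here; the proposition is essentially a one-line consequence of the gradient inequality, with $\eta(\varepsilon)$ designed precisely to compensate for the fact that one cannot assert $\langle \nabla_y f(\hat y(\varepsilon), \bar x), y - \hat y(\varepsilon)\rangle \geq 0$ (which would hold if $\hat y(\varepsilon)$ were exactly optimal). The only point to be careful about is that the gradient inequality requires $f$ to be convex \emph{jointly} in $(y,x)$, which is explicitly part of the hypotheses.
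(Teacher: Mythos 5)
Your proof is correct and follows essentially the same route as the paper: the gradient inequality for the jointly convex $f$ at $(\hat y(\varepsilon),\bar x)$, minimization over $y\in Y$ to produce the $-\ell_1$ term, and feasibility of $\hat y(\varepsilon)$ to bound $\mathcal{Q}(\bar x)-\mathcal{C}(\bar x)$. Your parenthetical observation that the $\varepsilon$-optimality hypothesis is not actually used in either inequality is accurate and consistent with the paper's argument.
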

\begin{proof}
For every $(x,y) \in X \small{\times} Y$ using the convexity of $f$ we have
$$
\begin{array}{lll}
f(y, x )  & \geq & f ( \hat y(\varepsilon) , \bar x ) +   \langle \nabla_x f(  \hat y(\varepsilon) , \bar x ) , x - \bar x  \rangle + \langle \nabla_y f(  \hat y(\varepsilon) , \bar x ) , y - \hat y(\varepsilon)  \rangle.   
\end{array}
$$
Minimizing over $y$ in $Y$ on each side of the above inequality we get for every $x \in X$
\begin{equation}\label{subgradfisrtcase0}
\mathcal{Q}( x )  \geq  \mathcal{C}(x) = f ( \hat y(\varepsilon) , \bar x  ) - \ell_1( \hat y(\varepsilon) , \bar x ) + \langle \nabla_x f(  \hat y(\varepsilon) , \bar x  ) , x - \bar x  \rangle
\end{equation}
which shows that $\mathcal{C}$ is a valid cut for $\mathcal{Q}$. 
Finally, since $\hat y(\varepsilon) \in Y$, we have $f ( \hat y(\varepsilon) , \bar x )  \geq \mathcal{Q}( \bar x )$ and
\begin{equation}\label{inexcut1secstep0}
\mathcal{C}(\bar x) - \mathcal{Q}( \bar x ) =f ( \hat y(\varepsilon) , \bar x ) - \ell_1( \hat y(\varepsilon) , \bar x )  - \mathcal{Q}( \bar x ) \geq -\ell_1( \hat y(\varepsilon) , \bar x ).  
\end{equation}\hfill
\end{proof}
We now refine the bound $\ell_1( \hat y(\varepsilon) , \bar x )$ on $\mathcal{Q}( \bar x ) - \mathcal{C}( \bar x)$ given by Proposition \ref{fixedprop1}
making the following assumption:  
\begin{itemize}
\item[(H3)] $f$ is differentiable on $Y \small{\times} X$ and there exists $M_1>0$ such that for every $x \in X, y_1, y_2 \in Y$, we have
$$
\|\nabla_y f(y_2,x) -  \nabla_y f(y_1, x)  \|  \leq  M_1 \|y_2   - y_1\|. 
$$
\end{itemize}
\begin{prop} \label{fixedprop1b} Let $\bar x \in X$ and
let $\hat y(\varepsilon) \in Y$ be an $\epsilon$-optimal solution for problem \eqref{vfunction1}
written for $x= \bar x$ with optimal value $\mathcal{Q}( \bar x )$, i.e., $\mathcal{Q}( \bar x ) \geq f( \hat y(\varepsilon) , \bar x ) - \varepsilon$.
Then setting  $\eta( \varepsilon )=\ell_1( \hat y(\varepsilon) , \bar x )$, if $f$ is differentiable and convex on $Y \small{\times} X$
the affine function
$\mathcal{C}(x)$ given by \eqref{cutfixed1}
is a cut for $\mathcal{Q}$ at $\bar x$.
Moreover, if Assumption (H3) holds, then
setting
\begin{equation}\label{defepsilon00}
\varepsilon_0 =
\left\{ 
\begin{array}{ll}
\frac{ \ell_1( \hat y(\varepsilon) , \bar x )  }{2 M_1 \emph{Diam}(Y)^2 }(  2 M_1 \emph{Diam}(Y)^2  - \ell_1( \hat y(\varepsilon) , \bar x ) ) & \mbox{if }\ell_1( \hat y(\varepsilon) , \bar x )  \leq M_1 \emph{Diam}(Y)^2,\\
\frac{1}{2} \ell_1( \hat y(\varepsilon) , \bar x ) & \mbox{otherwise,}
\end{array}
\right.
\end{equation}
the distance 
$\mathcal{Q}( \bar x ) - \mathcal{C}( \bar x)$ between the 
values of $\mathcal{Q}$ and of the cut at $\bar x$ is at most
$\varepsilon_0$.
\end{prop}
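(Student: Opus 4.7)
The first claim (that $\mathcal{C}$ defined by \eqref{cutfixed1} is a valid cut for $\mathcal{Q}$ at $\bar x$) is exactly the statement of Proposition \ref{fixedprop1}, so it can be invoked directly. The only new work is to improve, under assumption (H3), the upper bound on $\mathcal{Q}(\bar x) - \mathcal{C}(\bar x)$ from $\ell_1(\hat y(\varepsilon), \bar x)$ to $\varepsilon_0$.

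The key idea is that the bound $\ell_1$ comes from using $\mathcal{Q}(\bar x) \leq f(\hat y(\varepsilon), \bar x)$, which is wasteful because $\hat y(\varepsilon)$ is not a minimizer and we actually know a descent direction from $\hat y(\varepsilon)$. Let me choose
\[
y^\dagger \in \arg\min_{y \in Y} \langle \nabla_y f(\hat y(\varepsilon), \bar x), y - \hat y(\varepsilon)\rangle,
\]
so that $\langle \nabla_y f(\hat y(\varepsilon), \bar x), y^\dagger - \hat y(\varepsilon)\rangle = -\ell_1(\hat y(\varepsilon), \bar x)$ by \eqref{defrxy}. For any $t \in [0,1]$, convexity of $Y$ gives $y_t := \hat y(\varepsilon) + t(y^\dagger - \hat y(\varepsilon)) \in Y$, and the descent lemma (an immediate consequence of (H3)) yields
\[
f(y_t, \bar x) \leq f(\hat y(\varepsilon), \bar x) - t\,\ell_1(\hat y(\varepsilon), \bar x) + \tfrac{M_1 t^2}{2}\|y^\dagger - \hat y(\varepsilon)\|^2.
\]
Since $y_t \in Y$, we have $\mathcal{Q}(\bar x) \leq f(y_t, \bar x)$, and bounding $\|y^\dagger - \hat y(\varepsilon)\|^2 \leq \mathrm{Diam}(Y)^2$ gives
\[
\mathcal{Q}(\bar x) - f(\hat y(\varepsilon), \bar x) \leq -t\,\ell_1(\hat y(\varepsilon), \bar x) + \tfrac{M_1 t^2}{2}\mathrm{Diam}(Y)^2
\]
for every $t \in [0,1]$.

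Next I would minimize the right-hand side over $t \in [0,1]$. The unconstrained minimizer is $t^\star = \ell_1(\hat y(\varepsilon), \bar x)/(M_1\mathrm{Diam}(Y)^2)$. If $\ell_1(\hat y(\varepsilon), \bar x) \leq M_1 \mathrm{Diam}(Y)^2$, this $t^\star$ lies in $[0,1]$ and the minimum value is $-\ell_1(\hat y(\varepsilon), \bar x)^2/(2 M_1 \mathrm{Diam}(Y)^2)$. Otherwise, the minimum on $[0,1]$ is attained at $t = 1$, giving value $-\ell_1(\hat y(\varepsilon), \bar x) + M_1 \mathrm{Diam}(Y)^2/2$, which is strictly below $-\ell_1(\hat y(\varepsilon), \bar x)/2$ because $M_1 \mathrm{Diam}(Y)^2 < \ell_1(\hat y(\varepsilon), \bar x)$ in that regime. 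Combining with the identity
\[
\mathcal{Q}(\bar x) - \mathcal{C}(\bar x) = \bigl(\mathcal{Q}(\bar x) - f(\hat y(\varepsilon), \bar x)\bigr) + \ell_1(\hat y(\varepsilon), \bar x),
\]
the two cases yield exactly the piecewise bound $\varepsilon_0$ in \eqref{defepsilon00} (the second case being slightly loosened from $M_1 \mathrm{Diam}(Y)^2/2$ to $\ell_1(\hat y(\varepsilon), \bar x)/2$ so that the two branches agree at the threshold $\ell_1 = M_1\mathrm{Diam}(Y)^2$).

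The whole argument is essentially a quadratic-upper-bound/descent-lemma calculation, so I do not expect a serious obstacle. The only point requiring slight care is that the bound on $\mathcal{Q}(\bar x) - \mathcal{C}(\bar x)$ does not explicitly involve the optimality gap $\varepsilon$ at all: the improvement over the crude bound $\ell_1$ comes purely from the Lipschitz-gradient structure (H3), which lets us exhibit a feasible point strictly better than $\hat y(\varepsilon)$ and thereby tighten the estimate $\mathcal{Q}(\bar x) \leq f(\hat y(\varepsilon), \bar x)$ used in the proof of Proposition \ref{fixedprop1}.
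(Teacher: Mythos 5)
Your proposal is correct and follows essentially the same route as the paper's proof: you select the same minimizer $y^\dagger$ (the paper's $y_*$) achieving $\ell_1(\hat y(\varepsilon),\bar x)$, apply the same descent-lemma estimate from (H3) along the segment toward it, optimize the resulting quadratic over $t\in[0,1]$ with the same two-case split at $\ell_1 = M_1\mathrm{Diam}(Y)^2$, and combine with the identity $\mathcal{Q}(\bar x)-\mathcal{C}(\bar x) = \mathcal{Q}(\bar x)-f(\hat y(\varepsilon),\bar x)+\ell_1(\hat y(\varepsilon),\bar x)$ exactly as the paper does. Your closing observation that the refinement comes purely from (H3) and not from the value of $\varepsilon$ is also accurate.
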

\begin{proof} We already know from Proposition \ref{fixedprop1} that $\mathcal{C}$ is an inexact cut for $\mathcal{Q}$.
It remains to show that if Assumption (H3) holds then
\begin{equation}\label{inexcut1secstep0}
\mathcal{C}(\bar x) - \mathcal{Q}( \bar x ) =f ( \hat y(\varepsilon) , \bar x ) - \ell_1( \hat y(\varepsilon) , \bar x )  - \mathcal{Q}( \bar x ) \geq - \varepsilon_0.  
\end{equation}
Let $y_* \in Y$ be such that
$$
\ell_1( \hat y(\varepsilon) , \bar x ) = \langle \nabla_y f(  \hat y(\varepsilon) , \bar x ) , \hat y(\varepsilon) - y_*  \rangle.
$$
Using (H3), for every $0 \leq t \leq 1$, we have
$$
\begin{array}{lll}
f( \hat y(\varepsilon)+ t (  y_*  - \hat y(\varepsilon)) , \bar x      )
& \leq & f ( \hat y(\varepsilon) , \bar x ) +  t \langle y_* - \hat y(\varepsilon) , \nabla_y f( \hat y(\varepsilon) , \bar x )\rangle +  \frac{1}{2} M_1 t^2 \| \hat y(\varepsilon) - y_*  \|^2 \\
& \leq & f ( \hat y(\varepsilon) , \bar x ) -  t \ell_1( \hat y(\varepsilon) , \bar x )  +  \frac{1}{2} M_1 t^2 \| \hat y(\varepsilon) - y_*  \|^2.
\end{array}
$$
By convexity of $Y$, since $\hat y(\varepsilon), y_* \in Y$, for every $0 \leq t \leq 1$ we have that $\hat y(\varepsilon) + t (  y_*  - \hat y(\varepsilon)    ) \in Y$ 
and the above relation yields
$$  
\mathcal{Q}( \bar x ) \leq  f ( \hat y(\varepsilon) , \bar x ) - \max_{0 \leq t \leq 1} \Big[ t \ell_1( \hat y(\varepsilon) , \bar x  ) - \frac{1}{2} M_1 \mbox{Diam}(Y)^2 t^2 \Big].
$$
If $\ell_1( \hat y(\varepsilon) , \bar x ) \leq M_1 \mbox{Diam}(Y)^2$ then 
$\max_{0 \leq t \leq 1} \Big[ t \ell_1( \hat y(\varepsilon) , \bar x ) - \frac{1}{2} M_1 \mbox{Diam}(Y)^2 t^2 \Big]=\frac{1}{2} \frac{ \ell_1( \hat y(\varepsilon) , \bar x )^2  }{M_1 \mbox{Diam}(Y)^2}$
and
\begin{equation}\label{inexcut1secstep20}
\mathcal{Q}( \bar x ) \leq f ( \hat y(\varepsilon) , \bar x ) - \frac{1}{2} \frac{ \ell_1( \hat y(\varepsilon) , \bar x )^2  }{M_1 \mbox{Diam}(Y)^2}.
\end{equation}
If $\ell_1( \hat y(\varepsilon) , \bar x ) \geq M_1 \mbox{Diam}(Y)^2$ then 
$\max_{0 \leq t \leq 1} \Big[ t \ell_1( \hat y(\varepsilon) , \bar x ) - \frac{1}{2} M_1 \mbox{Diam}(Y)^2 t^2 \Big]=\ell_1( \hat y(\varepsilon) , \bar x ) - \frac{1}{2} M_1 \mbox{Diam}(Y)^2  $
and
\begin{equation}\label{inexcut1secstep30}
\mathcal{Q}( \bar x ) \leq f ( \hat y(\varepsilon) , \bar x ) - \frac{1}{2} \ell_1( \hat y(\varepsilon) , \bar x ).
\end{equation}
Combining \eqref{inexcut1secstep20} and \eqref{inexcut1secstep30} with \eqref{defepsilon00} gives \eqref{inexcut1secstep0} and completes the proof.\hfill
\end{proof}
\begin{rem} As expected, if $\varepsilon=0$ then $\hat y( \varepsilon )$ is an optimal solution of problem \eqref{vfunction1}
written for $x= \bar x$ and the first order optimality conditions ensure
that $\ell_1(\hat y( \varepsilon )  ,  \bar x  )=0$, meaning that 
the cut given by Proposition \ref{fixedprop1} is exact. Otherwise it is inexact.
Since $\ell_1(\hat y( \varepsilon )  ,  \bar x  ) \geq 0$ we also observe that $ \varepsilon_0$ given 
in Proposition \ref{fixedprop1b} is nonnegative and smaller than $\ell_1(\hat y( \varepsilon )  ,  \bar x  )$, which shows
that Proposition \ref{fixedprop1b} improves the bound from Proposition \ref{fixedprop1} for $\mathcal{Q}( \bar x ) - \mathcal{C}( \bar x)$.
\end{rem} 
In Propositions \ref{fixedprop1} and \ref{fixedprop1b}, if the optimization problem 
$\max_{y \in Y} \langle \nabla_y f(\hat y( \varepsilon ) ,  \bar x  ) , \hat y( \varepsilon ) - y  \rangle$
with optimal value
$\ell_1 ( \hat y( \varepsilon )  ,  \bar x  )$ 
is solved approximately, we obtain the cuts given by Proposition \ref{fixedprop2}.
\begin{prop}\label{fixedprop2} Let $\bar x \in X$ and
let $\hat y(\varepsilon_1) \in Y$ be an $\epsilon_1$-optimal solution for problem \eqref{vfunction1}
written for $x= \bar x$ with optimal value $\mathcal{Q}( \bar x )$, i.e., $\mathcal{Q}( \bar x ) \geq f(\hat y(\varepsilon_1) , \bar x  ) - \varepsilon_1$.
Let also $\tilde y(\hat y(\varepsilon_1) , \bar x  ) \in Y$ be an approximate $\epsilon_2$-optimal solution for the problem 
$\max_{y \in Y} \langle \nabla_y f( \hat y(\varepsilon_1)  , \bar x  ) , \hat y(\varepsilon_1) - y  \rangle$
with optimal value $\ell_1(\hat y(\varepsilon_1)  , \bar x  )$, i.e.,
$\ell_1(\hat y(\varepsilon_1) , \bar x  ) - \varepsilon_2 \leq \langle \nabla_y f( \hat y(\varepsilon_1) , \bar x  ) , \hat y(\varepsilon_1) - \tilde y(\hat y(\varepsilon_1) , \bar x  )  \rangle.$
Assume that $f$ is convex and differentiable on $Y \small{\times} X$.
Then setting $\eta( \varepsilon_1, \varepsilon_2 ) = \varepsilon_2 - \langle \tilde y( \hat y(\varepsilon_1) ,  \bar x  ) - \hat y(\varepsilon_1) , \nabla_y f( \hat y(\varepsilon_1) , \bar x  )\rangle$
and $\hat{\ell_1}( \hat y(\varepsilon_1) , \bar x  )=\langle  \hat y(\varepsilon_1)  - \tilde y( \hat y(\varepsilon_1) , \bar x  )  , \nabla_y f( \hat y(\varepsilon_1) , \bar x  )\rangle$,
the affine function
$$
\mathcal{C}(x):= f (\hat y(\varepsilon_1) ,  \bar x   ) - \eta(\epsilon_1 , \epsilon_2) + \langle \nabla_x f(\hat y(\varepsilon_1)  ,   \bar x  ) , x - \bar x  \rangle
$$
is a cut for $\mathcal{Q}$ at $\bar x$, i.e., for every $x \in X$ we have
$\mathcal{Q}(x) \geq \mathcal{C}(x)$ and the distance 
$\mathcal{Q}( \bar x ) - \mathcal{C}( \bar x)$ between the 
values of $\mathcal{Q}$ and of the cut at $\bar x$ is at most $\varepsilon_2 + \hat{\ell}_1 (\hat y(\varepsilon_1) , \bar x)$.
Moreover, if Assumption (H3) holds, setting
\begin{equation}\label{defepsilon0}
\varepsilon_0 =
\left\{
\begin{array}{ll}
\varepsilon_2 + {\hat{\ell_1}}(\hat y(\varepsilon_1) , \bar x) & \mbox{if }\hat{\ell_1}(\hat y(\varepsilon_1) , \bar x) \leq 0,\\
\varepsilon_2 + \frac{  {\hat{\ell_1}}(\hat y(\varepsilon_1) , \bar x) }{ 2 M_1 \emph{Diam}(Y)^2 }(  2 M_1 \emph{Diam}(Y)^2  -  {\hat{\ell_1}}(\hat y(\varepsilon_1) , \bar x) ) & \mbox{if }0<\hat{\ell_1}(\hat y(\varepsilon_1) , \bar x) \leq M_1 \emph{Diam}(Y)^2,\\
\varepsilon_2+\frac{1}{2} {\hat{\ell_1}}(\hat y(\varepsilon_1) , \bar x) & \mbox{otherwise,}
\end{array}
\right.
\end{equation}
the distance 
$\mathcal{Q}( \bar x ) - \mathcal{C}( \bar x)$ between the 
values of $\mathcal{Q}$ and of the cut at $\bar x$ is at most $\varepsilon_0$.
\end{prop}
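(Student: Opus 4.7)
The plan is to follow essentially the same script as Proposition \ref{fixedprop1} and its refinement in Proposition \ref{fixedprop1b}, with the exact maximizer $y_\ast$ of the inner linear program defining $\ell_1(\hat y(\varepsilon_1), \bar x)$ replaced throughout by its $\varepsilon_2$-approximate counterpart $\tilde y(\hat y(\varepsilon_1), \bar x)$. The quantity $\ell_1$ is then replaced by $\hat{\ell}_1$ plus a bookkeeping term of size $\varepsilon_2$ inherited from the approximation, which is precisely what $\eta(\varepsilon_1,\varepsilon_2)$ encodes.

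To establish the cut inequality, write $\hat y = \hat y(\varepsilon_1)$ and $\tilde y = \tilde y(\hat y(\varepsilon_1), \bar x)$ for brevity. Convexity and differentiability of $f$ give, for all $(x,y)\in X\times Y$,
$f(y,x) \geq f(\hat y, \bar x) + \langle \nabla_x f(\hat y, \bar x), x - \bar x \rangle + \langle \nabla_y f(\hat y, \bar x), y - \hat y\rangle.$
Minimizing over $y\in Y$ on each side and using $\min_{y\in Y}\langle \nabla_y f(\hat y, \bar x), y - \hat y\rangle = -\ell_1(\hat y, \bar x)$, the $\varepsilon_2$-optimality hypothesis yields $\ell_1(\hat y, \bar x) \leq \hat{\ell}_1(\hat y, \bar x) + \varepsilon_2$, hence $\mathcal{Q}(x) \geq f(\hat y, \bar x) + \langle \nabla_x f(\hat y, \bar x), x - \bar x\rangle - \eta(\varepsilon_1,\varepsilon_2) = \mathcal{C}(x)$ for every $x\in X$. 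Evaluating $\mathcal{C}$ at $\bar x$ and combining with $\mathcal{Q}(\bar x) \leq f(\hat y, \bar x)$ (valid since $\hat y \in Y$) gives the basic bound $\mathcal{Q}(\bar x) - \mathcal{C}(\bar x) \leq \eta = \varepsilon_2 + \hat{\ell}_1$, which incidentally shows $\varepsilon_2 + \hat{\ell}_1 \geq 0$ automatically.

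The refinement under (H3) mirrors Proposition \ref{fixedprop1b}. Along the segment joining $\hat y$ and $\tilde y$, which lies in the convex set $Y$, Lipschitz continuity of $\nabla_y f(\cdot, \bar x)$ gives, for every $t \in [0,1]$,
$f(\hat y + t(\tilde y - \hat y), \bar x) \leq f(\hat y, \bar x) - t\,\hat{\ell}_1 + \tfrac{1}{2} M_1 \Diam(Y)^2 t^2,$
and the left-hand side is an upper bound on $\mathcal{Q}(\bar x)$ since the argument stays in $Y$. Maximizing $t\,\hat{\ell}_1 - \tfrac{1}{2} M_1 \Diam(Y)^2 t^2$ over $t\in [0,1]$ and subtracting $\mathcal{C}(\bar x) = f(\hat y,\bar x) - \varepsilon_2 - \hat{\ell}_1$ delivers the three cases of \eqref{defepsilon0}.

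The one subtlety, and the place where I expect the main care to be needed, is the case split: unlike $\ell_1 \geq 0$, the approximate quantity $\hat{\ell}_1$ can be negative, which is what forces three cases rather than two. When $\hat{\ell}_1 \leq 0$, the quadratic in $t$ is maximized at $t=0$ and no improvement over the basic bound $\varepsilon_2 + \hat{\ell}_1$ is obtained, yielding the first line of \eqref{defepsilon0}. When $0 < \hat{\ell}_1 \leq M_1 \Diam(Y)^2$, the unconstrained optimum $t^\ast = \hat{\ell}_1 / (M_1 \Diam(Y)^2)$ lies in $(0,1]$ and supplies the quadratic improvement producing the middle line. When $\hat{\ell}_1 > M_1 \Diam(Y)^2$, the maximum is attained at $t^\ast = 1$ with tight value $\tfrac{1}{2} M_1 \Diam(Y)^2$; weakening this to $\tfrac{1}{2} \hat{\ell}_1$ (valid since $\hat{\ell}_1 > M_1 \Diam(Y)^2$) matches the form of \eqref{defepsilon0} and parallels the corresponding step in Proposition \ref{fixedprop1b}. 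No new idea beyond those two propositions is required; the only labor is tracking the $\varepsilon_2$-error through the first minimization and handling the possibly negative $\hat{\ell}_1$ in the case split.
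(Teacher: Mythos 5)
Your proposal is correct and follows essentially the same route as the paper: the cut inequality via convexity plus the bound $\ell_1(\hat y(\varepsilon_1),\bar x)\le \hat{\ell}_1(\hat y(\varepsilon_1),\bar x)+\varepsilon_2$, the basic estimate $\mathcal{Q}(\bar x)-\mathcal{C}(\bar x)\le \varepsilon_2+\hat{\ell}_1$, and under (H3) the descent inequality along the segment from $\hat y(\varepsilon_1)$ to $\tilde y$ with the same three-way split on the sign and size of $\hat{\ell}_1$. One harmless slip in your narration: in the third case the maximum of $t\hat{\ell}_1-\tfrac12 M_1\mbox{Diam}(Y)^2t^2$ at $t=1$ equals $\hat{\ell}_1-\tfrac12 M_1\mbox{Diam}(Y)^2$ rather than $\tfrac12 M_1\mbox{Diam}(Y)^2$, but your subsequent weakening of this value to $\tfrac12\hat{\ell}_1$ is exactly the step the paper takes, so the argument is unaffected.
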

\begin{proof} We will use the short notation $\hat y$ for  $\hat y(\varepsilon_1)$, $\tilde y$ for $\tilde y(\hat y(\varepsilon_1) , \bar x  )$, and
${\hat{\ell_1}}$ for ${\hat{\ell_1}}(\hat y(\varepsilon_1) , \bar x)$.
Proceeding as in the proof of Proposition \ref{fixedprop1}, we get for every $x \in X$
\begin{equation}\label{subgradfisrtcase}
\begin{array}{lll}
\mathcal{Q}( x ) & \geq & f ( \hat y , \bar x  ) - \ell_1( \hat y , \bar x ) + \langle \nabla_x f(  \hat y , \bar x  ) , x - \bar x  \rangle \\
&   \geq &   \mathcal{C}(x) = f ( \hat y , \bar x  ) + \langle \tilde y - \hat y , \nabla_y f( \hat y , \bar x )\rangle - \varepsilon_2 + \langle \nabla_x f(  \hat y , \bar x  ) , x - \bar x  \rangle
\end{array}
\end{equation}
which shows that $\mathcal{C}$ is a valid cut for $\mathcal{Q}$.
Now observe that 
$$
\mathcal{C}( \bar x )- \mathcal{Q}( \bar x ) = f ( \hat y , \bar x  ) + \langle \tilde y - \hat y , \nabla_y f( \hat y , \bar x )\rangle - \varepsilon_2 - \mathcal{Q}( \bar x ) \geq -\varepsilon_2 - \hat{\ell_1}.  
$$
It remains to show that if Assumption (H3) holds then
\begin{equation}\label{inexcut1secstep}
f ( \hat y , \bar x  ) + \langle \tilde y - \hat y , \nabla_y f( \hat y , \bar x )\rangle - \varepsilon_2 - \mathcal{Q}( \bar x ) \geq - \varepsilon_0.  
\end{equation}
Using assumption (H3) we have for every $0 \leq t \leq 1$,
$$
\begin{array}{lll}
f( \hat y + t (  \tilde y  - \hat y ) , \bar x    )
& \leq & f (\hat y ,  \bar x  ) +  t \langle \tilde y - \hat y , \nabla_y f( \hat y , \bar x )\rangle +  \frac{1}{2} M_1 t^2 \| \tilde y - \hat y  \|^2.
\end{array}
$$
This yields
$$
\mathcal{Q}( \bar x ) \leq  f ( \hat y , \bar x ) + \min_{0 \leq t \leq 1} \Big[ -t \hat{\ell_1} + \frac{1}{2} M_1 \mbox{Diam}(Y)^2 t^2 \Big].
$$
Three cases are possible: $\hat{\ell_1} \leq 0$ (Case A), $0 <\hat{\ell_1} \leq M_1 \mbox{Diam}(Y)^2$  (Case B),
$\hat{\ell_1} > M_1 \mbox{Diam}(Y)^2$  (Case C).
\par Case A. We have   
$$
f ( \hat y , \bar x  ) + \langle \tilde y - \hat y , \nabla_y f( \hat y , \bar x )\rangle - \varepsilon_2 - \mathcal{Q}( \bar x )  \geq -\hat{\ell_1} - \varepsilon_2  = - \varepsilon_0
$$
and \eqref{inexcut1secstep} holds.
\par Case B. We have 
$\min_{0 \leq t \leq 1} \Big[ -t \hat{\ell_1} + \frac{1}{2} M_1 \mbox{Diam}(Y)^2 t^2 \Big]=-\frac{1}{2} \frac{\hat{\ell_1}^2}{M_1 \mbox{Diam}(Y)^2}$
and
\begin{equation}\label{inexcut1secstep2}
\mathcal{Q}( \bar x ) \leq f ( \hat y , \bar x ) - \frac{1}{2} \frac{ \hat{\ell_1}^2  }{M_1 \mbox{Diam}(Y)^2}.
\end{equation}
\par Case C. We have 
$\min_{0 \leq t \leq 1} \Big[ -t \hat{\ell_1} + \frac{1}{2} M_1 \mbox{Diam}(Y)^2 t^2 \Big]=
-{\hat{\ell_1}} + \frac{1}{2} M_1 \mbox{Diam}(Y)^2 \leq -\frac{1}{2}{\hat{\ell_1}}$
which gives
\begin{equation}\label{inexcut1secstep3}
\mathcal{Q}( \bar x ) \leq f ( \hat y , \bar x ) - \frac{1}{2} {\hat \ell}_1.
\end{equation}
Combining \eqref{inexcut1secstep2} and \eqref{inexcut1secstep3} with \eqref{defepsilon0} gives \eqref{inexcut1secstep} for Cases B-C and completes the proof.\hfill
\end{proof}
\begin{rem} If $\varepsilon_1 = \varepsilon_2=0$
then $\hat y$ is an optimal solution of problem \eqref{vfunction1}
written for $x= \bar x$ and 
$\varepsilon_0=\varepsilon_1=\varepsilon_2= \ell_1( \hat y , \bar x )= \hat{\ell_1}(\hat y(\varepsilon_1) , \bar x )  = 0$, meaning that 
the cut given by Proposition \ref{fixedprop2} is exact. 
Also if $\varepsilon_2=0$ then $\hat \ell_1(\hat y(\varepsilon_1) , \bar x ) = \ell_1(\hat y(\varepsilon_1) , \bar x  ) \geq 0$.
Therefore when $\varepsilon_2=0$ and
$0<\hat{\ell_1}(\hat y(\varepsilon_1) , \bar x ) \leq M_1 \emph{Diam}(Y)^2$ or $\hat{\ell_1}(\hat y(\varepsilon_1) , \bar x ) > M_1 \emph{Diam}(Y)^2$
the inexact cuts from Proposition \ref{fixedprop2} correspond to the inexact cuts given in Proposition \ref{fixedprop1b}.
For the case $\hat \ell_1(\hat y(\varepsilon_1) , \bar x )  \leq 0$ in Proposition \ref{fixedprop2},
if $\varepsilon_2=0$ we get $\hat \ell_1 (\hat y(\varepsilon_1) , \bar x ) =0$ which implies $\eta(\varepsilon_1, \varepsilon_2)=0$ and the cut is exact, which is
in accordance with $\varepsilon_0=\varepsilon_2=0$. 
\end{rem}

\subsection{Variable feasible set} \label{compinexcutvarset}

Let us now discuss the computation of inexact cuts for $\mathcal{Q}$ given by \eqref{vfunctionq}.
For $x \in X$, let us introduce for problem \eqref{vfunctionq} the Lagrangian
function 
$$
L_{x}(y, \lambda, \mu)=f(y,x) + \lambda^T (Bx+Ay-b) + \mu^T g(y,x)
$$ and the function
$\ell_2 :Y \small{\times} X  \small{\times}  \mathbb{R}^q      \small{\times} \mathbb{R}_{+}^p \rightarrow \mathbb{R}_{+}$ given by
\begin{equation}\label{defrxy}
\ell_2 (\hat y , \bar x , \hat \lambda , \hat \mu ) = - \min_{y \in Y} \langle \nabla_y L_{\bar x} ( \hat y , \hat \lambda , \hat \mu ) , y - \hat y  \rangle = \max_{y \in Y} 
\langle  \nabla_y L_{\bar x} ( \hat y , \hat \lambda , \hat \mu ) , \hat y - y  \rangle.
\end{equation}
With this notation the dual function \eqref{dualfunctionx} for problem \eqref{vfunctionq} can be written
$$
\theta_{x}(\lambda, \mu)=\displaystyle \inf_{y \in Y} \;L_{x}(y, \lambda, \mu).
$$
We make the following assumption which ensures no duality gap for \eqref{vfunctionq} for any $x \in X$:
\begin{itemize}
\item[(H4)] for every $x \in X$ there exists $y_x \in \mbox{ri}(Y)$ such that
$Bx+Ay_x=b$ and $g(y_x , x)< 0$.
\end{itemize}
The following proposition provides an inexact cut for $\mathcal{Q}$ given by \eqref{vfunctionq}:
\begin{prop} \label{varprop1} Let $\bar x \in X$,
let $\hat y(\epsilon)$ be an $\epsilon$-optimal feasible primal solution for problem \eqref{vfunctionq}
written for $x= \bar x$ 
and let $(\hat \lambda(\epsilon), \hat \mu(\epsilon))$ be an $\epsilon$-optimal feasible solution of the
corresponding dual problem, i.e., of problem
\eqref{dualpb} written for $x=\bar x$.
Let Assumptions (H1), (H2), and (H4) hold. If additionally $f$ and $g$ are differentiable on $Y \small{\times} X$ then
setting $\eta(\varepsilon)=\ell_2 (\hat y(\epsilon) , \bar x  , \hat \lambda(\epsilon) , \hat \mu(\epsilon) )$, the affine function
\begin{equation}\label{cutvarprop1}
\mathcal{C}(x):= L_{\bar x} ( \hat y(\epsilon), {\hat \lambda}(\epsilon), \hat \mu(\epsilon) )- \eta(\varepsilon)  + 
\langle \nabla_x L_{\bar x} ( \hat y(\epsilon), {\hat \lambda}(\epsilon), \hat \mu(\epsilon) ) , x - \bar x \rangle
\end{equation}
is a cut for $\mathcal{Q}$ at $\bar x$ and the distance 
$\mathcal{Q}( \bar x ) - \mathcal{C}( \bar x)$ between the 
values of $\mathcal{Q}$ and of the cut at $\bar x$ is at most
$\varepsilon + \ell_2 (\hat y(\epsilon),  \bar x  , \hat \lambda(\epsilon) , \hat \mu(\epsilon) )$.
\end{prop}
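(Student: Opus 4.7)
The proof will follow closely the template of Proposition \ref{fixedprop1}, but now the Lagrangian (rather than $f$ alone) plays the role of the convex majorant, and weak/strong duality is used to relate it back to $\mathcal{Q}$. I would split the argument into two steps.

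\textbf{Step 1 (the cut is a valid lower bound).} For fixed dual variables $\hat\lambda=\hat\lambda(\varepsilon)$ and $\hat\mu=\hat\mu(\varepsilon)\geq 0$, the Lagrangian
$$L_{x}(y,\hat\lambda,\hat\mu)=f(y,x)+\hat\lambda^{T}(Ay+Bx-b)+\hat\mu^{T}g(y,x)$$
is jointly convex in $(y,x)$ on $Y\small{\times} X$: indeed $f$ and each $g_i$ are convex by (H1)--(H2), $\hat\mu_i\geq 0$, and the linear constraint term is affine. Applying the gradient inequality at the point $(\hat y(\varepsilon),\bar x)$ (where $L_{\bar x}$ is differentiable because $f$ and $g$ are), I obtain, for every $(y,x)\in Y\small{\times} X$,
$$L_{x}(y,\hat\lambda,\hat\mu)\;\geq\; L_{\bar x}(\hat y(\varepsilon),\hat\lambda,\hat\mu)+\langle \nabla_y L_{\bar x}(\hat y(\varepsilon),\hat\lambda,\hat\mu),\,y-\hat y(\varepsilon)\rangle+\langle \nabla_x L_{\bar x}(\hat y(\varepsilon),\hat\lambda,\hat\mu),\,x-\bar x\rangle.$$
Minimizing both sides over $y\in Y$ and using the definition \eqref{defrxy} of $\ell_2$, the right-hand side becomes precisely $\mathcal{C}(x)$, while the left-hand side equals $\theta_{x}(\hat\lambda,\hat\mu)$. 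Weak duality gives $\mathcal{Q}(x)\geq \theta_{x}(\hat\lambda,\hat\mu)\geq \mathcal{C}(x)$ for every $x\in X$, establishing that $\mathcal{C}$ is a cut.

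\textbf{Step 2 (bound on the gap at $\bar x$).} By construction,
$$\mathcal{Q}(\bar x)-\mathcal{C}(\bar x)=\bigl[\mathcal{Q}(\bar x)-L_{\bar x}(\hat y(\varepsilon),\hat\lambda,\hat\mu)\bigr]+\ell_2(\hat y(\varepsilon),\bar x,\hat\lambda,\hat\mu),$$
so it suffices to show that the first bracket is bounded by $\varepsilon$. Assumption (H4) is a Slater condition at $\bar x$, hence there is no duality gap for \eqref{vfunctionq} at $x=\bar x$: the optimal dual value equals $\mathcal{Q}(\bar x)$. Since $(\hat\lambda,\hat\mu)$ is $\varepsilon$-optimal dual, $\theta_{\bar x}(\hat\lambda,\hat\mu)\geq \mathcal{Q}(\bar x)-\varepsilon$. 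Because $\hat y(\varepsilon)\in Y$,
$$L_{\bar x}(\hat y(\varepsilon),\hat\lambda,\hat\mu)\geq \inf_{y\in Y}L_{\bar x}(y,\hat\lambda,\hat\mu)=\theta_{\bar x}(\hat\lambda,\hat\mu)\geq \mathcal{Q}(\bar x)-\varepsilon,$$
which yields $\mathcal{Q}(\bar x)-L_{\bar x}(\hat y(\varepsilon),\hat\lambda,\hat\mu)\leq \varepsilon$, and hence $\mathcal{Q}(\bar x)-\mathcal{C}(\bar x)\leq \varepsilon+\ell_2(\hat y(\varepsilon),\bar x,\hat\lambda,\hat\mu)$.

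I do not expect a real obstacle. The only subtle point is that Step 2 uses the absence of a duality gap, which must be justified explicitly from (H4); the $\varepsilon$-optimality of the primal iterate is not even needed for the gap bound (only primal feasibility of $\hat y(\varepsilon)$ enters, through $\hat y(\varepsilon)\in Y$), although $\varepsilon$-optimality would be needed, e.g., to get an alternative bound via $f(\hat y(\varepsilon),\bar x)\leq \mathcal{Q}(\bar x)+\varepsilon$ together with complementary slackness. Everything else is a direct transcription of the fixed-feasible-set argument with $L_{\bar x}$ replacing $f$.
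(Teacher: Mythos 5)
Your proposal is correct and follows essentially the same route as the paper's proof: joint convexity of $L_{\cdot}(\cdot,\hat\lambda,\hat\mu)$ plus the gradient inequality and weak duality for the cut validity, and the chain $L_{\bar x}(\hat y(\varepsilon),\hat\lambda,\hat\mu)\geq\theta_{\bar x}(\hat\lambda,\hat\mu)\geq\mathcal{Q}(\bar x)-\varepsilon$ (via the no-duality-gap consequence of (H4)) for the gap bound. Your side remark that only primal feasibility of $\hat y(\varepsilon)$, not its $\varepsilon$-optimality, enters the bound is accurate — the paper states the primal $\varepsilon$-optimality inequality but never uses it in this argument.
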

\begin{proof}
To simplify notation, we use 
$\hat y, \hat \lambda, \hat \mu$, for respectively $\hat y(\epsilon), \hat \lambda(\epsilon), \hat \mu(\epsilon)$.
Consider primal problem \eqref{vfunctionq} written for $x=\bar x$.
Due to Assumption (H4) the optimal value $\mathcal{Q}(\bar x)$ of this problem
is the optimal value of the corresponding dual problem, i.e., of problem
\eqref{dualpb} written for $x=\bar x$. 
Using the fact that $\hat y$ and $(\hat \lambda, \hat \mu)$ are respectively $\varepsilon$-optimal primal
and dual solutions it follows that
\begin{equation}\label{optprimaldual}
f( \hat y , \bar x) \leq \mathcal{Q}(  \bar x  ) + \varepsilon \mbox{ and }\theta_{\bar x}( \hat \lambda , \hat \mu  ) \geq \mathcal{Q}(  \bar x  ) - \varepsilon. 
\end{equation}
Moreover, since the approximate primal and dual solutions are feasible, we have that
\begin{equation}\label{feasibility}
 \hat y \in Y,\, B {\bar x} + A {\hat y} = b,\,g(\hat y, \bar x) \leq 0, \,\hat \mu \geq 0.
\end{equation}
Using Relation \eqref{optprimaldual}, the definition of dual function $\theta_{{\bar x}}$, and the fact that $\hat y \in Y$, we get
\begin{equation}\label{optdual2}
L_{\bar x} (\hat y, {\hat \lambda}, \hat \mu ) \geq  \theta_{\bar x}( \hat \lambda , \hat \mu  ) \geq \mathcal{Q}( \bar x ) - \varepsilon.  
\end{equation}
Due to Assumptions (H1) and (H2), for any $\lambda$ and $\mu \geq 0$ the function $L_{\cdot}(\cdot,\lambda,\mu)$ which associates 
the value $L_{x}(y, \lambda, \mu)$ to $(x,y)$ is convex. 
It follows that for every $x \in X, y \in Y$, we have that 
$$
L_x (y, {\hat \lambda}, \hat \mu )   \geq  
L_{\bar x} ( \hat y, {\hat \lambda}, \hat \mu ) +\langle \nabla_x L_{\bar x} ( \hat y, {\hat \lambda}, \hat \mu ) , x - \bar x \rangle + \langle \nabla_y L_{\bar x} ( \hat y, {\hat \lambda}, \hat \mu ) , y - \hat y \rangle.
$$
Since $(\hat \lambda, \hat \mu)$ is dual feasible for dual problem \eqref{dualpb}, the Weak Duality Theorem gives
$\mathcal{Q}( x  ) \geq  \theta_{x}( \hat \lambda , \hat \mu  )  = \inf_{y \in Y} L_{x}(y, \hat \lambda , \hat \mu )$ 
for every $x \in X$
and minimizing over $y \in Y$ on each side of the above inequality  we obtain
$$
\mathcal{Q}( x  )   \geq  
\mathcal{C}(x)=
L_{\bar x} ( \hat y, {\hat \lambda}, \hat \mu )-  \ell_2 (\hat y,  \bar x  , \hat \lambda , \hat \mu ) + 
\langle \nabla_x L_{\bar x} ( \hat y, {\hat \lambda}, \hat \mu ) , x - \bar x \rangle.
$$
Finally, using relation \eqref{optdual2}, we get
$$
\mathcal{Q}( \bar x ) - \mathcal{C}( \bar x ) =
\mathcal{Q}( \bar x ) - L_{\bar x} ( \hat y, {\hat \lambda}, \hat \mu )+  \ell_2 (\hat y,  \bar x  , \hat \lambda , \hat \mu ) \leq \varepsilon + \ell_2 (\hat y , \bar x  , \hat \lambda , \hat \mu ).
$$\hfill
\end{proof}
We now refine the bound $\varepsilon + \ell_2 (\hat y(\epsilon),  \bar x  , \hat \lambda(\epsilon) , \hat \mu(\epsilon) )$ on $\mathcal{Q}( \bar x ) - \mathcal{C}( \bar x)$ given by Proposition \ref{varprop1}
making the following assumption:  
\begin{itemize}
\item[(H5)] $g$ is differentiable on $Y \small{\times} X$ and
there exists $M_2>0$ such that for every $i=1,\ldots,p, x \in X, y_1, y_2 \in Y$, we have
$$
\|\nabla_y g_i(y_2,x) -  \nabla_y g_i(y_1, x)  \|  \leq  M_2 \|y_2   - y_1\|. 
$$
\end{itemize}
\begin{prop} \label{varprop2}
Let $\bar x \in X$,
let $\hat y( \epsilon)$ be an $\epsilon$-optimal feasible primal solution for problem \eqref{vfunctionq}
written for $x= \bar x$ 
and let $(\hat \lambda( \epsilon), \hat \mu( \epsilon))$ be an $\epsilon$-optimal feasible solution of the
corresponding dual problem, i.e., of problem
\eqref{dualpb} written for $x=\bar x$.
Let also $\mathcal{L}_{\bar x}$ be any lower bound on $\mathcal{Q}( \bar x )$.
Let Assumptions (H1), (H2), (H3), (H4), and (H5) hold. 
Then $\mathcal{C}(x)$ given by \eqref{cutvarprop1} is a cut for $\mathcal{Q}$
at $\bar x$  and setting
$
M_3=M_1 + \mathcal{U}_{\bar x} M_2
$
with
$$
\mathcal{U}_{\bar x}= 
\frac{f(y_{\bar x} , \bar x  )  - \mathcal{L}_{\bar x} + \varepsilon }{\min (-g_{i}(y_{\bar x} ,  \bar x), i=1,\ldots,p )},
$$
the distance 
$\mathcal{Q}( \bar x ) - \mathcal{C}( \bar x)$ between the 
values of $\mathcal{Q}$ and of the cut at $\bar x$ is at most
$$
\varepsilon_0 = 
\left\{
\begin{array}{ll}
\varepsilon + \ell_2 (\hat y( \epsilon),  \bar x  , \hat \lambda( \epsilon) , \hat \mu( \epsilon) ) - \frac{\ell_2 (\hat y( \epsilon),  \bar x  , \hat \lambda( \epsilon) , \hat \mu( \epsilon) )^2}{2 M_3 \emph{Diam}(Y)^2} & \mbox{if }\ell_2 (\hat y( \epsilon),  \bar x  , \hat \lambda( \epsilon) , \hat \mu( \epsilon) ) \leq M_3 \emph{Diam}(Y)^2,\\
\varepsilon + \frac{1}{2} \ell_2 (\hat y( \epsilon),  \bar x  , \hat \lambda( \epsilon) , \hat \mu( \epsilon) ) & \mbox{otherwise.}
\end{array}
\right.
$$
\end{prop}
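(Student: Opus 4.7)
The plan is to parallel the proof of Proposition \ref{fixedprop1b} but applied to the Lagrangian $L_{\bar x}(\cdot, \hat\lambda, \hat\mu)$ in place of $f(\cdot, \bar x)$. The validity of $\mathcal{C}$ as a cut at $\bar x$ and the bound $\mathcal{Q}(\bar x) - \mathcal{C}(\bar x) \leq \varepsilon + \ell_2(\hat y(\varepsilon), \bar x, \hat\lambda(\varepsilon), \hat\mu(\varepsilon))$ are inherited directly from Proposition \ref{varprop1}, so I only need to sharpen the gap estimate under (H3) and (H5). Noting that
$\mathcal{C}(\bar x) = L_{\bar x}(\hat y, \hat\lambda, \hat\mu) - \ell_2(\hat y, \bar x, \hat\lambda, \hat\mu)$,
the task reduces to producing an upper bound on $\mathcal{Q}(\bar x) - L_{\bar x}(\hat y, \hat\lambda, \hat\mu)$ that is strictly smaller than $\ell_2$ when $\ell_2$ is positive.

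The key preliminary step, and the main technical obstacle, is bounding $\|\hat\mu(\varepsilon)\|_1$. Let $y_{\bar x}$ denote the Slater point from (H4) at $\bar x$. Since $y_{\bar x}$ is primal feasible,
$\theta_{\bar x}(\hat\lambda, \hat\mu) \leq L_{\bar x}(y_{\bar x}, \hat\lambda, \hat\mu) = f(y_{\bar x}, \bar x) + \hat\mu^T g(y_{\bar x}, \bar x)$,
and by $\varepsilon$-dual optimality $\theta_{\bar x}(\hat\lambda, \hat\mu) \geq \mathcal{Q}(\bar x) - \varepsilon \geq \mathcal{L}_{\bar x} - \varepsilon$. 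Rearranging and using $\hat\mu \geq 0$ together with $g_i(y_{\bar x}, \bar x) < 0$, one obtains
$\|\hat\mu\|_1 \min_i(-g_i(y_{\bar x}, \bar x)) \leq \sum_i \hat\mu_i (-g_i(y_{\bar x}, \bar x)) \leq f(y_{\bar x}, \bar x) - \mathcal{L}_{\bar x} + \varepsilon$,
giving $\|\hat\mu\|_1 \leq \mathcal{U}_{\bar x}$.

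With this bound in hand, I would combine (H3) and (H5): the gradient
$\nabla_y L_{\bar x}(y, \hat\lambda, \hat\mu) = \nabla_y f(y, \bar x) + A^T \hat\lambda + \sum_i \hat\mu_i \nabla_y g_i(y, \bar x)$
is Lipschitz in $y$ on $Y$ with constant $M_1 + \|\hat\mu\|_1 M_2 \leq M_3$. Picking $y_\ast \in Y$ that attains the maximum in the definition of $\ell_2$, the descent lemma applied to $L_{\bar x}(\cdot, \hat\lambda, \hat\mu)$ yields, for every $t \in [0,1]$,
$L_{\bar x}(\hat y + t(y_\ast - \hat y), \hat\lambda, \hat\mu) \leq L_{\bar x}(\hat y, \hat\lambda, \hat\mu) - t\,\ell_2 + \tfrac{1}{2} M_3 t^2 \operatorname{Diam}(Y)^2$.
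Since $\hat y + t(y_\ast - \hat y) \in Y$ by convexity, taking the infimum of the LHS over $y \in Y$ gives $\theta_{\bar x}(\hat\lambda, \hat\mu)$, which is at least $\mathcal{Q}(\bar x) - \varepsilon$ by $\varepsilon$-dual optimality.

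Finally I would maximize the bracket $t \ell_2 - \tfrac{1}{2} M_3 t^2 \operatorname{Diam}(Y)^2$ over $t \in [0,1]$, splitting into two cases: when $\ell_2 \leq M_3 \operatorname{Diam}(Y)^2$ the maximum is interior and equal to $\ell_2^2/(2 M_3 \operatorname{Diam}(Y)^2)$; otherwise it is attained at $t = 1$ and equals $\ell_2 - \tfrac{1}{2} M_3 \operatorname{Diam}(Y)^2 \geq \tfrac{1}{2}\ell_2$. Substituting these into $\mathcal{Q}(\bar x) \leq L_{\bar x}(\hat y, \hat\lambda, \hat\mu) + \varepsilon - (\text{bracket})$ and subtracting $\mathcal{C}(\bar x) = L_{\bar x}(\hat y, \hat\lambda, \hat\mu) - \ell_2$ produces exactly the two branches of $\varepsilon_0$ in the statement. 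The main delicate point is the bound on $\|\hat\mu\|_1$, since this is where the Slater hypothesis (H4), the lower bound $\mathcal{L}_{\bar x}$, and the error $\varepsilon$ enter; the rest is a routine quadratic optimization in $t$.
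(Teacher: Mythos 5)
Your proposal is correct and follows essentially the same route as the paper's proof: inherit the cut validity from Proposition \ref{varprop1}, bound $\|\hat \mu\|_1$ by $\mathcal{U}_{\bar x}$ via the Slater point $y_{\bar x}$ and the dual $\varepsilon$-optimality, deduce the Lipschitz constant $M_3$ for $\nabla_y L_{\bar x}(\cdot,\hat \lambda,\hat \mu)$ from (H3) and (H5), and then run the descent-lemma argument along the segment toward the maximizer of $\ell_2$ with the same two-case optimization in $t$. No gaps.
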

\begin{proof}
As before we use the short notation
$\hat y, \hat \lambda, \hat \mu$, for respectively $\hat y(\epsilon), \hat \lambda(\epsilon), \hat \mu(\epsilon)$.
We already know from Proposition \ref{varprop1} that $\mathcal{C}$ is a cut for $\mathcal{Q}$. Let us show that 
$\varepsilon_0$ is an upper bound for $\mathcal{Q}( \bar x ) - \mathcal{C}( \bar x)$.
We compute 
$$
\nabla_y L_{\bar x}(y, \lambda, \mu) =\nabla_y f(y, \bar x) + A^T \lambda + \sum_{i=1}^p \mu_i \nabla_y g_i(y, \bar x).
$$
Therefore for every $y_1, y_2 \in Y$, using Assumptions (H3) and (H5), we have
\begin{equation} \label{boundgradLag}
\|\nabla_y L_{\bar x}(y_2, {\hat \lambda}, {\hat \mu})  -   \nabla_y L_{\bar x}(y_1, {\hat \lambda}, {\hat \mu}) \| \leq (M_1 +  \|{\hat \mu}\|_1 M_2) \|y_2 - y_1\|.  
\end{equation}
Next observe that 
$$
\begin{array}{lll}
\mathcal{L}_{\bar x} - \varepsilon \leq 
\mathcal{Q}( \bar x )-\varepsilon  \leq \theta_{\bar x}(\hat \lambda, \hat \mu)  & \leq & f(y_{\bar x}, {\bar x}) + {\hat \lambda}^T ( A y_{\bar x} + B {\bar x} - b ) +  {\hat \mu}^T g( y_{\bar x}, \bar x ) \\
& \leq & f(y_{\bar x}, {\bar x}) + \|{\hat \mu}\|_1  \max_{i=1,\ldots,p} g_{i}(y_{\bar x} ,  \bar x).
\end{array}
$$
From the above relation, we get $\| {\hat \mu}\|_1 \leq \mathcal{U}_{\bar x}$, which, plugged into \eqref{boundgradLag} gives
\begin{equation}\label{boundlag}
\|\nabla_y L_{\bar x}(y_2, {\hat \lambda}, {\hat \mu})  -   \nabla_y L_{\bar x}(y_1, {\hat \lambda}, {\hat \mu}) \| \leq M_3 \|y_2 - y_1\|.
\end{equation}
The computations are now similar to the proof of Proposition \ref{fixedprop1b}.  
More precisely, let $y_* \in Y$ such that 
$$
\ell_2 (\hat y , \bar x , \hat \lambda , \hat \mu ) = 
\langle  \nabla_y L_{\bar x} ( \hat y , \hat \lambda , \hat \mu ) , \hat y - y_*  \rangle.
$$
Using relation \eqref{boundlag}, for every $0 \leq t \leq 1$, we get
$$
L_{\bar x}({\hat y} + t (y_* - {\hat y}), {\hat \lambda}, {\hat \mu}) \leq L_{\bar x}({\hat y}, {\hat \lambda}, {\hat \mu})
+ t \langle \nabla_y L_{\bar x}({\hat y}, {\hat \lambda}, {\hat \mu}), y_* - {\hat y} \rangle + \frac{1}{2} M_3 t^2 \|y_* - {\hat y}\|^2.
$$
Since ${\hat y} + t (y_* - {\hat y}) \in Y$, using the above relation and the definition of $\theta_{\bar x}$, we obtain
$$
\mathcal{Q}( \bar x ) - \varepsilon \leq \theta_{\bar x}(\hat \lambda , \hat \mu ) \leq L_{\bar x}({\hat y}, {\hat \lambda}, {\hat \mu}) -t \ell_2 (\hat y,  \bar x  , \hat \lambda , \hat \mu ) + \frac{1}{2} M_3 t^2 \|y_* - {\hat y}\|^2.
$$
Therefore
$$
\mathcal{Q}( \bar x ) - \mathcal{C}( \bar x ) = \mathcal{Q}( \bar x ) - L_{\bar x}({\hat y}, {\hat \lambda}, {\hat \mu}) + \ell_2 (\hat y,  \bar x  , \hat \lambda , \hat \mu )
\leq \varepsilon +  \ell_2 (\hat y,  \bar x  , \hat \lambda , \hat \mu ) + \min_{0 \leq t \leq 1} \Big(-t \ell_2 (\hat y,  \bar x  , \hat \lambda , \hat \mu ) + \frac{1}{2} M_3 t^2 \mbox{Diam}(Y)^2 \Big)
$$
and we easily conclude computing $\min_{0 \leq t \leq 1} \Big( -t \ell_2 (\hat y,  \bar x  , \hat \lambda , \hat \mu ) + \frac{1}{2} M_3 t^2 \mbox{Diam}(Y)^2 \Big)$.\hfill
\end{proof}
\begin{rem}\label{remextincutvar} As was done for the extension of Proposition \ref{fixedprop1} corresponding to Proposition \ref{fixedprop2},
we can extend Proposition \ref{varprop2} to the case when the optimization problem 
$\max_{y \in Y}  \langle  \nabla_y L_{\bar x} ( \hat y , \hat \lambda , \hat \mu ) , \hat y - y  \rangle$
with optimal value
$\ell_2 (\hat y , \bar x , \hat \lambda , \hat \mu )$
is  solved approximately.
\end{rem}

\section{Bounding the norm of $\varepsilon$-optimal solutions to the dual of a convex optimization problem} \label{sec:boundingmulti} 

Consider the following convex optimization problem:
\begin{equation} \label{defpbbounddual}
f_* =
\left\{
\begin{array}{l}
\min f(y)\\
Ay= b, \,g(y) \leq 0,\;y \in Y 
\end{array}
\right.
\end{equation}
where 
\begin{itemize}
\item[(i)] $Y \subset \mathbb{R}^n$ is a closed convex set and $A$ is a $q \small{\times}  n$ matrix;
\item[(ii)] $f: Y \rightarrow \mathbb{R}$ is convex Lipschitz continuous with Lipschitz constant $L( f )$;
\item[(iii)] $g: Y \rightarrow \mathbb{R}^p$ where all components of $g$ are convex Lipschitz continuous functions
with Lipschitz constant $L( g )$;
\item[(iv)] $f$ is bounded from below on the feasible set.
\end{itemize}
We also assume the following Slater type constraint qualification condition:
\begin{equation}\label{slaterboundmulti}
\mbox{SL: There exist }\kappa>0 \mbox{ and }y_0 \in \mbox{ri}( Y ) \mbox{ such that }g( y_0 ) \leq -\kappa {\textbf{e}} \mbox{ and }A y_0 = b   
\end{equation}
where {\textbf{e}} is a vector of ones in $\mathbb{R}^p$. 

Since SL holds, the optimal value $f_*$ of \eqref{defpbbounddual} can be written as the optimal value of the dual problem:
\begin{equation}\label{dualfirst}
f_* = \displaystyle \max_{\lambda , \mu \geq 0} \left\{ \theta(\lambda, \mu ) := \displaystyle \min_{y \in Y} \{ f(y) + \langle \lambda , Ay -b \rangle + \langle \mu , g(y) \rangle  \} \right\}.
\end{equation}

Consider the vector space $F=A\mbox{Aff}(Y) - b$ (recall that $0 \in F$).
Clearly for any $y\in Y$ and every $\lambda \in F^{\perp}$ we have $\lambda^T ( A y - b) = 0$ and therefore
for every $\lambda \in \mathbb{R}^q$, $\theta(\lambda , \mu )=\theta(\Pi_{F}( \lambda ) , \mu )$
where $\Pi_{F}( \lambda )$ is the orthogonal projection of $\lambda$ onto $F$.

It follows that if $F^{\perp} \neq \{ 0 \}$, the set of $\epsilon$-optimal dual solutions of
dual problem \eqref{dualfirst} is not bounded because from any $\epsilon$-optimal dual solution
$(\lambda(\varepsilon), \mu(\varepsilon))$ we can build an $\epsilon$-optimal dual solution
$(\lambda(\varepsilon)+\lambda, \mu(\varepsilon))$ with the same value of the dual function of norm
arbitrarily large taking $\lambda$ in $F^{\perp}$ with norm sufficiently large. 

However, the optimal value of the dual (and primal) problem can be written equivalently as
\begin{equation}\label{dualreform}
f_* = \displaystyle \max_{\lambda, \mu} \left\{  \theta(\lambda , \mu )  : \mu \geq 0, \lambda = A y-b, y \in \mbox{Aff}(Y) \right\}. 
\end{equation}

In this section, our goal is to derive bounds on the norm of $\epsilon$-optimal solutions to the dual of \eqref{defpbbounddual}
written in the form \eqref{dualreform}.

From Assumption SL, we deduce that there exists $r>0$ such that $\mathbb{B}_n( y_0 , r)  \cap   \mbox{Aff}(Y) \subseteq Y$
and that there is some ball $\mathbb{B}_q( 0, \rho_*)$ of positive radius $\rho_*$ such that
the intersection of this ball and of the set $A \mbox{Aff}(Y) - b$ is contained in the set 
$A\Big( \mathbb{B}_n( y_0 , r)  \cap   \mbox{Aff}(Y)  \Big)-b$. To define such $\rho_*$, let
$\rho: A \mbox{Aff}(Y)-b \rightarrow \mathbb{R}_{+}$ given by
$$
\rho(z)=\max \left\{t \|z\| \;:\;t\geq 0, tz \in A( \mathbb{B}_n( y_0 , r)  \cap   \mbox{Aff}(Y) )-b\right\}.
$$
Since $y_0 \in Y$, we can write $\mbox{Aff}(Y)=y_0 + V_Y$ where $V_Y$ is the vector space
$V_Y = \{x-y,\;x,y \in \mbox{Aff}(Y)\}$. Therefore
$$
A( \mathbb{B}_n( y_0 , r)  \cap   \mbox{Aff}(Y) )-b= A ( \mathbb{B}_n (0, r ) \cap V_Y )
$$
and $\rho$ can be reformulated as
\begin{equation}\label{definitionrho}
\rho(z)=\max \left\{t \|z\| \;:\;t\geq 0, tz \in A ( \mathbb{B}_n (0, r ) \cap V_Y )\right\}.
\end{equation}
Note that $\rho$ is well defined and finite valued (we have $ 0\leq \rho(z) \leq \|A\|r$).
Also, clearly $\rho( 0 ) = 0$ and
$\rho(z)=\rho(\lambda z)$ for every $\lambda > 0$ and $z \neq 0$. 
Therefore if $A=0$ then  $\rho_*$ can be any positive real, for instance $\rho_*=1$,
and if $A \neq 0$ 
we define 
\begin{equation}\label{defrhostar}
\begin{array}{lll}
\rho_* = \min \{\rho(z)\;:\;z\neq 0, z \in A \mbox{Aff}(Y)-b\}&=&\min \{\rho(z)\;:\;\|z\|=1, z \in A \mbox{Aff}(Y)-b\},\\
& = & \min \{\rho(z)\;:\;\|z\|=1, z \in A V_Y \},
\end{array}
\end{equation}
which is well defined and positive since $\rho(z)>0$ for every $z$ such that  $\|z\|=1, z \in A \mbox{Aff}(Y)-b$
(indeed if $z \in A \mbox{Aff}(Y)-b$ with $\|z\|=1$ then $z = A y-b$ for some $y \in\mbox{Aff}(Y), y \neq y_0$,
and since 
$$
\frac{r}{\|y-y_0\|} z = A\Big( y_0  + r \frac{y- y_0}{\|y-y_0\|} \Big)- b \in A\Big( \mathbb{B}_n( y_0 , r)  \cap   \mbox{Aff}(Y)  \Big)-b,
$$
we have $\rho(z) \geq \frac{r}{\|y-y_0\|}\|z\|=\frac{r}{\|y-y_0\|}>0$). We now claim that parameter $\rho_*$ we have just defined satisfies
our requirement namely
\begin{equation}\label{deductionfromSL}
\mathbb{B}_q( 0, \rho_*) \cap \Big(  A  \mbox{Aff}(Y) - b \Big)  \subseteq A\Big( \mathbb{B}_n( y_0 , r)  \cap   \mbox{Aff}(Y)  \Big)-b.
\end{equation}
This can be rewritten as 
\begin{equation}\label{deductionfromSLrewr}
\mathbb{B}_q( 0, \rho_*) \cap A  V_Y  \subseteq A\Big( \mathbb{B}_n( 0 , r)  \cap  V_Y  \Big).
\end{equation}
Indeed, let $z \in \mathbb{B}_q( 0, \rho_*) \cap \Big(  A \mbox{Aff}(Y) - b \Big)$.
If $A=0$ or $z=0$ then $ z \in A\Big( \mathbb{B}_n( y_0 , r)  \cap   \mbox{Aff}(Y)  \Big)-b$.
Otherwise, by definition of $\rho$, we have $\rho(z ) \geq \rho_* \geq \|z\|$.
Let ${\bar t} \geq 0$ be such that ${\bar t} z \in A( \mathbb{B}_n( y_0 , r)  \cap   \mbox{Aff}(Y) )-b$ and
$\rho(z)={\bar t}\|z\|$. The relations $({\bar t}-1)\|z\| \geq 0$ and $z \neq 0$ imply $\bar t \geq 1$.
By definition of $\bar t$, we can write ${\bar t} z = Ay -b$ where $y \in \mathbb{B}_n( y_0 , r)  \cap   \mbox{Aff}(Y)$.
It follows that $z$ can be written
$$
z=A\Big(y_0 + \frac{y-y_0}{{\bar t}}  \Big) -b = A {\bar y} - b
$$
where $\displaystyle  \bar y = y_0 + \frac{y-y_0}{{\bar t}}  \in \mbox{Aff}(Y)$ and 
$\|\bar y - y_0\|= \displaystyle \frac{\|y-y_0\|}{\bar t} \leq \|y - y_0\| \leq r$ (because $\bar t \geq 1$ and $y \in \mathbb{B}_n( y_0 , r)$).
This  means that $z \in A\Big( \mathbb{B}_n( y_0 , r)  \cap   \mbox{Aff}(Y)  \Big)-b$, which proves inclusion \eqref{deductionfromSL}.

We are now in a position to state the main result of this section:
\begin{prop}\label{propboundnormepsdualsol} Consider the optimization problem \eqref{defpbbounddual} with optimal value $f_*$.
Let Assumptions (i)-(iv) and SL hold and let $(\lambda(\varepsilon), \mu(\varepsilon))$ be an $\varepsilon$-optimal solution
to the dual problem \eqref{dualreform} with optimal value $f_*$.
Let 
\begin{equation}\label{defrkappaLg}
0<r \leq \frac{\kappa}{2 L( g )}, 
\end{equation}
be such that 
the intersection of the ball $\mathbb{B}_n(y_0, r)$ 
and of Aff($Y$) is contained in $Y$ (this $r$ exists because $y_0 \in \mbox{ri}(Y)$).
If $A=0$ let $\rho_*=1$. Otherwise, let $\rho_*$ given by
\eqref{defrhostar} with  $\rho$ as in  \eqref{definitionrho}.
Let $\mathcal{L}$ be any lower bound on the optimal value
$f_*$ of \eqref{defpbbounddual}. Then we have
$$
\|(\lambda(\varepsilon), \mu(\varepsilon)) \| \leq \frac{f( y_0 ) - \mathcal{L} + \varepsilon + L( f ) r }{\min( \rho_* , \kappa/2)}.
$$
\end{prop}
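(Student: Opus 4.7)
\medskip
\noindent\textbf{Proof proposal.} The plan is to produce an explicit feasible point $y \in Y$ close to the Slater point $y_0$ whose image under $A(\cdot) - b$ points in the direction opposite to $\lambda(\varepsilon)$, and then upper-bound $\theta(\lambda(\varepsilon), \mu(\varepsilon))$ by evaluating the Lagrangian at this $y$. Comparing with the dual optimality inequality $\theta(\lambda(\varepsilon), \mu(\varepsilon)) \geq f_* - \varepsilon \geq \mathcal{L} - \varepsilon$ will yield the claimed norm bound.

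First, assume $A \neq 0$; the case $A = 0$ forces $\lambda(\varepsilon) = 0$ and is handled by the same computation with $u = 0$. Since $\lambda(\varepsilon) \in A V_Y$ by the feasibility constraint in \eqref{dualreform}, the vector $z := -\rho_* \lambda(\varepsilon)/\|\lambda(\varepsilon)\|$ belongs to $\mathbb{B}_q(0, \rho_*) \cap A V_Y$ (if $\lambda(\varepsilon) = 0$, skip this construction). By the inclusion \eqref{deductionfromSLrewr} established earlier in the section, there exists $u \in \mathbb{B}_n(0, r) \cap V_Y$ with $A u = z$. Setting $y := y_0 + u$, the choice of $r$ ensures $y \in \mathbb{B}_n(y_0, r) \cap \mathrm{Aff}(Y) \subseteq Y$, and since $A y_0 = b$, we have $A y - b = A u = -\rho_* \lambda(\varepsilon)/\|\lambda(\varepsilon)\|$, so $\lambda(\varepsilon)^T (A y - b) = -\rho_* \|\lambda(\varepsilon)\|$.

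Next I would bound each piece of the Lagrangian $f(y) + \lambda(\varepsilon)^T(Ay - b) + \mu(\varepsilon)^T g(y)$. The Lipschitz properties give $f(y) \leq f(y_0) + L(f) r$ and $g_i(y) \leq g_i(y_0) + L(g) r \leq -\kappa + \kappa/2 = -\kappa/2$ for each $i$ (using $r \leq \kappa/(2 L(g))$ from \eqref{defrkappaLg}). Since $\mu(\varepsilon) \geq 0$, this yields $\mu(\varepsilon)^T g(y) \leq -(\kappa/2)\|\mu(\varepsilon)\|_1 \leq -(\kappa/2) \|\mu(\varepsilon)\|$. Combining,
$$
\theta(\lambda(\varepsilon), \mu(\varepsilon)) \leq f(y) + \lambda(\varepsilon)^T(Ay - b) + \mu(\varepsilon)^T g(y) \leq f(y_0) + L(f) r - \rho_* \|\lambda(\varepsilon)\| - \frac{\kappa}{2}\|\mu(\varepsilon)\|.
$$

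Finally, the $\varepsilon$-optimality of $(\lambda(\varepsilon), \mu(\varepsilon))$ and the definition of $\mathcal{L}$ give $\theta(\lambda(\varepsilon), \mu(\varepsilon)) \geq \mathcal{L} - \varepsilon$. Chaining the two inequalities and rearranging,
$$
\rho_* \|\lambda(\varepsilon)\| + \frac{\kappa}{2}\|\mu(\varepsilon)\| \leq f(y_0) - \mathcal{L} + \varepsilon + L(f) r,
$$
so with $m := \min(\rho_*, \kappa/2)$ we obtain $m(\|\lambda(\varepsilon)\| + \|\mu(\varepsilon)\|) \leq f(y_0) - \mathcal{L} + \varepsilon + L(f) r$, and the bound $\|(\lambda(\varepsilon), \mu(\varepsilon))\| \leq \|\lambda(\varepsilon)\| + \|\mu(\varepsilon)\|$ closes the proof. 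The only delicate step is the application of \eqref{deductionfromSLrewr} to select $u \in V_Y$ aligned with $-\lambda(\varepsilon)$; once $\rho_*$ has been engineered as in \eqref{defrhostar}, the rest is a direct Slater-type computation.
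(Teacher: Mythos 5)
Your proof is correct and follows essentially the same route as the paper's: construct $\bar y = y_0 + u \in \mathbb{B}_n(y_0,r)\cap \mathrm{Aff}(Y) \subseteq Y$ with $A\bar y - b = -\rho_*\lambda(\varepsilon)/\|\lambda(\varepsilon)\|$ via the inclusion \eqref{deductionfromSLrewr}, evaluate the Lagrangian there using the Lipschitz bounds and \eqref{defrkappaLg}, and compare with $\theta(\lambda(\varepsilon),\mu(\varepsilon)) \geq \mathcal{L}-\varepsilon$. The only cosmetic difference is that you phrase the inclusion in terms of $V_Y$ rather than $\mathrm{Aff}(Y)$, which the paper notes are equivalent formulations.
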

\begin{proof} By definition of $(\lambda(\varepsilon), \mu(\varepsilon))$ and of $\mathcal{L}$, we have
\begin{equation}\label{firstrelationboundepssol}
\mathcal{L} - \varepsilon \leq f_* - \varepsilon \leq \theta( \lambda(\varepsilon), \mu(\varepsilon)) . 
\end{equation}
Now define $z(\varepsilon)=0$ if $\lambda(\varepsilon)=0$ and
$z( \varepsilon )=-\frac{\rho_*}{\| \lambda( \varepsilon ) \| }  \lambda( \varepsilon )$ otherwise.
Observing that $z( \varepsilon ) \in \mathbb{B}_q( 0, \rho_*) \cap \Big(  A  \mbox{Aff}(Y) - b \Big)$
and using relation \eqref{deductionfromSL} we deduce that
$z( \varepsilon ) \in A\Big( \mathbb{B}_n( y_0 , r)  \cap   \mbox{Aff}(Y)  \Big)-b \subseteq AY -b$.
Therefore, we can write $z( \varepsilon ) = A {\bar y}-b$ for some
${\bar y} \in \mathbb{B}_n(y_0 , r) \cap \mbox{Aff}(Y) \subseteq Y$. Next, using the definition of $\theta$, we get 
$$
\begin{array}{lll}
\theta( \lambda(  \varepsilon ) , \mu ( \varepsilon  )  ) &   \leq & 
f( \bar y ) + \lambda( \varepsilon )^T ( A {\bar y} - b ) + \mu( \varepsilon )^T g( \bar y ) \mbox{ since }{\bar y} \in Y,\\
& \leq & f(y_0 ) + L( f ) r + z( \varepsilon )^T \lambda( \varepsilon ) + \mu( \varepsilon )^T g( y_0 ) + L(g) r  \| \mu( \varepsilon ) \|_1  \mbox{ using }(ii), (iii), \bar y \in \mathbb{B}_n(y_0, r),\\
& \leq & f(y_ 0) + L( f ) r - \rho_*  \| \lambda( \varepsilon ) \| - \frac{\kappa}{2} \| \mu( \varepsilon ) \|_1  \mbox{ using SL and }\eqref{defrkappaLg},
\end{array}
$$
which can be rewritten as
\begin{equation}\label{rewritingfinaldualnorm}
\begin{array}{lll}
\|(\lambda(  \varepsilon ) , \mu ( \varepsilon  ))\| &=& \sqrt{ \|\lambda(  \varepsilon )\|^2  + \|\mu(  \varepsilon )\|^2  }  \leq  \|\lambda(  \varepsilon )\| + \|\mu(  \varepsilon )\|
\leq  \|\lambda(  \varepsilon )\| + \|\mu(  \varepsilon )\|_1 \leq \frac{f(y_0 ) + L( f ) r - \theta(\lambda(  \varepsilon ) , \mu ( \varepsilon  ))}{\min( \rho_* , \kappa/2)}. 
\end{array}
\end{equation}
Combining \eqref{firstrelationboundepssol} with \eqref{rewritingfinaldualnorm}, we obtain the desired bound. \hfill
\end{proof}
Recalling that $\mbox{Aff}(Y)={\tilde y} + \mbox{span}(Y- {\tilde y})$ for any $\tilde y \in Y$, the constraints 
$y \in \mbox{Aff}(Y)$ in \eqref{dualreform}
can be written 
$y={\tilde y} +  \sum_{i=1}^k \alpha_i e_i$ in variables $(\alpha_i)_{i=1}^k$
where $(e_1,\ldots,e_k)$ is a basis
of $\mbox{span}(Y - {\tilde y})$ and $\tilde y$ is an arbitrary point chosen in $Y$. 
For instance, if $Y- {\tilde y}$ is a box, i.e., $Y- {\tilde y}=\{y \in \mathbb{R}^n : \ell \leq y \leq u \}$ with $\ell <0< u$ then $\mbox{span}(Y- {\tilde y})=\mathbb{R}^n$
and if $Y- {\tilde y}=\{y \in \mathbb{R}^n : \ell_i \leq y_i \leq u_i, i=1,\ldots,n_0, \;y_i = 0,\, i > n_0 \}$ with $\ell_i < 0 < u_i$
then the first $n_0$ vectors of the canonical basis of $\mathbb{R}^n$ form a basis of span($Y- {\tilde y}$)=$\mathbb{R}^{n_0}  \small{\times} \underbrace{\{0 \} \small{\times} \ldots \small{\times} \{0\}}_{n - n_0 \mbox{ times}} $. 

We also have the following immediate corollary of Proposition \ref{propboundnormepsdualsol}:

\begin{cor}\label{corpropboundnormepsdualsol} Under the assumptions of Proposition \ref{propboundnormepsdualsol}, let $\bar f$
be an upper bound on $f$ on the feasibility set of \eqref{defpbbounddual} and assume that
$\bar f$ is convex and Lipschitz continuous on $\mathbb{R}^n$ with Lipschitz constant $L( \bar f )$. Then we have for 
$\|(\lambda(\varepsilon), \mu(\varepsilon)) \|$ the bound 
$
\|(\lambda(\varepsilon), \mu(\varepsilon)) \| \leq \frac{ {\bar f} ( y_0 ) - \mathcal{L} + \varepsilon + L( \bar f ) r }{\min( \rho_* , \kappa/2)}.
$
\end{cor}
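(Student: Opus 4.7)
The plan is to mimic the proof of Proposition \ref{propboundnormepsdualsol} almost verbatim, replacing the Lipschitz bound on $f$ at the point $\bar y$ by a Lipschitz bound on $\bar f$. The only ingredient of the original argument that uses the specific function $f$ (as opposed to the dual multipliers, the geometry of $Y$, and Slater) is the estimate $f(\bar y)\le f(y_0)+L(f)\,r$ used to bound $\theta(\lambda(\varepsilon),\mu(\varepsilon))$ from above at the auxiliary point $\bar y\in\mathbb{B}_n(y_0,r)\cap\mathrm{Aff}(Y)\subseteq Y$. I will simply replace this by an estimate in terms of $\bar f$.

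First I would reproduce the construction of Proposition \ref{propboundnormepsdualsol}: set $z(\varepsilon)=0$ if $\lambda(\varepsilon)=0$, otherwise $z(\varepsilon)=-\frac{\rho_*}{\|\lambda(\varepsilon)\|}\lambda(\varepsilon)$, and use \eqref{deductionfromSL} to produce $\bar y\in\mathbb{B}_n(y_0,r)\cap\mathrm{Aff}(Y)\subseteq Y$ with $A\bar y-b=z(\varepsilon)$. Then, exactly as before,
\[
\theta(\lambda(\varepsilon),\mu(\varepsilon))\;\le\; f(\bar y)+\lambda(\varepsilon)^{\!\top}(A\bar y-b)+\mu(\varepsilon)^{\!\top} g(\bar y).
\]
At this point I would insert the new bound $f(\bar y)\le \bar f(\bar y)\le \bar f(y_0)+L(\bar f)\,r$, where the first inequality uses that $\bar f$ dominates $f$ (on the set where the auxiliary argument lives; since $y_0$ is feasible by SL, the dominance at $y_0$ is immediate, and the Lipschitz bound on $\mathbb{R}^n$ controls $\bar f(\bar y)-\bar f(y_0)$ by $L(\bar f)\,r$). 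The term $\mu(\varepsilon)^{\!\top} g(\bar y)$ is handled as in Proposition \ref{propboundnormepsdualsol}: by SL, $\mu(\varepsilon)^{\!\top} g(y_0)\le -\kappa\|\mu(\varepsilon)\|_1$, and adding the Lipschitz drift $L(g)\,r\,\|\mu(\varepsilon)\|_1$ together with the choice $r\le\kappa/(2L(g))$ from \eqref{defrkappaLg} gives $\mu(\varepsilon)^{\!\top} g(\bar y)\le -\tfrac{\kappa}{2}\|\mu(\varepsilon)\|_1$. Finally, $z(\varepsilon)^{\!\top}\lambda(\varepsilon)=-\rho_*\|\lambda(\varepsilon)\|$ (trivially if $\lambda(\varepsilon)=0$).

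Combining these three estimates yields
\[
\theta(\lambda(\varepsilon),\mu(\varepsilon))\;\le\;\bar f(y_0)+L(\bar f)\,r-\rho_*\,\|\lambda(\varepsilon)\|-\tfrac{\kappa}{2}\,\|\mu(\varepsilon)\|_1.
\]
I would then close the argument exactly as in \eqref{rewritingfinaldualnorm} of the previous proof, namely bound $\|(\lambda(\varepsilon),\mu(\varepsilon))\|\le \|\lambda(\varepsilon)\|+\|\mu(\varepsilon)\|_1$ and combine with the dual optimality inequality $\mathcal{L}-\varepsilon\le f_*-\varepsilon\le\theta(\lambda(\varepsilon),\mu(\varepsilon))$ to get the stated bound with $\min(\rho_*,\kappa/2)$ in the denominator.

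The only mildly delicate point, which I would flag but not dwell on, is justifying $f(\bar y)\le\bar f(\bar y)$: the hypothesis only asserts dominance on the feasibility set, while $\bar y$ need not satisfy $A\bar y=b$. The natural reading (and the one needed for the estimate) is that $\bar f$ dominates $f$ throughout $Y$, or at least on $\mathbb{B}_n(y_0,r)\cap\mathrm{Aff}(Y)$; once this is granted, the remainder of the proof is purely mechanical, simply tracking how the substitution $f\rightsquigarrow\bar f$ propagates through the inequalities of Proposition \ref{propboundnormepsdualsol}.
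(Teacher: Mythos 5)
Your proof is correct and is exactly the intended argument (the paper offers no proof of this corollary, presenting it as immediate): one reruns the proof of Proposition \ref{propboundnormepsdualsol} verbatim, replacing the step $f(\bar y)\le f(y_0)+L(f)\,r$ by $f(\bar y)\le \bar f(\bar y)\le \bar f(y_0)+L(\bar f)\,r$ and leaving the treatment of $z(\varepsilon)^{\top}\lambda(\varepsilon)$ and $\mu(\varepsilon)^{\top}g(\bar y)$ untouched. The subtlety you flag is genuine but minor: $\bar y$ satisfies $A\bar y-b=z(\varepsilon)$, which is nonzero in general, so $\bar y$ need not lie in the feasibility set, and the hypothesis must be read as domination of $f$ by $\bar f$ on $\mathbb{B}_n(y_0,r)\cap\mbox{Aff}(Y)$ (or on all of $Y$) — which is exactly what holds in the paper's later application in Proposition \ref{propboundeddual}, where $\bar f=f_t(\cdot,x_{t-1}^k)+\mathcal{Q}_{t+1}(\cdot)$ dominates the objective $f_t(\cdot,x_{t-1}^k)+\mathcal{Q}_{t+1}^{k}(\cdot)$ on all of $\mathcal{X}_t$.
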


\section{Inexact Dual Dynamic Programming (IDDP)} \label{iddp}

\subsection{Problem formulation and assumptions}\label{iddp1}

Consider the optimization problem
\begin{equation}\label{defpb}
\left\{
\begin{array}{l}
\displaystyle \inf_{x_1,\ldots,x_T}\;\sum_{t=1}^{T} f_t(x_{t}, x_{t-1}) \\
x_t \in X_t(x_{t-1}), t=1,\ldots,T,
\end{array}
\right.
\end{equation}
for $x_0$ given with the corresponding dynamic programming equations
$$
\mathcal{Q}_{t}(x_{t-1})=
\left\{
\begin{array}{l}
\displaystyle \inf_{x_t}\;F_t(x_{t}, x_{t-1}):=f_t(x_{t}, x_{t-1}) + \mathcal{Q}_{t+1}(x_{t})\\
x_t \in X_t(x_{t-1}),
\end{array}
\right.
$$
for $t=1,\ldots,T$, with $\mathcal{Q}_{T+1} \equiv 0$. Observe that $\mathcal{Q}_1 ( x_0 )$ is the optimal value of \eqref{defpb}.

We will consider two structures for sets $X_t(x_{t-1} ),t=1,\ldots,T$:
\begin{itemize}
\item[(S1)] $X_t( x_{t-1} ) = \mathcal{X}_t \subset \mathbb{R}^n$ (in this case, for short, we say that $X_t$ is of type S1); 
\item[(S2)] $X_t( x_{t-1} )= \{x_t \in \mathbb{R}^n : x_t \in \mathcal{X}_t,\;g_t(x_t, x_{t-1}) \leq 0,\;\;\displaystyle A_{t} x_{t} + B_{t} x_{t-1} = b_t \}$
(in this case, for short, we say that $X_t$ is of type S2).
\end{itemize}
Note that a mix of these types of constraints is allowed: for instance we can have $X_1$ of type S1 and $X_2$ of type $S2$.

Setting $\mathcal{X}_0=\{x_0\}$, we make the following assumptions (H1): for $t=1,\ldots,T$,\\

\par (H1)-(a) $\mathcal{X}_t$ is nonempty, convex, and compact.
\par (H1)-(b) The function
$f_t(\cdot, \cdot)$ is convex on  $\mathcal{X}_t \small{\times} \mathcal{X}_{t-1}$
and belongs to  $\mathcal{C}^{1}(\mathcal{X}_t \small{\times} \mathcal{X}_{t-1})$.\\

For $t=1,\ldots,T$, if $X_t$ is of type $S2$ we additionally assume that: there exists $\varepsilon_t>0$ such that
(without loss of generality, we will assume in the sequel that $\varepsilon_t=\varepsilon$)\\

\par (H1)-(c) each component $g_{t i}(\cdot, \cdot), i=1,\ldots,p$,  of the function $g_t(\cdot, \cdot)$ is
convex on $\mathcal{X}_t \small{\times} \mathcal{X}_{t-1}^{\varepsilon_t}$
and belongs to $\mathcal{C}^{1}( \mathcal{X}_t \small{\times} \mathcal{X}_{t-1} )$.
\par (H1)-(d) For every $x_{t-1} \in \mathcal{X}_{t-1}^{\varepsilon_t}$,
the set $X_t(x_{t-1}) \cap \mbox{ri}( \mathcal{X}_t )$ is nonempty.
\par (H1)-(e) If $t \geq 2$, there exists
${\bar x}_{t}=({\bar x}_{t t}, {\bar x}_{t  t-1}  ) \in  \mbox{ri}(\mathcal{X}_t) \small{\times} \mathcal{X}_{t-1}$
such that $A_t {\bar x}_{t t} + B_t {\bar x}_{t  t-1} = b_t$, and $g_t( {\bar x}_{t t} , {\bar x}_{t t-1} ) < 0 $.\\

Assumptions (H1)-(a), (b), (c) ensure that functions $\mathcal{Q}_t$ are convex.
Assumption (H1)-(d) is used to bound the cut coefficients (see Proposition \ref{propboundeddual})
and show that functions $\mathcal{Q}_t$ are Lipschitz continuous on $\mathcal{X}_{t-1}$.
Differentiability and Assumption (H1)-(e) are  useful to derive inexact cuts, see 
Sections \ref{iddp2}-\ref{iddp4}, in particular Lemma \ref{boundcutcoeff}. 

The Inexact Dual Dynamic Programming (IDDP) algorithm
to be presented in the next section is a solution method for problem \eqref{defpb} that exploits the convexity of $\mathcal{Q}_t, t=2,\ldots,T$.

\subsection{Inexact Dual Dynamic Programming: overview}\label{iddp2}

Similarly to DDP, to solve problem \eqref{defpb}, the Inexact Dual Dynamic Programming algorithm 
approximates for each $t=2,\ldots,T+1$, the function $\mathcal{Q}_{t}$ by
a polyhedral lower approximation $\mathcal{Q}_t^k$ at iteration $k$.

We start at the first iteration with 
the lower approximation $\mathcal{Q}_{t}^0=-\infty$ for
$\mathcal{Q}_{t}, t=2,\ldots,T$. At the beginning of iteration $k$, we have the lower polyhedral approximations (computed at previous iterations)
$\mathcal{Q}_t^{k-1}$ for $\mathcal{Q}_t$, whose computations are detailed below.

For convenience, for $t=1,\ldots,T,$ and $k \geq 0$,
let $F_t^{k}(y, x)=f_t(y, x  ) + \mathcal{Q}_{t+1}^{k}(y)$ and 
let ${\underline{\mathcal{Q}}}_t^{k}:\mathcal{X}_{t-1} \rightarrow \mathbb{R}$ given by
\begin{equation} \label{parametric}
{\underline{\mathcal{Q}}}_t^{k}(  x  ) = 
\left\{
\begin{array}{l}
\displaystyle \inf_{y \in \mathbb{R}^n}\;F_t^{k}(y, x)\\
y \in X_t( x ). 
\end{array}
\right.
\end{equation}

Iteration $k$ starts with a forward pass: for $t=1,\ldots,T$, we compute an $\varepsilon_t^k$-optimal solution $x_t^k$ of
\begin{equation} \label{forward1}
{\underline{\mathcal{Q}}}_{t}^{k-1}(x_{t-1}^k )=
\left\{
\begin{array}{l}
\displaystyle \inf_{y}\;F_t^{k-1}(y, x_{t-1}^k)\\
y \in X_t( x_{t-1}^k),
\end{array}
\right.
\end{equation}
starting from $x_0^{k}=x_0$
where $F_t^{k-1}(y, x_{t-1}^k)=f_t(y, x_{t-1}^k  ) + \mathcal{Q}_{t+1}^{k-1}(y)$ and knowing that $\mathcal{Q}_{T+1}^{k-1}=\mathcal{Q}_{T+1}\equiv 0$.
Therefore, we have 
\begin{equation} \label{epssolforward}
{\underline{\mathcal{Q}}}_{t}^{k-1}(x_{t-1}^k ) \leq F_t^{k-1}( x_t^k , x_{t-1}^k) \leq {\underline{\mathcal{Q}}}_{t}^{k-1}(x_{t-1}^k ) + \varepsilon_t^k.
\end{equation}

At iteration $k$, a backward pass then computes a cut $\mathcal{C}_t^k$ for $\mathcal{Q}_t$ at $x_{t-1}^k$ for $t=T+1$
down to $t=2$. For $t=T+1$, the cut is exact: $\mathcal{C}_{T+1}^k \equiv 0$.
For step $t<T+1$, we compute an $\varepsilon_t^k$-optimal solution $x_t^{B k} \in X_t ( x_{t-1}^k)$ of
\begin{equation} \label{backward1}
{\underline{\mathcal{Q}}}_{t}^{k}(x_{t-1}^k )=
\left\{
\begin{array}{l}
\displaystyle \inf_{y}\;F_t^{k}(y, x_{t-1}^k)\\
y \in X_t ( x_{t-1}^k),
\end{array}
\right.
\end{equation}
knowing $\mathcal{Q}_{t+1}^{k}$. It follows that
\begin{equation}\label{epssolprimal}
x_{t}^{B k} \in X_t( x_{t-1}^k ) \mbox{ and }{\underline{\mathcal{Q}}}_{t}^{k}(x_{t-1}^k ) \leq F_t^{k}( x_t^{B k}  , x_{t-1}^k ) \leq {\underline{\mathcal{Q}}}_{t}^{k}(x_{t-1}^k )  + \varepsilon_t^k.
\end{equation}
If $X_t$ is of type $S2$ we also compute an $\varepsilon_t^k$-optimal solution  $(\lambda_t^k, \mu_t^k)$  of the  dual problem
\begin{equation}\label{dualproblemS2}
\left\{
\begin{array}{l}
\sup \; h_{t, x_{t-1}^k}^k (\lambda, \mu) \\
\lambda = A_t y +B_t x_{t-1}^k -b_t, y \in \mbox{Aff}( \mathcal{X}_t ),\; \mu \in \mathbb{R}_{+}^p
\end{array}
\right.
\end{equation}
for the dual function
\begin{equation}\label{dualiddp}
h_{t, x_{t-1}^k }^k (\lambda, \mu) = 
\left\{
\begin{array}{l}
\inf \;F_t^k(y, x_{t-1}^k ) + \lambda^T(  A_t y + B_t x_{t-1}^k  - b_t ) + \mu^T g_t(y, x_{t-1}^k )\\
y \in \mathcal{X}_t.
\end{array}
\right.
\end{equation}
We now check that Assumption (H1) implies that the following Slater type constraint qualification condition holds for problem \eqref{backward1}
(i.e. for all problems solved in the backward passes):
\begin{equation}\label{slater}
\mbox{there exists }{\tilde x}_t^k \in \mbox{ri}(\mathcal{X}_{t}) \mbox { such that }A_t {\tilde x}_t^k + B_t x_{t-1}^k = b_t \mbox{ and }g_t({\tilde x}_t^k,  x_{t-1}^k)<0.
\end{equation}
The above constraint qualification condition is the analogue of \eqref{slaterboundmulti} for problem \eqref{backward1}.
\begin{lemma}\label{boundcutcoeff} 
Let Assumption (H1) hold. Then for every $k \in \mathbb{N}^*$, \eqref{slater} holds. 
\end{lemma}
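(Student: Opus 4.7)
The plan is to exploit the $\varepsilon$-fattening in assumption (H1)-(d) as an extra degree of freedom that lets us mix the weakly feasible point available at $x_{t-1}^k$ with the strict Slater pair $(\bar x_{tt},\bar x_{tt-1})$ provided by (H1)-(e), without breaking the equality constraint $A_t\tilde x_t^k+B_tx_{t-1}^k=b_t$. The naive convex combination of $(y_k,x_{t-1}^k)$ (with $y_k$ coming from (H1)-(d) at $x_{t-1}^k$ itself) with $(\bar x_{tt},\bar x_{tt-1})$ is strictly feasible in the joint $(y,x)$ sense, but its $x$-coordinate lands at $(1-\alpha)x_{t-1}^k+\alpha\bar x_{tt-1}\neq x_{t-1}^k$; the idea is therefore to do the mixing \emph{on the far side} of $x_{t-1}^k$ from $\bar x_{tt-1}$, exploiting that $\mathcal{X}_{t-1}^{\varepsilon}$ strictly contains $\mathcal{X}_{t-1}$.

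First I would check that $x_{t-1}^k\in\mathcal{X}_{t-1}$, which is immediate by induction on $t$ since $x_{t-1}^k$ is an $\varepsilon_{t-1}^k$-optimal, and hence feasible, solution of the forward problem \eqref{forward1}, whose feasible set lies in $\mathcal{X}_{t-1}$. For $t\geq 2$, the subcase $x_{t-1}^k=\bar x_{tt-1}$ is trivial: one takes $\tilde x_t^k=\bar x_{tt}$ and invokes (H1)-(e) directly. Otherwise I would pick $\delta\in(0,\varepsilon/\|x_{t-1}^k-\bar x_{tt-1}\|]$ and set
\[
z_\delta:=x_{t-1}^k+\delta(x_{t-1}^k-\bar x_{tt-1}),\qquad x_{t-1}^k=\tfrac{1}{1+\delta}\,z_\delta+\tfrac{\delta}{1+\delta}\,\bar x_{tt-1}.
\]
Since $\|z_\delta-x_{t-1}^k\|\leq\varepsilon$ and $x_{t-1}^k\in\mathcal{X}_{t-1}$, one has $z_\delta\in\mathcal{X}_{t-1}^{\varepsilon}$, so (H1)-(d) supplies some $y_\delta\in X_t(z_\delta)\cap\mbox{ri}(\mathcal{X}_t)$; in particular $A_ty_\delta+B_tz_\delta=b_t$ and $g_t(y_\delta,z_\delta)\leq 0$.

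I would then mirror this convex representation on the $y$-side by defining
\[
\tilde x_t^k:=\tfrac{1}{1+\delta}\,y_\delta+\tfrac{\delta}{1+\delta}\,\bar x_{tt}.
\]
Three verifications close the argument: (i) $\tilde x_t^k\in\mbox{ri}(\mathcal{X}_t)$ as a convex combination of two points in $\mbox{ri}(\mathcal{X}_t)$; (ii) by linearity,
\[
A_t\tilde x_t^k+B_tx_{t-1}^k=\tfrac{1}{1+\delta}(A_ty_\delta+B_tz_\delta)+\tfrac{\delta}{1+\delta}(A_t\bar x_{tt}+B_t\bar x_{tt-1})=b_t;
\]
and (iii) joint convexity of the components of $g_t$ from (H1)-(c) yields
\[
g_t(\tilde x_t^k,x_{t-1}^k)\leq\tfrac{1}{1+\delta}g_t(y_\delta,z_\delta)+\tfrac{\delta}{1+\delta}g_t(\bar x_{tt},\bar x_{tt-1})<0,
\]
because the first term is nonpositive and the second strictly negative with strictly positive weight. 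The degenerate case $t=1$, where $\mathcal{X}_0=\{x_0\}$ forces $\bar x_{10}=x_0=x_0^k$, reduces to applying (H1)-(e) (read at $t=1$) directly.

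The only delicate point is spotting this construction: any attempt to build a Slater candidate by convex combinations inside $\mathcal{X}_{t-1}$ alone collapses back to $y_k$ after the equality is corrected in $\ker A_t$, so the $\varepsilon$-fattening in (H1)-(d) is genuinely essential — it turns $x_{t-1}^k$ from an endpoint into an interior point of a convex combination, thereby allowing the strict slack furnished by (H1)-(e) to be transferred into the fibre $\{A_t y=b_t-B_tx_{t-1}^k\}$.
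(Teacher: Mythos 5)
Your proof is correct and follows essentially the same route as the paper's: push $x_{t-1}^k$ into the $\varepsilon$-fattening along the direction away from $\bar x_{t\,t-1}$, invoke (H1)-(d) there, and recover $x_{t-1}^k$ as a strict convex combination with the Slater pair from (H1)-(e), transferring the strict inequality via joint convexity of $g_t$. The only differences are cosmetic (your $\delta$-parametrization versus the paper's $\theta_0$), so no further comment is needed.
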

\begin{proof}
If $x_{t-1}^k = {\bar x}_{t t-1}$ then recalling (H1)-(e), \eqref{slater} holds with ${\tilde x}_t^k  = {\bar x}_{t t}$.
Otherwise, we define
$$
x_{t-1}^{k \varepsilon}  = x_{t-1}^k + \varepsilon \frac{x_{t-1}^k - {\bar x}_{t t-1}}{\|x_{t-1}^k - {\bar x}_{t t-1}\|}.
$$
Observe that since $x_{t-1}^k \in \mathcal{X}_{t-1}$, we have $x_{t-1}^{k \varepsilon} \in \mathcal{X}_{t-1}^{\varepsilon}$. Setting
$$
X_t=\{(x_t, x_{t-1}) \in \mbox{ri}(\mathcal{X}_{t}) \small{\times} \mathcal{X}_{t-1}^{\varepsilon}: A_t x_t + B_t x_{t-1} = b_t,\;g_t(x_t, x_{t-1}) \leq 0   \},
$$
since $x_{t-1}^{k \varepsilon} \in \mathcal{X}_{t-1}^{\varepsilon}$, using (H1)-(d), there exists
$x_{t}^{k \varepsilon} \in \mbox{ri}(\mathcal{X}_t )$ such that
$(x_{t}^{k \varepsilon}, x_{t-1}^{k \varepsilon}) \in X_t$.
Now clearly, since $\mathcal{X}_t$ and $\mathcal{X}_{t-1}$ are convex, the set $\mbox{ri}(\mathcal{X}_{t}) \small{\times} \mathcal{X}_{t-1}^{\varepsilon}$
is convex too and using (H1)-(c), we obtain that $X_t$ is convex.
Since $({\bar x}_{t t}, {\bar x}_{t t-1}) \in X_t$ (due to Assumption (H1)-(e)) 
and recalling that $(x_{t}^{k \varepsilon}, x_{t-1}^{k \varepsilon}) \in X_t$, we obtain
that for every $0<\theta<1$, the point 
\begin{equation}\label{defxttheta}
(x_t(\theta), x_{t-1}(\theta))=(1-\theta) ({\bar x}_{t t}, {\bar x}_{t t-1}) + \theta (x_{t}^{k \varepsilon}, x_{t-1}^{k \varepsilon}) \in X_t. 
\end{equation}
For 
\begin{equation}\label{deftheta0}
0<\theta=\theta_0=\frac{1}{1+\frac{\varepsilon_0}{2 \|x_{t-1}^k - {\bar x}_{t t-1}\|}}<1,
\end{equation}
we get $x_{t-1}(\theta_0)=x_{t-1}^k$, $x_t(\theta_0 ) \in \mbox{ri}(\mathcal{X}_{t}), 
A_t x_t(\theta_0 ) + B_t x_{t-1}(\theta_0 ) = A_t x_t(\theta_0 ) + B_t x_{t-1}^k = b_t$, 
and since $g_{t i}, i=1,\ldots,p$, are convex on $\mathcal{X}_t \small{\times} \mathcal{X}_{t-1}^{\varepsilon}$  (see Assumption (H1)-(c)) and therefore on $X_t$, we get 
$$
\begin{array}{lll}
g_t(x_t(\theta_0), x_{t-1}(\theta_0)) & = & g_t(x_t(\theta_0), x_{t-1}^k) \\  
&\leq & \underbrace{(1-\theta_0)}_{>0} \underbrace{g_t( {\bar x}_{t t}, {\bar x}_{t t-1} )}_{<0} + 
\underbrace{\theta_0}_{>0} \underbrace{g_t( x_{t}^{k \varepsilon}, x_{t-1}^{k, \varepsilon } )}_{\leq 0} <0.
\end{array}
$$
We have justified that \eqref{slater} holds with ${\tilde x}_t^k = x_t(\theta_0 )$.\hfill
\end{proof}
From \eqref{slater}, we deduce that the optimal value ${\underline{\mathcal{Q}}}_{t}^{k}(x_{t-1}^k )$ of primal problem \eqref{backward1}
is the optimal value of dual problem \eqref{dualproblemS2} and therefore $\varepsilon_t^k$-optimal dual solution
$(\lambda_t^k , \mu_t^k)$ satisfies:
\begin{equation}\label{defepssoldual}
{\underline{\mathcal{Q}}}_{t}^{k}(x_{t-1}^k ) - \varepsilon_t^k  \leq   h_{t, x_{t-1}^k}^k (\lambda_t^k , \mu_t^k ) \leq {\underline{\mathcal{Q}}}_{t}^{k}(x_{t-1}^k ). 
\end{equation}
We now intend to use the results of Section \ref{sec:computeinexactcuts} to derive an inexact cut $\mathcal{C}_t^k$
for $\mathcal{Q}_t$ at $x_{t-1}^k$. Since for all iteration $k$ the relation $\mathcal{Q}_t \geq {\underline{\mathcal{Q}}}_t^k$
is preserved, $\mathcal{C}_t^k$ will in fact be an inexact cut for ${\underline{\mathcal{Q}}}_t^k$ and therefore for 
$\mathcal{Q}_t$. To proceed, let us write function $\mathcal{Q}_{t+1}^k$, which is a maximum of $k$
affine functions, in the form
$$
\mathcal{Q}_{t+1}^{k}(x_{t}) =\displaystyle \max_{1 \leq j \leq k} \Big(\mathcal{C}_{t+1}^j(x_{t}) := \theta_{t+1}^j -  \eta_{t+1}^j (  \varepsilon_{t+1}^j ) + \langle \beta_{t+1}^{j}, x_{t} -  x_{t}^j \rangle\Big)
$$
for some coefficients $\theta_{t+1}^j, \eta_{t+1}^j (  \varepsilon_{t+1}^j )$, and  $\beta_{t+1}^{j}$ whose iterative computation is detailed below
with the convention that for $t=T$ coefficients $\theta_{t+1}^j, \eta_{t+1}^j (  \varepsilon_{t+1}^j ), \beta_{t+1}^{j}$ 
are all zero.
Plugging this representation into \eqref{backward1}, we get
\begin{equation} \label{backward1bis}
{\underline{\mathcal{Q}}}_{t}^{k}(x_{t-1}^k )=
\left\{
\begin{array}{l}
\displaystyle \inf_{x_t, y_t}\;f_t(x_t, x_{t-1}^k) + y_t\\
x_t \in X_t( x_{t-1}^k ),\\
y_t \geq \theta_{t+1}^j -  \eta_{t+1}^j (  \varepsilon_{t+1}^j ) + \langle \beta_{t+1}^{j}, x_t -  x_{t}^j \rangle,j=1,\ldots,k,
\end{array}
\right.
\end{equation}
which is of form \eqref{vfunctionq} with 
$$
\begin{array}{l}
y=(x_t,y_t), x=x_{t-1}^k, f(y,x)=f_t(x_t,x)+y_t, Y=\{y=[x_t ; y_t ] :x_t \in \mathcal{X}_t, B_{t+1}^k y \leq b_{t+1}^k \},
\end{array}
$$
and for constraints of type $S2$
$$
\begin{array}{l}
A=[A_t \;0_{q \small{\times} 1} ], B=B_t, b=b_t, g(y,x)=g_t(x_t,x),
\end{array}
$$
where the $j$-th line of matrix $B_{t+1}^k$ is $[(\beta_{t+1}^j)^T, -1]$
and the $j$-th component of $b_{t+1}^k$ is $-\theta_{t+1}^j +  \eta_{t+1}^j (  \varepsilon_{t+1}^j ) - \langle \beta_{t+1}^{j} x_{t}^j \rangle$.
We can now  use the results of Section \ref{sec:computeinexactcuts} and consider several cases depending on the problem structure.

\subsection{Computation of inexact cuts in the backward pass for constraints of type $S1$}\label{iddp3}

Let us first consider the case where $X_t$ is of type $S1$.
Let $(x_t^{B k}, y_t^{B k})$ be an $\varepsilon_t^k$-optimal solution of 
\begin{equation} \label{backward1ter}
{\underline{\mathcal{Q}}}_{t}^{k}(x_{t-1}^k )=
\left\{
\begin{array}{l}
\displaystyle \inf_{x_t, y_t}\;f_t(x_t, x_{t-1}^k) + y_t\\
x_t \in \mathcal{X}_t, B_{t+1}^k \left[ \begin{array}{c}x_t\\y_t\end{array} \right]\leq b_{t+1}^k.\\
\end{array}
\right.
\end{equation}
We compute
$$
\theta_t^k=f_t( x_t^{B k}, x_{t-1}^k)+ y_t^{B  k},\; \eta_t^k( \varepsilon_t^k ) = \ell_{1 t}^{k}( x_t^{B  k} , y_t^{B  k},  x_{t-1}^k ), \beta_t^k=\nabla_{x_{t-1}} f_t( x_t^{B  k}, x_{t-1}^k ), 
$$
where 
\begin{equation}\label{defetaiddp0}
\ell_{1 t}^{k}( x_t^{B  k} , y_t^{B  k},  x_{t-1}^k )= 
\left\{
\begin{array}{l}
\max_{x_t, y_t}  \langle \nabla_{x_t} f_t( x_t^{B  k}, x_{t-1}^k ), x_t^{B  k} - x_t \rangle + y_t^{B  k} -y_t\\
x_t \in \mathcal{X}_t, B_{t+1}^k \left[ \begin{array}{c}x_t\\y_t\end{array} \right]  \leq b_{t+1}^k.
\end{array}
\right.
\end{equation}
Using Proposition \ref{fixedprop1} we have that
$\mathcal{C}_{t}^k(x_{t-1}) = \theta_{t}^k -  \eta_{t}^k (  \varepsilon_{t}^k ) + \langle \beta_{t}^{k}, x_{t-1} -  x_{t-1}^k \rangle$
is an inexact cut for ${\underline{\mathcal{Q}}}_{t}^{k}$ and therefore for $\mathcal{Q}_t$.
Moreover, the distance between ${\underline{\mathcal{Q}}}_{t}^{k}( x_{t-1}^k ) $  and $\mathcal{C}_{t}^k(x_{t-1}^k)$ is at most 
$\eta_{t}^k (  \varepsilon_{t}^k ) = \ell_{1 t}^{k}( x_t^{B  k} , y_t^{B  k},  x_{t-1}^k )$.

\subsection{Computation of inexact cuts in the backward pass for constraints of type $S2$}\label{iddp4}

We now consider the case where $X_t$ is of type $S2$.
Let $(x_t^{B  k}, y_t^{B  k})$ be an $\varepsilon_t^k$-optimal solution of 
\begin{equation} \label{backward1terS2}
{\underline{\mathcal{Q}}}_{t}^{k}(x_{t-1}^k )=
\left\{
\begin{array}{l}
\displaystyle \inf_{x_t, y_t}\;f_t(x_t, x_{t-1}^k) + y_t\\
x_t \in X_t( x_{t-1}^k ), B_{t+1}^k \left[ \begin{array}{c}x_t\\y_t\end{array} \right]\leq b_{t+1}^k.
\end{array}
\right.
\end{equation}
Define for problem \eqref{backward1terS2} the Lagrangian 
$$
L_{x_{t-1}^k}(x_t, y_t, \lambda, \mu)=f_t(x_t, x_{t-1}^k) + y_t
+\lambda^T ( A_t x_t + B_t x_{t-1}^k - b_t) +\mu^T  g_t(x_t, x_{t-1}^k )
$$
and 
\begin{equation}\label{defetaiddp}
\ell_{2 t}^{k}({x}_t^{B  k}, y_t^{B  k}, x_{t-1}^k, \lambda, \mu)=
\left\{
\begin{array}{l}
\max_{x_t, y_t} \;\langle \nabla_{x_t} L_{x_{t-1}^k}({x}_t^{B  k} , {y}_t^{B  k}, \lambda, \mu),x_t^{B  k} - x_t   \rangle + y_t^{B  k} - y_t   \\
x_t \in \mathcal{X}_t, \; B_{t+1}^k \left[ \begin{array}{c}x_t\\y_t\end{array} \right]\leq b_{t+1}^k.
\end{array}
\right.
\end{equation}
With this notation and recalling that $(\lambda_t^k, \mu_t^k)$ is an $\varepsilon_t^k$-optimal solution of \eqref{dualproblemS2} we put
\begin{equation} \label{defbetatk}
\begin{array}{l}
\theta_t^k = L_{x_{t-1}^k}(x_t^{B  k}, y_t^{B  k}, \lambda_t^k, \mu_t^k ),\;\;\;\eta_{t}^k (  \varepsilon_{t}^k )  = \ell_{2 t}^{k}(x_t^{B  k}, y_t^{B  k}, x_{t-1}^k, \lambda_t^k , \mu_t^k ),\\
\beta_t^k = \nabla_{x_{t-1}} f_t(x_t^{B  k}, x_{t-1}^{k})+ B_t^T \lambda_{t}^{k} +  \sum_{i=1}^{p}\; \mu_t^{k}(i) \nabla_{x_{t-1}} g_{t i}(x_t^{B  k}, x_{t-1}^{k}).
\end{array}
\end{equation}
Using Proposition \ref{varprop1}, the affine function
$$
\mathcal{C}_t^k( x_{t-1}) = \theta_{t}^k -  \eta_{t}^k (  \varepsilon_{t}^k ) + \langle \beta_t^k ,  x_{t-1} - x_{t-1}^k  \rangle
$$
defines an inexact cut for $\mathcal{Q}_t$.
Moreover, the distance between ${\underline{\mathcal{Q}}}_{t}^{k}( x_{t-1}^k ) $  and $\mathcal{C}_{t}^k(x_{t-1}^k)$ is at most 
$\varepsilon_t^k + \ell_{2 t}^{k}(x_t^{B  k}, y_t^{B  k}, x_{t-1}^k, \lambda_t^k , \mu_t^k )=\varepsilon_t^k + \eta_{t}^k (  \varepsilon_{t}^k )$.\\

\par For IDDP, we assume that nonlinear optimization problems (such as primal problems \eqref{backward1ter}, \eqref{backward1terS2}
or dual problem \eqref{dualproblemS2}) are solved approximately
whereas linear optimization problems are solved exactly. 
Notice that we assumed that we can compute the optimal value $\ell_{1 t}^{k}( x_t^{B  k} , y_t^{B  k},  x_{t-1}^k )$
of optimization problem \eqref{defetaiddp0} and the optimal value
$\ell_{2 t}^{k}({x}_t^{B  k}, y_t^{B  k}, x_{t-1}^k, \lambda_t^k , \mu_t^k)$
of optimization problem \eqref{defetaiddp} written for $(\lambda, \mu) = ( \lambda_t^k , \mu_t^k )$. Since these optimization problems have a linear objective function, they are linear programs if and only if 
$\mathcal{X}_t$ is polyhedral. If this is not the case then 
\begin{itemize}
\item[a)] either we add components  to $g$ 
pushing
the nonlinear constraints in the representation of $\mathcal{X}_t$ in $g$ or
\item[b)] we also solve approximately \eqref{defetaiddp0} and \eqref{defetaiddp}.  
\end{itemize}
In Case b), we can still build an inexact cut $\mathcal{C}_t^k$ and study the convergence of the corresponding variant of IDDP along the lines of Section \ref{iddp5}.
More precisely, in this situation, we obtain cut $\mathcal{C}_t^k$ using 
Proposition \ref{fixedprop2} instead of
Proposition \ref{fixedprop1} if $X_t$ is of type $S1$. If $X_t$ is of type $S2$
we can use the extension of Proposition \ref{varprop1} obtained 
when \eqref{defrxy} is solved approximately, exactly as was done for the extension 
of Proposition \ref{fixedprop1} corresponding to Proposition \ref{fixedprop2}.

\subsection{Convergence analysis}\label{iddp5}

The main result of this section is  Theorem \ref{convaniddp}, a convergence analysis of IDDP.

We will use the following immediate observation:
\begin{lemma} \label{convrecfuncQt} For $t=2,\ldots,T+1$, function $\mathcal{Q}_t$ is convex and Lipschitz continuous on 
$\mathcal{X}_{t-1}$.
\end{lemma}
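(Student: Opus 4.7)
The plan is to proceed by backward induction on $t$ from $T+1$ down to $2$. The base case $t=T+1$ is immediate since $\mathcal{Q}_{T+1}\equiv 0$ is trivially convex and Lipschitz. For the inductive step, I assume $\mathcal{Q}_{t+1}$ is convex and Lipschitz (hence finite and continuous) on the compact set $\mathcal{X}_t$, and then establish the two properties for $\mathcal{Q}_t$.

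To obtain convexity, I note that under the induction hypothesis the function $F_t(y,x) = f_t(y,x) + \mathcal{Q}_{t+1}(y)$ is jointly convex in $(y,x)$ on $\mathcal{X}_t \small{\times} \mathcal{X}_{t-1}$, using (H1)-(b) and the convexity of $\mathcal{Q}_{t+1}$. If $X_t$ is of type $S1$, the constraint set is independent of $x$, so convexity of $\mathcal{Q}_t$ follows from the standard fact that partial minimization of a jointly convex function over a fixed convex set yields a convex function. If $X_t$ is of type $S2$, the joint feasible set $\{(y,x) \in \mathcal{X}_t \small{\times} \mathcal{X}_{t-1} : A_t y + B_t x = b_t,\; g_t(y,x) \le 0\}$ is convex by (H1)-(a) and (H1)-(c), so partial minimization again yields convexity.

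For Lipschitz continuity, my plan is to invoke Lemma \ref{dervaluefunction} with $\mathcal{Q}_t$ playing the role of $\mathcal{Q}$ and $F_t$ playing the role of $f$. The domain condition $Y \small{\times} X^{\varepsilon} \subset \mathrm{dom}(f)$ is met because the induction hypothesis makes $\mathcal{Q}_{t+1}$ finite on $\mathcal{X}_t$, so $F_t$ is finite on $\mathcal{X}_t \small{\times} \mathcal{X}_{t-1}^{\varepsilon}$. For constraints of type $S1$ this is straightforward; for type $S2$, the Slater-type constraint qualification in Lemma \ref{dervaluefunction} must be verified at every $x_{t-1} \in \mathcal{X}_{t-1}$, and I would adapt verbatim the construction from Lemma \ref{boundcutcoeff}: take the convex combination $(1-\theta_0) (\bar x_{t t}, \bar x_{t\,t-1}) + \theta_0 (x_t^{\varepsilon}, x_{t-1}^{\varepsilon})$ provided by (H1)-(d) and (H1)-(e) to produce a strictly feasible Slater pair $(\tilde x_t, x_{t-1}) \in \mathrm{ri}(\mathcal{X}_t) \small{\times} \mathcal{X}_{t-1}$. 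Lemma \ref{dervaluefunction} then yields that $\bigcup_{x \in \mathcal{X}_{t-1}} \partial \mathcal{Q}_t(x)$ is bounded; since $\mathcal{Q}_t$ is convex on the convex set $\mathcal{X}_{t-1}$, uniformly bounded subgradients give Lipschitz continuity on $\mathcal{X}_{t-1}$.

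The main obstacle is really bookkeeping rather than a deep difficulty: one must check that the Slater condition of Lemma \ref{dervaluefunction} holds at every point of $\mathcal{X}_{t-1}$ (not merely at one trial point) so that the uniform boundedness of subgradients extends across the whole domain. Once the Slater point construction from Lemma \ref{boundcutcoeff} is seen to be independent of any particular iterate and valid for any $x_{t-1} \in \mathcal{X}_{t-1}$, everything else is standard convex analysis.
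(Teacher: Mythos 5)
Your proof is correct, and the convexity part (partial minimization of the jointly convex $F_t$ over a convex feasible region) is essentially what the paper does. Where you genuinely diverge is the Lipschitz part. The paper does not invoke Lemma \ref{dervaluefunction} at all: it only checks, exactly as you do for the domain condition, that $\mathcal{Q}_t$ is finite on a fattening $\mathcal{X}_{t-1}^{\varepsilon_0}$ of $\mathcal{X}_{t-1}$ (using continuity of $f_t$ on a neighborhood for $S1$, plus (H1)-(d) for $S2$), and then concludes directly from the standard fact that a convex function finite on a neighborhood of a compact convex set is Lipschitz there. Your route instead passes through the boundedness of $\cup_{x}\partial\mathcal{Q}_t(x)$ from Lemma \ref{dervaluefunction}; this works and has the side benefit of an explicit uniform subgradient bound, but it requires verifying more hypotheses. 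Note in particular that the Slater condition of Lemma \ref{dervaluefunction} as stated is a single existential statement over $X\small{\times}\mathrm{ri}(Y)$, which (H1)-(e) supplies directly, so the pointwise Slater construction you import from Lemma \ref{boundcutcoeff} is harmless but not actually needed to invoke that lemma; what you do still need, for nonemptiness of $\partial\mathcal{Q}_t$ at every point of $\mathcal{X}_{t-1}$, is precisely the "finite on a fattened set" observation, at which point the paper's shorter argument already finishes the job without the subdifferential machinery. Both proofs share the same mild implicit assumption that $f_t$ remains finite slightly outside $\mathcal{X}_t\small{\times}\mathcal{X}_{t-1}$.
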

\begin{proof} The proof is by backward induction on $t$. The result holds for $t=T+1$ by definition of $\mathcal{Q}_{T+1}$.
Let us now assume that $\mathcal{Q}_{t+1}$ is convex and Lipschitz continuous on 
$\mathcal{X}_{t}$ for some $t \in \{2,\ldots,T\}$. We consider two cases: $X_t$ is of type $S1$ (Case A) and
$X_t$ is of type $S2$ (Case B).

\par {\textbf{Case A.}} Convexity of $\mathcal{Q}_t$ immediately follows from (H1)-(a),(b).
(H1)-(b) implies that $f_t$ is continuous on the compact set $\mathcal{X}_t \small{\times} \mathcal{X}_{t-1}$
and therefore takes finite values on $\mathcal{X}_t \small{\times} \mathcal{X}_{t-1}$ but also on some
neighborhood $\mathcal{X}_t \small{\times} \mathcal{X}_{t-1}^{\varepsilon_0}$ of $\mathcal{X}_t \small{\times} \mathcal{X}_{t-1}$
with $\varepsilon_0>0$. Therefore, for every $x_{t-1} \in \mathcal{X}_{t-1}^{\varepsilon_0}$, we have that
$x_t \rightarrow f_t(x_t, x_{t-1} ) + \mathcal{Q}_{t+1}(x_t)$ is finite-valued on $\mathcal{X}_t$, and $\mathcal{Q}_t( x_{t-1} )$ is finite.
\par {\textbf{Case B.}} Convexity of $\mathcal{Q}_t$ immediately follows from (H1)-(a),(b), (c).
As in Case A, $f_t$ is finite valued on $\mathcal{X}_t \small{\times} \mathcal{X}_{t-1}^{\varepsilon_0}$ 
for some $\varepsilon_0>0$. Combining this observation with (H1)-(d), for every $x_{t-1} \in \mathcal{X}_t^{\min(\varepsilon_0, \varepsilon)}$
the function $x_t \rightarrow f_t(x_t, x_{t-1} ) + \mathcal{Q}_{t+1}(x_t)$ is finite-valued on the nonempty set $X_t( x_{t-1} )$
and therefore $\mathcal{Q}_t( x_{t-1}  )$ is finite.

In  both Cases (A) and (B) we checked that $\mathcal{X}_{t-1}$ is contained in the interior of the domain of $\mathcal{Q}_t$
which implies that convex function $\mathcal{Q}_t$ is Lipschitz continuous on $\mathcal{X}_{t-1}$.\hfill
\end{proof}
In view of Lemma \ref{convrecfuncQt}, we will denote by $L( \mathcal{Q}_t )$ a Lipschitz constant for $\mathcal{Q}_t$ for $t=2,\ldots,T+1$.\\

A useful ingredient for the convergence analysis of IDDP is the boundedness of the sequences of approximate
dual solutions $(\lambda_t^k, \mu_t^k)$. Recall that if $X_t$ is of type $S2$ then Slater constraint qualification
\eqref{slater} holds. 
From Theorem 2.3.2, p.312 in \cite{hhlem}, we deduce that if the rows of $A_t$ are independent then
the set of optimal dual solutions of problem \eqref{dualproblemS2} is bounded.
Therefore, the level set of $-h_{t, x_{t-1}^k }^k$ associated to its minimal value is bounded implying that the level set
associated to this minimal value plus $\varepsilon_t^k$
is bounded too (since for a convex function if a level set is bounded then all level
sets are bounded). It follows that 
if the rows of $A_t$ are independent, then
for every $k \in \mathbb{N}^*$ the norm $\|(\lambda_t^k, \mu_t^k)\|$ is finite.

To obtain an upper bound on the sequence $(\|(\lambda_t^k, \mu_t^k)\|)_{t k}$
we will use a slightly stronger assumption than (H1)-(e), namely we will assume:\\
\par (H2) For $t=2, \ldots, T$, there exists $\kappa_t>0, r_t>0$ such that for every $x_{t-1} \in \mathcal{X}_{t-1}$, 
there exists $x_t \in \mathcal{X}_t$ such that $\mathbb{B}(x_t, r_t) \cap \mbox{Aff}( \mathcal{X}_t ) \neq \emptyset$,
$A_t x_t + B_t x_{t-1}=b_t$, and for every $i=1,\ldots,p$, $g_{t i}( x_t, x_{t-1}) \leq -\kappa_t$. \\
\begin{rem} Of course, by definition of the relative interior, the condition 
$\mathbb{B}(x_t, r_t) \cap \mbox{Aff}( \mathcal{X}_t ) \neq \emptyset$ implies that
$x_t \in \mbox{ri}( \mathcal{X}_t )$.
\end{rem}
However, we do not assume that the rows of $A_t$ are independent. Using (H2) and
Section \ref{sec:boundingmulti} we can
now show that the sequences of cut coefficients and approximate dual solutions belong to a compact set:
\begin{prop} \label{propboundeddual} Assume that noises $(\varepsilon_t^k)_{k \geq 1}$ are bounded: for 
$t=2,\ldots,T$, we have $0 \leq \varepsilon_t^k \leq {\bar \varepsilon}_t< +\infty$.
If Assumptions (H1) and (H2) hold then the sequences 
$(\theta_{t}^k)_{t ,k}$, $(\eta_{t}^k (  \varepsilon_{t}^k ))_{t, k}$, $( \beta_t^k )_{t, k}$, 
$(\lambda_{t}^k )_{t, k}$, $( \mu_{t}^k )_{t, k}$
 generated by the IDDP algorithm are bounded: for $t=2,\ldots,T+1$, there exists
 a compact set $C_t$ such that the sequence  
 $(\theta_{t}^k , \eta_{t}^k (  \varepsilon_{t}^k ), \beta_t^k )_{k \geq 1}$ belongs to 
 $C_t$ and for $t=2,\ldots,T$, if $X_t$ is of type $S2$ then
 there exists
 a compact set $\mathcal{D}_t$ such that the sequence  
 $( \lambda_{t}^k , \mu_t^k )_{k \geq 1}$ belongs to 
 $\mathcal{D}_t$.
\end{prop}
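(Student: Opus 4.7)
My plan is to argue by backward induction on $t$, descending from $T+1$ down to $2$. The base case $t=T+1$ is immediate since $\mathcal{Q}_{T+1}\equiv 0$ and no cuts are generated at that stage. For the induction step, I assume the result holds for all stages $s>t$, so that the triples $(\theta_s^j,\eta_s^j(\varepsilon_s^j),\beta_s^j)$ lie in a compact set $C_s$ for all $j\geq 1$, and, when $X_s$ is of type $S2$, so do the dual pairs $(\lambda_s^j,\mu_s^j)\in\mathcal{D}_s$.

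The key preliminary is a uniform-in-$k$ two-sided bound on $\mathcal{Q}_{t+1}^k$ over $\mathcal{X}_t$. The upper bound follows from $\mathcal{Q}_{t+1}^k\leq\mathcal{Q}_{t+1}$ (cut validity) together with Lemma \ref{convrecfuncQt}. For the lower bound, each cut $\mathcal{C}_{t+1}^j$ is an affine function with coefficients in the compact $C_{t+1}$, evaluated at points of $\mathcal{X}_t$; thus $\mathcal{C}_{t+1}^j$ is uniformly bounded below on $\mathcal{X}_t$ by a constant depending only on $C_{t+1}$ and $\mbox{Diam}(\mathcal{X}_t)$, and $\mathcal{Q}_{t+1}^k$, being a pointwise max of such cuts, inherits this lower bound. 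Combined with the boundedness of $f_t$ on the compact set $\mathcal{X}_t\small{\times}\mathcal{X}_{t-1}$, this yields a constant $\mathcal{L}_t$ that lower bounds the optimal value ${\underline{\mathcal{Q}}}_t^k(x_{t-1}^k)$ of the backward problem \eqref{backward1terS2}, uniformly in $k$ and in $x_{t-1}^k\in\mathcal{X}_{t-1}$.

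For a stage where $X_t$ is of type $S2$, I then invoke Proposition \ref{propboundnormepsdualsol} applied to \eqref{backward1terS2}. Assumption (H2) supplies a Slater point with margin $\kappa_t$ and radius $r_t$ that do not depend on $x_{t-1}^k$; Lipschitz constants $L(f_t),L(g_t)$ exist on a compact neighborhood of $\mathcal{X}_t\small{\times}\mathcal{X}_{t-1}$ by (H1)-(b),(c) and the continuity of the gradients, and the Slater radius in the proposition can be taken as $\min(r_t,\kappa_t/(2L(g_t)))$. The matrix $[A_t\;0]$ and hence the parameter $\rho_*$ depend only on stage $t$, not on $k$. Using the lower bound $\mathcal{L}_t$ and the hypothesis $\varepsilon_t^k\leq{\bar\varepsilon}_t$, Proposition \ref{propboundnormepsdualsol} delivers a bound on $\|(\lambda_t^k,\mu_t^k)\|$ independent of $k$, so the sequence lies in a compact set $\mathcal{D}_t$.

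Finally, the coefficients $\theta_t^k,\beta_t^k,\eta_t^k(\varepsilon_t^k)$ are continuous functions, via the formulas of Subsections \ref{iddp3}--\ref{iddp4}, of the bounded quantities $x_t^{Bk}\in\mathcal{X}_t$, $x_{t-1}^k\in\mathcal{X}_{t-1}$, $y_t^{Bk}$, $(\lambda_t^k,\mu_t^k)\in\mathcal{D}_t$, and of the gradients $\nabla f_t,\nabla g_t$, which are themselves bounded on compact sets by (H1)-(b),(c); the scalar $y_t^{Bk}$ is bounded since at $\varepsilon_t^k$-near-optimality it is pinched between the uniform lower bound on $\mathcal{Q}_{t+1}^k(x_t^{Bk})$ and the upper bound ${\underline{\mathcal{Q}}}_t^k(x_{t-1}^k)+\varepsilon_t^k-f_t(x_t^{Bk},x_{t-1}^k)$, while $\eta_t^k(\varepsilon_t^k)$ is the value of a linear program over a compact set with bounded coefficients. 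This produces $C_t$ and closes the induction. The main obstacle I expect is ensuring the uniformity in $k$ of the lower bound $\mathcal{L}_t$, since it is what enables Proposition \ref{propboundnormepsdualsol} to yield a $k$-independent bound on the dual pair; it is also what forces the use of the strengthened assumption (H2) rather than (H1)-(e), because one needs the Slater margin $\kappa_t$ and the interior radius $r_t$ not to collapse as $x_{t-1}^k$ varies over $\mathcal{X}_{t-1}$.
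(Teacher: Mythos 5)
Your proposal is correct and follows essentially the same route as the paper: backward induction with the inductive compactness of the stage-$(t+1)$ cut coefficients supplying a uniform Lipschitz constant and two-sided bound for $\mathcal{Q}_{t+1}^k$, then Proposition \ref{propboundnormepsdualsol} (via Assumption (H2), with $\kappa_t$, $r_t$, and $\rho_*$ independent of $k$ and of $x_{t-1}^k$) to get a $k$-uniform bound on $(\lambda_t^k,\mu_t^k)$ in the $S2$ case, and finally boundedness of $\theta_t^k$, $\eta_t^k(\varepsilon_t^k)$, $\beta_t^k$ from the gradient bounds on compact sets. The only cosmetic difference is that you obtain the uniform lower bound on the approximate value functions from the compactness of the cut coefficients, while the paper uses monotonicity $\mathcal{Q}_{t+1}^1\leq\mathcal{Q}_{t+1}^k$ and the continuity of $\mathcal{Q}_{t+1}^1$; these are interchangeable.
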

\begin{proof} The proof is by backward induction on $t$. Our induction hypothesis 
$\mathcal{H}(t)$
for 
$t \in \{2,\ldots,T+1\}$ is that the sequence $(\theta_{t}^k , \eta_{t}^k (  \varepsilon_{t}^k ), \beta_t^k )_{k \geq 1}$ belongs to 
a compact set $C_t$. We have that $\mathcal{H}(T+1)$ holds because for $t=T+1$ the corresponding coefficients are all zero.
Now assume that $\mathcal{H}(T+1)$ holds for some $t \in \{2,\ldots,T+1\}$. We want to show that $\mathcal{H}(t)$
holds and if $X_t$ is of type $S2$ 
that the sequence  $( \lambda_{t}^k , \mu_t^k )_{k \geq 1}$ belongs to 
some compact set $\mathcal{D}_t$. Since $f_t$ and $g_t$ belong to  $\mathcal{C}^1( \mathcal{X}_t \small{\times} \mathcal{X}_{t-1} )$
we can find finite $m_t, M_{t 1}, M_{t 2}, M_{t 3}, M_{t 4}$ such that for every $x_t \in \mathcal{X}_t, x_{t-1} \in \mathcal{X}_{t-1}$, 
for every $i=1,\ldots,p$, we have
$$
m_t \leq f_t( x_t , x_{t-1} ) \leq M_{t 1},\; \| \nabla f_t ( x_t , x_{t-1})  \| \leq M_{t 2},\;\| \nabla g_{t i} ( x_t , x_{t-1} ) \| \leq M_{t 3},\;
\| g_t ( x_t , x_{t-1} ) \| \leq M_{t 4}.
$$
Also since $\mathcal{H}(t+1)$ holds, the sequence $(\|\beta_{t+1}^k\|)_{k \geq 1}$ is bounded
from above by, say, $L_{t+1}$, which is a Lipschitz constant for all
functions $(\mathcal{Q}_{t+1}^k )_{k \geq 1}$.

We now consider two cases: $X_t$ is of type $S1$ (Case A) and $X_t$ is of type $S2$ (Case B).

\par {\textbf{Case A.}} We have $\theta_t^k = f_t ( x_t^{B k} , x_{t-1}^k ) + \mathcal{Q}_{t+1}^k ( x_t^{B k} )$ which gives 
the bound
$$
m_t + \min_{x_t \in \mathcal{X}_t} \mathcal{Q}_{t+1}^1 ( x_t ) \leq \theta_t^k \leq M_{t 1} + \max_{x_t \in \mathcal{X}_t } \mathcal{Q}_{t+1}( x_t ),\;\forall  k \geq 1,
$$
(recall that due to $\mathcal{H}(t+1)$ and Lemma \ref{convrecfuncQt}, the minimum and maximum in the relation above
are well defined because functions $\mathcal{Q}_{t+1}^1$ and $\mathcal{Q}_{t+1}$ are continuous on the compact $\mathcal{X}_t$).

Now for $\eta_t^k( \varepsilon_t^k ) = \ell_{1 t}^{k}( x_t^{B  k} , y_t^{B  k},  x_{t-1}^k )$ and recalling definition
\eqref{defetaiddp0} of $\ell_{1 t}^{k}( x_t^{B  k} , y_t^{B  k},  x_{t-1}^k )$, we see that
\begin{equation}\label{boundetafirst}
0 \leq \eta_t^k( \varepsilon_t^k )  \leq {\bar \eta}_t := (M_{t 2} + L_{t+1}) D(\mathcal{X}_t),\;\forall \; k\geq 1,
\end{equation}
and of course the norm of $\beta_t^k=\nabla_{x_{t-1}} f_t( x_t^{B  k}, x_{t-1}^k )$ for all $k \geq 1$ is bounded from above by $M_{t 2}$.
This shows $\mathcal{H}(t)$ for Case A.

\par {\textbf{Case B.}}
We first obtain a bound on $\|(\lambda_t^k , \mu_t^k )\|$ using Proposition \ref{propboundnormepsdualsol}  
and Corollary \ref{corpropboundnormepsdualsol}. Let us check that the Assumptions of this corollary
are satisfied for problem \eqref{backward1terS2}: 
\begin{itemize}
\item[(i)] $\mathcal{X}_t$ is a closed convex set;
\item[(ii)] the objective function $F_t^k(\cdot, x_{t-1}^k )$ is bounded from above by
${\bar f}(\cdot)=f_t(\cdot, x_{t-1}^k ) + \mathcal{Q}_{t+1}( \cdot )$. Since 
$f_t$ is convex and finite in a neighborhood of $\mathcal{X}_t \small{\times} \mathcal{X}_{t-1}$,
it is Lipschitz continuous on $\mathcal{X}_t \small{\times} \mathcal{X}_{t-1}$ with Lipschitz constant,
say, $L(f_t)$. Therefore ${\bar f}$ is Lipschitz continuous with Lipschitz constant
$L(f_t)+L(\mathcal{Q}_{t+1})$ on $\mathcal{X}_t$.
\item[(iii)] Since all components of $g_t$ are convex and finite in a neighborhood of $\mathcal{X}_t \small{\times} \mathcal{X}_{t-1}$,
they are  Lipschitz continuous on $\mathcal{X}_t \small{\times} \mathcal{X}_{t-1}$.
\item[(iv)] The objective function is bounded on the feasible set 
by $\mathcal{L}= \displaystyle \min_{x_{t-1} \in \mathcal{X}_{t-1}} {\underline{\mathcal{Q}}}_t^1 ( x_{t-1} )$ (the minimum is well defined due to Assumption (H1)).
\end{itemize}
Due to Assumption (H2) we can find $\hat x_t^k \in \mbox{ri}( \mathcal{X}_t )$ such that $\hat x_t^k \in X_t( x_{t-1}^k )$ and
$\mathbb{B}_n( \hat x_t^k , r_t ) \cap \mbox{Aff}( \mathcal{X}_t ) \neq \emptyset$.
Therefore, reproducing the reasoning of Section \ref{sec:boundingmulti}, we can find
$\rho_t$ such that 
$$
\mathbb{B}_q( 0, \rho_t ) \cap A_t V_{\mathcal{X}_t}  \subseteq A_t \Big( \mathbb{B}_n( 0 , r_t )  \cap  V_{\mathcal{X}_t} \Big)
$$
where $V_{\mathcal{X}_t}$ is the vector space $V_{\mathcal{X}_t}=\{x-y,\;x,y \in \mbox{Aff}( \mathcal{X}_t )\}$
(this is relation \eqref{deductionfromSLrewr} for problem \eqref{backward1terS2}).
Applying Corollary \ref{corpropboundnormepsdualsol} to problem \eqref{backward1terS2} we deduce that
$\|(\lambda_t^k , \mu_t^k )\| \leq U_t $
where 
$$
U_t
= \frac{
(  L(f_t)   + L( \mathcal{Q}_{t+1} )  ) r_t + {\bar{\varepsilon}}_t +
\displaystyle  \max_{x_t \in \mathcal{X}_t, x_{t-1} \in \mathcal{X}_{t-1}}(f_t( x_t ,x_{t-1} ) + \mathcal{Q}_{t+1}(x_t))   -\displaystyle  \min_{x_{t-1} \in \mathcal{X}_{t-1} } {\underline{\mathcal{Q}}}_t^1 ( x_{t-1})        }  {\min(\rho_t, \frac{\kappa_t}{2})}.
$$
For 
$\theta_t^k = f_t (x_t^{B  k}, x_{t-1}^k) + \mathcal{Q}_{t+1}^k ( x_t^{B k}  )   + \langle \mu_t^k  ,  g_t( x_t^{B k}, x_{t-1}^k ) \rangle$
we get the bound 
$$
m_t -U_t M_{t 4} +  \min_{x_t \in \mathcal{X}_t} \mathcal{Q}_{t+1}^1 ( x_t ) \leq \theta_t^k \leq M_{t 1}+   \max_{x_t \in \mathcal{X}_t} \mathcal{Q}_{t+1} ( x_t ).
$$
Note that $\eta_{t}^k (  \varepsilon_{t}^k ) \geq 0$ and 
the objective function of problem \eqref{defetaiddp} written for $(\lambda, \mu)= (\lambda_t^k , \mu_t^k)$  with optimal value 
$\eta_{t}^k (  \varepsilon_{t}^k )$ is bounded from above on the feasible set by
\begin{equation}\label{boundetasecond} 
{\bar \eta}_t = \Big( M_{t 2} + \sqrt{2} \max( \|A_t^T\| , M_{t 3} \sqrt{p} ) U_t   + L_{t+1} \Big) D( \mathcal{X}_t  )
\end{equation}
and therefore the same upper bound holds for $\eta_{t}^k (  \varepsilon_{t}^k )$.
Finally, recalling definition \eqref{defbetatk} of $\beta_t^k$ we have:
\begin{equation}\label{formulaLt}
\| \beta_t^k \| \leq M_{t 2} + \Big[ \|B_t^T\| \|\lambda_t^k\|  +  M_{t 3} \sqrt{p} \| \mu_t^k \| \Big] \leq 
L_t := M_{t 2} + \sqrt{2} \max( \|B_t^T\| , M_{t 3} \sqrt{p} ) U_t, 
\end{equation}
which completes the proof and provides a Lipschitz constant
 $L_t$ valid for functions $(\mathcal{Q}_t^k)_k$.\hfill
\end{proof}

To show that the sequence of error terms $(\eta_t^k (  \varepsilon_t^k ))_k$
converges to 0 when $\lim_{k \rightarrow +\infty} \varepsilon_t^k = 0$,
we will make use of Propositions \ref{propvanish1} and \ref{propvanish1dual} which follow:
\begin{prop}\label{propvanish1} Let $X \subset \mathbb{R}^m, Y \subset \mathbb{R}^n$,  be two nonempty compact convex sets.
Let $f \in \mathcal{C}^1 (  Y\small{\times}X )$ be convex  on $Y\small{\times}X$.
Let $(\mathcal{Q}^k)_{k \geq 1}$ be a sequence of convex $L$-Lipschitz continuous functions
on $Y$ satisfying $\underline{\mathcal{Q}}  \leq \mathcal{Q}^k \leq {\bar{\mathcal{Q}}}$ on $Y$
where $\underline{\mathcal{Q}}, {\bar{\mathcal{Q}}}$ are continuous on $Y$.
Let $(x^k)_{k \geq 1}$ be a sequence in $X$, $(\varepsilon^k)_{k \geq 1}$ be a sequence of nonnegative real numbers, 
and let $y^k ( \varepsilon^k ) \in Y$ be an $\varepsilon^k$-optimal solution to 
\begin{equation}\label{defpblemmavanish1}
\inf \;\{f(y, x^k ) + \mathcal{Q}^k ( y) \;\;: \;\; y \in Y \} .
\end{equation}
Define
\begin{equation}\label{defpblemmavanish2}
\eta^k (  \varepsilon^k ) = \left\{ 
\begin{array}{l}
\max \; \langle \nabla_y f(y^k ( \varepsilon^k ) , x^k ) , y^k ( \varepsilon^k )  - y \rangle + \mathcal{Q}^k ( y^k ( \varepsilon^k  ) ) - \mathcal{Q}^k ( y ) \\
y \in Y.
\end{array}
\right.
\end{equation}
Then if $\lim_{k \rightarrow +\infty} \varepsilon^k = 0$ we have
\begin{equation}\label{vanishnoises1l}
\lim_{k \rightarrow +\infty} \eta^k (  \varepsilon^k ) = 0.
\end{equation}
\end{prop}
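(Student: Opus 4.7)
The plan is to establish the sandwich bound $0 \le \eta^k(\varepsilon^k) \le \varepsilon^k$, from which \eqref{vanishnoises1l} follows immediately by letting $k\to\infty$.

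For the lower bound, I would observe that $y=y^k(\varepsilon^k)$ is feasible in the maximization defining $\eta^k(\varepsilon^k)$ and makes the objective equal to $0$, which gives $\eta^k(\varepsilon^k)\ge 0$ at once.

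For the upper bound, the key ingredient is the gradient inequality coming from joint convexity of $f$ on $Y\times X$, specialized at the fixed first argument $x^k$: since $f(\cdot,x^k)$ is convex and differentiable on $Y$, for every $y\in Y$
\[
\langle \nabla_y f(y^k(\varepsilon^k),x^k),\,y^k(\varepsilon^k)-y\rangle \;\le\; f(y^k(\varepsilon^k),x^k)-f(y,x^k).
\]
Substituting this inequality into \eqref{defpblemmavanish2} and maximizing over $y\in Y$ yields
\[
\eta^k(\varepsilon^k) \;\le\; \bigl[f(y^k(\varepsilon^k),x^k)+\mathcal{Q}^k(y^k(\varepsilon^k))\bigr] \;-\; \min_{y\in Y}\bigl\{f(y,x^k)+\mathcal{Q}^k(y)\bigr\} \;\le\; \varepsilon^k,
\]
where the last inequality is exactly the $\varepsilon^k$-optimality of $y^k(\varepsilon^k)$ for problem \eqref{defpblemmavanish1}. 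Sending $k\to\infty$ and using $\varepsilon^k\to 0$ then gives $\eta^k(\varepsilon^k)\to 0$.

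There is essentially no obstacle: the argument is a two-line consequence of the subgradient inequality for the smooth convex summand and the definition of an $\varepsilon$-optimal solution. The hypotheses that the $\mathcal{Q}^k$ be uniformly Lipschitz on $Y$ and trapped between two continuous envelopes $\underline{\mathcal{Q}}$ and $\bar{\mathcal{Q}}$ do not enter the estimate; they serve only to guarantee that the maximum in \eqref{defpblemmavanish2} is attained and that \eqref{defpblemmavanish1} admits approximate minimizers. Notably, no constraint qualification is needed here because the only constraint in \eqref{defpblemmavanish1} is $y\in Y$; qualifications will be required only in the companion Proposition \ref{propvanish1dual} where the value functions also involve equality and inequality constraints depending on the parameter.
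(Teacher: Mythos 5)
Your proposed proof contains a fatal sign error: the gradient inequality for a convex differentiable function goes in the opposite direction from what you claim. Convexity of $f(\cdot,x^k)$ gives
$$
f(y,x^k)\;\geq\; f(y^k(\varepsilon^k),x^k)+\langle \nabla_y f(y^k(\varepsilon^k),x^k),\,y-y^k(\varepsilon^k)\rangle ,
$$
which upon rearrangement yields
$$
\langle \nabla_y f(y^k(\varepsilon^k),x^k),\,y^k(\varepsilon^k)-y\rangle \;\geq\; f(y^k(\varepsilon^k),x^k)-f(y,x^k),
$$
i.e.\ the linearization \emph{under}estimates $f$, so the inner product is bounded from \emph{below}, not from above, by the difference of function values. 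Substituting the correct inequality into \eqref{defpblemmavanish2} only reproduces the trivial lower bound $\eta^k(\varepsilon^k)\geq \big[f(y^k(\varepsilon^k),x^k)+\mathcal{Q}^k(y^k(\varepsilon^k))\big]-\min_{y\in Y}\{f(y,x^k)+\mathcal{Q}^k(y)\}\geq 0$; it gives no upper bound at all. The claimed sandwich $\eta^k(\varepsilon^k)\leq\varepsilon^k$ is in fact false: take $Y=[-1,1]$, $X=\{0\}$, $f(y,x)=y^2$, $\mathcal{Q}^k\equiv 0$, $\varepsilon^k=1/k$ and $y^k(\varepsilon^k)=1/\sqrt{k}$ (which is $\varepsilon^k$-optimal since the optimal value is $0$); then $\eta^k(\varepsilon^k)=\tfrac{2}{\sqrt{k}}(\tfrac{1}{\sqrt{k}}+1)=\tfrac{2}{k}+\tfrac{2}{\sqrt{k}}\gg \varepsilon^k$. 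So $\eta^k$ can be of order $\sqrt{\varepsilon^k}$, and no inequality of the form $\eta^k\leq C\varepsilon^k$ can hold.

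The remark that the uniform Lipschitz constant $L$ and the envelopes $\underline{\mathcal{Q}}\leq\mathcal{Q}^k\leq\bar{\mathcal{Q}}$ "do not enter the estimate" should have been a warning sign: they are precisely what the paper's proof needs. The statement is true but genuinely harder than a two-line computation, because the functions $\mathcal{Q}^k$ change with $k$. The paper argues by contradiction: assuming $\eta^{\sigma_1(k)}(\varepsilon^{\sigma_1(k)})\geq\varepsilon_0>0$ along a subsequence, it uses the uniform Lipschitz bound and the continuous envelopes to invoke Arzel\`a--Ascoli and extract a uniformly convergent subsequence $\mathcal{Q}^{\sigma(k)}\to\mathcal{Q}^*$, passes to the limit to show that the limit $\bar y$ of the $y^{\sigma(k)}$ is optimal for $\min_{y\in Y}f(y,x_*)+\mathcal{Q}^*(y)$, and then contradicts the first-order optimality condition $\langle\nabla_y f(\bar y,x_*),y-\bar y\rangle+\mathcal{Q}^*(y)-\mathcal{Q}^*(\bar y)\geq 0$ at $y=\tilde y^{\sigma(k_0)}$. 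A correct proof must follow some such compactness route; the direct estimate you propose cannot be repaired.
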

\begin{proof}
In what follows, to simplify notation, we write $y^k$ instead of $y^k ( \varepsilon^k )$.
We show \eqref{vanishnoises1l} by contradiction.
Denoting by $y_*^{k} \in  Y $ an optimal solution of \eqref{defpblemmavanish1}, we have for every $k \geq 1$ that
\begin{equation}\label{defepstkredefl1}
f( y_*^{k}, x^k ) + \mathcal{Q}^k ( y_*^{k} ) \leq 
f( y^{k}, x^k ) + \mathcal{Q}^k ( y^{k} ) \leq f( y_*^{k} , x^k ) + \mathcal{Q}^k ( y_*^{k} ) + \varepsilon^k.
\end{equation}
Denoting by ${\tilde y}^k  \in Y$ an optimal solution of optimization problem \eqref{defpblemmavanish2}  we get 
\begin{equation}\label{defetaiddp3}
\eta^k (  \varepsilon^k   )= \langle \nabla_y f(y^k  , x^k ) , y^k   - {\tilde y}^k \rangle + \mathcal{Q}^k ( y^k ) - \mathcal{Q}^k ( {\tilde y}^k ).
\end{equation}
Assume that \eqref{vanishnoises1l} does not hold. Then 
since $\eta^k (  \varepsilon^k   ) \geq 0$ there exists $\varepsilon_0 >0$ and $\sigma_1: \mathbb{N} \rightarrow \mathbb{N}$ increasing such that
for every $k \in \mathbb{N}$ we have
\begin{equation}\label{contradeta}
\eta^{\sigma_1(k)} (  \varepsilon^{\sigma_1(k)}   ) = \langle \nabla_{y} f(y^{\sigma_1(k)}, x^{ \sigma_1(k) } ),  -{\tilde y}^{ \sigma_1(k) }  + y^{\sigma_1(k)}  \rangle  + 
\mathcal{Q}^{\sigma_1(k)}( y^{\sigma_1(k)}  ) - \mathcal{Q}^{\sigma_1(k)}( {\tilde y}^{\sigma_1(k)}  ) \geq \varepsilon_0. 
\end{equation}
Now observe that the sequence $( \mathcal{Q}^{\sigma_1(k)} )_k$ in $\mathcal{C}( Y )$
\begin{itemize}
\item[(i)] is bounded: for every $k \geq 1$, for every $y \in Y$, we have
$$
-\infty < \min_{y \in Y} \underline{\mathcal{Q}}( y ) \leq \mathcal{Q}^{\sigma_1(k)}( y ) \leq \max_{y \in Y}   {\bar{\mathcal{Q}}}( y ) < +\infty;
$$
\item[(ii)] is equicontinuous since functions $(\mathcal{Q}^{\sigma_1(k)})_k$ are Lipschitz continous with Lipschitz constant $L$.
\end{itemize}
Therefore using the Arzel\`a-Ascoli theorem, this sequence has a uniformly convergent subsequence: there exists
$\mathcal{Q}^* \in \mathcal{C}( Y )$ and $\sigma_2: \mathbb{N} \rightarrow \mathbb{N}$ increasing 
such that setting $\sigma =\sigma_1 \circ \sigma_2$, we have
$\lim_{k \rightarrow +\infty} \|\mathcal{Q}^{\sigma(k)} - \mathcal{Q}^*  \|_{Y} = 0$.
Since $({y}^{\sigma(k)}, y_*^{\sigma(k)}, {\tilde y}^{ \sigma(k) }, x^{ \sigma(k) } )_{k \geq 1}$ is a sequence of the compact set 
$Y \small{\times} Y \small{\times} Y \small{\times} X$, taking further a subsequence if needed,
we can assume that $({y}^{\sigma(k)}, y_*^{\sigma(k)}, {\tilde y}^{ \sigma(k) }, x^{ \sigma(k) } )$ converges
to some $({\bar y} , y_* , {\tilde y} , x_* ) \in Y \small{\times} Y \small{\times} Y \small{\times} X$.
By continuity arguments, for $k$ sufficiently large, say $k \geq k_0$, we have that
\begin{equation}\label{gradlipetaproof}
\begin{array}{l}
|\langle \nabla_{y} f({y}^{\sigma(k)}, x^{ \sigma(k) } ),  -{\tilde y}^{ \sigma(k) }  + y^{\sigma(k)}  \rangle - \langle \nabla_{y} f({{\bar y}} , x_* ),-{\tilde y}^{\sigma(k)}   + {\bar y}  \rangle | \leq \varepsilon_0 /4,\\
\|y^{\sigma(k)} - {\bar y}\| \leq \frac{\varepsilon_0}{8 L},\;\|\mathcal{Q}^{\sigma(k)} - \mathcal{Q}^*  \|_{Y} \leq \varepsilon_0/16.
\end{array}
\end{equation}
It follows that
\begin{equation}\label{finaleq1}
\begin{array}{l}
\langle \nabla_{y} f({{\bar y}}, x_* ),  -{\tilde y}^{ \sigma(k_0 ) }  
+ {\bar y}  \rangle  +  \mathcal{Q}^{*}( {\bar y}  ) - \mathcal{Q}^{*}( \tilde y^{\sigma(k_0 )}  )\\
= \langle \nabla_{y} f({y}^{\sigma(k_0)}, x^{ \sigma(k_0) } ),  -{\tilde y}^{ \sigma(k_0) }  + y^{\sigma(k_0)}  \rangle  
+ \mathcal{Q}^{\sigma(k_0)}( y^{\sigma(k_0)}  ) - \mathcal{Q}^{\sigma(k_0)}( \tilde y^{\sigma(k_0)}  ) \\
\;\;\;+  \langle \nabla_{y} f( {\bar y}, x_* ),  -{\tilde y}^{ \sigma(k_0 ) }  + {\bar y}  \rangle - 
\langle \nabla_{y} f({y}^{\sigma(k_0)}, x^{ \sigma(k_0) } ),  -{\tilde y}^{ \sigma(k_0) }  + y^{\sigma(k_0)}  \rangle \\
\;\;\;+ [ \mathcal{Q}^{*}( {\bar y}  ) - \mathcal{Q}^{\sigma( k_0 )}( {\bar y}  )+ \mathcal{Q}^{\sigma( k_0 )}( {\bar y}  ) -  \mathcal{Q}^{\sigma(k_0)}( y^{\sigma(k_0)}  )  ] \\
\;\;\;- [ \mathcal{Q}^{*}( {\tilde y}^{\sigma(k_0 )})  - \mathcal{Q}^{\sigma(k_0)}( {\tilde y}^{\sigma(k_0)}  ) ],\\
\geq \varepsilon_0 - \frac{\varepsilon_0}{4} - 2 \|\mathcal{Q}^{*} - \mathcal{Q}^{\sigma(k_0)} \|_{Y} - L \| {\bar y} - y^{\sigma(k_0)}\|  \geq \frac{\varepsilon_0}{2}>0,
\end{array}
\end{equation}
where for the last two inequalities we have used \eqref{contradeta} and \eqref{gradlipetaproof}.

Recalling the definition of $y_*^{k}$, for every $k \geq 1$ we have that $y_*^{\sigma(k)}  \in Y$ and
$$
f( y_*^{\sigma(k)}, x^{\sigma(k)} ) + \mathcal{Q}^{\sigma(k)} ( y_*^{\sigma(k)} ) \leq f( y, x^{\sigma(k)} ) + \mathcal{Q}^{\sigma(k)} ( y ),  \;\;\forall y \in Y.
$$
Taking the limit as $k \rightarrow +\infty$ in the above inequality we get (using the continuity of $f$) 
$$
f_* :=f( y_{*}, x_* ) + \mathcal{Q}^{*} ( y_{*} ) \leq f( y , x_* ) + \mathcal{Q}^{*} ( y ),  \;\;\forall y \in Y.
$$
Since $y_* \in Y$,we have shown that  $y_*$ is an optimal solution for the optimization problem
\begin{equation}\label{optproblemxtsxtB}
f_* = \left\{
\begin{array}{l}
\min f( y, x_* ) + \mathcal{Q}^* ( y ) \\
y \in Y.
\end{array}
\right.
\end{equation}
Replacing $k$ by $\sigma(k)$ in \eqref{defepstkredefl1} and taking the limit as $k \rightarrow +\infty$, we obtain
$$
f_* = f( y_{*}, x_{*} ) + \mathcal{Q}^{*} ( y_{*} ) = f( {\bar y} , x_{*} ) + \mathcal{Q}^{*} ( {\bar y} ).
$$
Combining this observation with the fact that ${\bar y} \in Y$,
we deduce that ${\bar y}$ is also an optimal solution of \eqref{optproblemxtsxtB}. 
Next, since all functions $(\mathcal{Q}^{\sigma(k)})_k$ are convex on $Y$, the function 
$\mathcal{Q}^*$ is convex on $Y$ too.
Recalling Lemma \ref{optcondonediffonenotdiff},
the optimality conditions for ${\bar y}$ read
$$
\langle \nabla_{y} f({\bar y}, x_{*} ), y - {\bar y}   \rangle + \mathcal{Q}^* ( y  )   - \mathcal{Q}^* ( {\bar y} ) \geq 0,\;\forall \;y \in Y. 
$$
Since $\tilde y^{\sigma( k_0 )} \in Y$, we have in particular
$$
\langle \nabla_{y} f({\bar y} , x_{*} ), \tilde y^{\sigma( k_0 )} - {\bar y}   \rangle + \mathcal{Q}^* ( {\tilde y}^{\sigma( k_0 )} )   - \mathcal{Q}^* ( {\bar y} ) \geq 0. 
$$
However, from \eqref{finaleq1}, the left-hand side of the above inequality is $\leq -\frac{\varepsilon_0}{2}<0$ which yields the desired contradiction.\hfill 
\end{proof}

\begin{prop}\label{propvanish1dual} Let $Y \subset \mathbb{R}^n, X \subset \mathbb{R}^m$,  be two nonempty compact convex sets.
Let $f \in \mathcal{C}^1 (  Y\small{\times}X )$ be convex on $Y\small{\times}X$.
Let $(\mathcal{Q}^k)_{k \geq 1}$ be a sequence of convex $L$-Lipschitz continuous functions
on $Y$ satisfying $\underline{\mathcal{Q}}  \leq \mathcal{Q}^k \leq {\bar{\mathcal{Q}}}$ on $Y$
where $\underline{\mathcal{Q}}, {\bar{\mathcal{Q}}}$ are continuous on $Y$.
Let $g \in \mathcal{C}^1( Y \small{\times} X  )$ with components 
$g_i, i=1,\ldots,p$, 
convex on $Y\small{\times}X^{\varepsilon}$ for some $\varepsilon>0$. We also assume 
$$
(H):\;\exists \kappa >0,\;r>0,\;\mbox{ such that }\;\forall x \in X \;\exists y \in Y :\,\mathbb{B}_n( y, r ) \cap \emph{Aff}( Y) \neq \emptyset,\;A y + B x = b,\;g(y, x) < -\kappa {\textbf{e}},
$$
where ${\textbf{e}}$ is a vector of ones of size $p$.
Let $(x^k)_{k \geq 1}$ be a sequence in $X$, let $(\varepsilon^k)_{k \geq 1}$ be a sequence of nonnegative real numbers, 
and let $y^k ( \varepsilon^k )$ be an $\varepsilon^k$-optimal and feasible solution to 
\begin{equation}\label{defpblemmavanish4}
\inf \;\{f(y, x^k ) + \mathcal{Q}^k ( y) \;\;: \;\; y \in Y, \;Ay + B x^k = b, \;g(y, x^k) \leq 0 \} .
\end{equation}
Let $(\lambda^k(  \varepsilon^k ), \mu^k ( \varepsilon^k ) )$ be an $\varepsilon^k$-optimal solution to the dual
problem 
\begin{equation}\label{dualpropconv0etak}
\begin{array}{l}
\sup_{\lambda, \mu} \;h_{x^k}^k ( \lambda, \mu ) \\
\lambda = A y + B x^k -b ,\;y \in \emph{Aff}(Y),\;\mu \geq 0,
\end{array}
\end{equation}
where 
$$
h_{x^k}^k ( \lambda, \mu )=\displaystyle \inf_{y \in Y} \{f(y, x^k ) + \mathcal{Q}^k ( y) + \langle \lambda , A y + B x^k -b \rangle +  \langle \mu , g(y, x^k ) \rangle \}.
$$
Define $\eta^k (  \varepsilon^k )$ as the optimal value of the following optimization problem:
\begin{equation}\label{defpblemmavanish5}
\begin{array}{l}
\max \; \left \langle \nabla_y f(y^k ( \varepsilon^k ), x^k ) +A^T \lambda^k (  \varepsilon^k  )  +\displaystyle \sum_{i=1}^p \mu^k(\varepsilon^k )( i ) \nabla_{y} g_i( y^k ( \varepsilon^k )  , x^k ) , y^k ( \varepsilon^k )  - y \right \rangle + \mathcal{Q}^k ( y^k ( \varepsilon^k  ) ) - \mathcal{Q}^k ( y ) \\
y \in Y.
\end{array}
\end{equation}
Then if $\lim_{k \rightarrow +\infty} \varepsilon^k = 0$ we have
\begin{equation}\label{vanishnoises5}
\lim_{k \rightarrow +\infty} \eta^k (  \varepsilon^k ) = 0.
\end{equation}
\end{prop}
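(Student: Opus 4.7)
The plan is to follow the scheme of Proposition \ref{propvanish1} but with the extra ingredient of controlling the dual multipliers via the Slater-type hypothesis (H) and Section \ref{sec:boundingmulti}. I proceed by contradiction: assume there exist $\varepsilon_0>0$ and an increasing $\sigma_1:\mathbb{N}\to\mathbb{N}$ with $\eta^{\sigma_1(k)}(\varepsilon^{\sigma_1(k)})\geq \varepsilon_0$ for all $k$. Denote by $\tilde y^k\in Y$ an optimizer of \eqref{defpblemmavanish5} and by $y_*^k$ an optimal solution of \eqref{defpblemmavanish4}.

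First I would establish that all relevant sequences lie in compact sets. The sequences $(y^{\sigma_1(k)}), (\tilde y^{\sigma_1(k)}), (y_*^{\sigma_1(k)})$ live in the compact $Y$ and $(x^{\sigma_1(k)})$ in the compact $X$. For the sequence $(\mathcal{Q}^{\sigma_1(k)})$ in $\mathcal{C}(Y)$, the uniform bounds $\underline{\mathcal{Q}}\leq \mathcal{Q}^k \leq \bar{\mathcal{Q}}$ together with the common Lipschitz constant $L$ make the family bounded and equicontinuous, so Arzelà--Ascoli gives a uniformly convergent subsequence to some $\mathcal{Q}^*\in\mathcal{C}(Y)$, which is convex as a uniform limit of convex functions. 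For the dual multipliers, hypothesis (H) is exactly the assumption needed to invoke Corollary \ref{corpropboundnormepsdualsol} applied to problem \eqref{defpblemmavanish4} written at $x^k$ (with upper bound $\bar f(y)=f(y,x^k)+\bar{\mathcal{Q}}(y)$, which is Lipschitz with a constant independent of $k$, and lower bound $\mathcal L=\min_{y\in Y} (f(y,\cdot)+\underline{\mathcal{Q}}(y))$ over $X\times Y$). This yields a bound on $\|(\lambda^k(\varepsilon^k),\mu^k(\varepsilon^k))\|$ uniform in $k$, since $\kappa$, $r$, and the Lipschitz constants are fixed. Passing to a further subsequence $\sigma=\sigma_1\circ\sigma_2$, I can assume that $(y^{\sigma(k)},y_*^{\sigma(k)},\tilde y^{\sigma(k)},x^{\sigma(k)},\lambda^{\sigma(k)},\mu^{\sigma(k)})$ converges to some $(\bar y, y_*, \tilde y, x_*, \lambda_*,\mu_*)$ with $\mu_*\geq 0$.

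Next I identify the limit problem. By continuity of $f$ and uniform convergence of $\mathcal{Q}^{\sigma(k)}$, passing to the limit in the $\varepsilon^{\sigma(k)}$-optimality of $y_*^{\sigma(k)}$ shows that $y_*$ solves
\begin{equation*}
\inf\{f(y,x_*)+\mathcal{Q}^*(y): y\in Y,\,Ay+Bx_*=b,\,g(y,x_*)\leq 0\},
\end{equation*}
while passing to the limit in \eqref{defepstkredefl1} (the analogue for the constrained problem) combined with primal feasibility of $y^{\sigma(k)}$ and vanishing $\varepsilon^{\sigma(k)}$ forces $\bar y$ to be optimal for this same limit problem and feasible. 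Hypothesis (H) transmits to the limit problem (take the $y$ guaranteed by (H) for $x=x_*$), so Slater holds there and $(\lambda_*,\mu_*)$ is a dual optimum at $x_*$ (being the limit of $\varepsilon^{\sigma(k)}$-optimal dual solutions of problems whose optimal values converge, by strong duality, to the common limit value). Thus $(\bar y, \lambda_*,\mu_*)$ is a primal-dual pair at $x_*$, yielding KKT stationarity in the form
\begin{equation*}
\left\langle \nabla_y f(\bar y,x_*)+A^T\lambda_*+\sum_{i=1}^p \mu_*(i)\nabla_y g_i(\bar y,x_*),\,y-\bar y\right\rangle + \mathcal{Q}^*(y)-\mathcal{Q}^*(\bar y)\geq 0
\end{equation*}
for every $y\in Y$ (this is the subdifferential inclusion for the convex nonsmooth problem $\min_{y\in Y} L_{x_*}(y,\lambda_*,\mu_*)+\mathcal{Q}^*(y)$, of which $\bar y$ is a minimizer by complementary slackness and strong duality).

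Finally, plugging $y=\tilde y^{\sigma(k_0)}$ into this inequality and comparing with
\begin{equation*}
\eta^{\sigma(k_0)}(\varepsilon^{\sigma(k_0)})=\left\langle \nabla_y f(y^{\sigma(k_0)},x^{\sigma(k_0)})+A^T\lambda^{\sigma(k_0)}+\sum_i\mu^{\sigma(k_0)}(i)\nabla_y g_i(y^{\sigma(k_0)},x^{\sigma(k_0)}),\,y^{\sigma(k_0)}-\tilde y^{\sigma(k_0)}\right\rangle+\mathcal{Q}^{\sigma(k_0)}(y^{\sigma(k_0)})-\mathcal{Q}^{\sigma(k_0)}(\tilde y^{\sigma(k_0)})\geq \varepsilon_0,
\end{equation*}
I absorb the differences of gradients, $\lambda$'s, $\mu$'s, $\mathcal{Q}$'s, and of $y^{\sigma(k_0)}$ versus $\bar y$ into terms bounded in absolute value by $\varepsilon_0/2$ for $k_0$ large (continuity of $\nabla f$, $\nabla g_i$ on the compact $Y\times X$, uniform convergence of $\mathcal{Q}^{\sigma(k)}$, and Lipschitz continuity of $\mathcal{Q}^*$). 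This produces a strictly positive lower bound contradicting the KKT inequality evaluated at $\tilde y^{\sigma(k_0)}$, establishing \eqref{vanishnoises5}.

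The main obstacle is the third paragraph: ensuring the limit multipliers $(\lambda_*,\mu_*)$ are genuinely dual optimal for the limit problem, which requires both the uniform bound from Corollary \ref{corpropboundnormepsdualsol} (so a convergent subsequence exists) and the transfer of Slater's condition to the limit (so strong duality holds and the KKT subdifferential characterization is available).
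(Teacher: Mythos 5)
Your overall scheme — contradiction, Arzel\`a--Ascoli for $(\mathcal{Q}^k)$, boundedness of the multipliers via (H) and Proposition \ref{propboundnormepsdualsol}, extraction of convergent subsequences, and a final comparison between a limiting first-order condition at $\bar y$ and the persistent lower bound $\varepsilon_0$ — is the same as the paper's. The divergence, and the gap, is in how you reach the key inequality
$$
\left\langle \nabla_y f(\bar y,x_*)+A^T\lambda_*+\sum_{i=1}^p \mu_*(i)\nabla_y g_i(\bar y,x_*),\,y-\bar y\right\rangle + \mathcal{Q}^*(y)-\mathcal{Q}^*(\bar y)\geq 0\qquad\forall\, y\in Y.
$$
You route it through a ``limit problem'' with constraints $Ay+Bx_*=b$, $g(y,x_*)\le 0$: you assert that $y_*$ and $\bar y$ are primal optimal for it by ``passing to the limit in the $\varepsilon^k$-optimality'', and that $(\lambda_*,\mu_*)$ is dual optimal because it is a limit of $\varepsilon^{\sigma(k)}$-optimal dual solutions of problems ``whose optimal values converge to the common limit value''. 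Neither step is a routine limit passage, because the feasible set of \eqref{defpblemmavanish4} moves with $x^k$: a point $y$ feasible for the limit problem is in general infeasible for the problem at $x^{\sigma(k)}$, so the optimality of $y_*^{\sigma(k)}$ gives no comparison against such $y$; and the convergence of the optimal values to the optimal value of the limit problem is exactly a value-function continuity statement that requires its own quantitative, Slater-based stability argument, which you do not supply. You correctly flag this as ``the main obstacle'', but the uniform multiplier bound and the transfer of Slater's condition to $x_*$ do not by themselves close it.

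The paper avoids the limit problem entirely. Writing $L_{x}(y,\lambda,\mu)=f(y,x)+\mathcal{Q}^{\sigma(k)}(y)+\langle\lambda,Ay+Bx-b\rangle+\langle\mu,g(y,x)\rangle$, primal feasibility of $y^{\sigma(k)}$ together with $\mu^{\sigma(k)}\ge 0$, the $\varepsilon^{\sigma(k)}$-optimality of both iterates, and the absence of a duality gap give, for \emph{every} $y\in Y$,
$$
L_{x^{\sigma(k)}}(y^{\sigma(k)},\lambda^{\sigma(k)},\mu^{\sigma(k)})\;\le\; f(y_*^{\sigma(k)},x^{\sigma(k)})+\mathcal{Q}^{\sigma(k)}(y_*^{\sigma(k)})+\varepsilon^{\sigma(k)}\;\le\; h_{x^{\sigma(k)}}^{\sigma(k)}(\lambda^{\sigma(k)},\mu^{\sigma(k)})+2\varepsilon^{\sigma(k)}\;\le\; L_{x^{\sigma(k)}}(y,\lambda^{\sigma(k)},\mu^{\sigma(k)})+2\varepsilon^{\sigma(k)}.
$$
Since this holds over the fixed compact set $Y$, passing to the limit is immediate and shows that $\bar y$ minimizes $y\mapsto f(y,x_*)+\mathcal{Q}^*(y)+\langle\lambda_*,Ay+Bx_*-b\rangle+\langle\mu_*,g(y,x_*)\rangle$ over $Y$; Lemma \ref{optcondonediffonenotdiff} then yields the displayed first-order condition with no appeal to dual optimality of $(\lambda_*,\mu_*)$, to strong duality for the limit problem, or to complementary slackness. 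Replace your third paragraph by this saddle-point argument and the remainder of your proof goes through as in the paper.
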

\begin{proof}
For simplicity, we write $\lambda^k, \mu^k, y^k $ instead of 
$\lambda^k(  \varepsilon^k ), \mu^k ( \varepsilon^k ) ), y^k ( \varepsilon^k )$, and 
put $\mathcal{Y}(x)=\{y \in Y: \;Ay + B x = b,\;g(y,x) \leq 0\}$.
Denoting by $y_*^{k} \in \mathcal{Y}( x^k ) $ an optimal solution of \eqref{defpblemmavanish4}, we get
\begin{equation}\label{defepstkredef}
f( y_*^{k}, x^k ) + \mathcal{Q}^k ( y_*^{k} ) \leq 
f( y^{k}, x^k ) + \mathcal{Q}^k ( y^{k} ) \leq f ( y_*^{k} , x^k ) + \mathcal{Q}^k ( y_*^{k} ) + \varepsilon^k.
\end{equation}
We prove \eqref{vanishnoises5} by contradiction. 
Let ${\tilde y}^k$ be an optimal solution of \eqref{defpblemmavanish5}:
$$
\eta^k ( \varepsilon^k )=\langle \nabla_{y} f( y^{k}, x^k ) + 
A^T \lambda^k + \sum_{i=1}^p \mu^k ( i) \nabla_{y} g_{i}( y^{k}, x^k ),y^{k} - {\tilde y}^k   \rangle -\mathcal{Q}^k( {\tilde y}^k ) + \mathcal{Q}^k( y^{k} ).
$$
Assume that \eqref{vanishnoises5} does not hold. Then 
there exists $\varepsilon_0 >0$ and $\sigma_1: \mathbb{N} \rightarrow \mathbb{N}$ increasing such that
for every $k \in \mathbb{N}$ we have
\begin{equation}\label{contradeta22}
\begin{array}{l}
\left \langle \nabla_{y} f({y}^{\sigma_1(k)}, x^{ \sigma_1(k) } )  +  A^T \lambda^{\sigma_1(k)} + 
\sum_{i=1}^p \mu^{\sigma_1(k)} ( i) \nabla_{y} g_{i}( y^{\sigma_1(k)}, x^{\sigma_1(k)} ),  -{\tilde y}^{ \sigma_1(k) }  + y^{\sigma_1(k)} \right  \rangle \\ 
+ \mathcal{Q}^{\sigma_1(k)}( y^{\sigma_1(k)}  ) - \mathcal{Q}^{\sigma_1(k)}( {\tilde y}^{\sigma_1(k)}  ) \geq \varepsilon_0. 
\end{array}
\end{equation}
Using Assumption (H) and Proposition \ref{propboundnormepsdualsol}, we obtain that
the sequence $(\lambda^{\sigma_1(k)}, \mu^{\sigma_1(k)})_k$ is a sequence of a compact set, say $\mathcal{D}$.
Therefore, same as in the proof of Proposition \ref{propvanish1}, we can find
$\mathcal{Q}^* \in \mathcal{C}( Y )$ and $\sigma_2: \mathbb{N} \rightarrow \mathbb{N}$ increasing 
such that setting $\sigma =\sigma_1 \circ \sigma_2$, we have
$\lim_{k \rightarrow +\infty} \|\mathcal{Q}^{\sigma(k)} - \mathcal{Q}^*  \|_{ Y } = 0$,
and $({y}^{\sigma(k)}, y_*^{\sigma(k)}, {\tilde y}^{ \sigma(k) }, x^{ \sigma(k) }, \lambda^{ \sigma(k) }, \mu^{ \sigma(k) } )$ converges
to some $({\bar y} , y_* , {\tilde y} , x_*, \lambda_*, \mu_* ) \in Y \small{\times} Y \small{\times} Y \small{\times} X \small{\times} \mathcal{D}$.
It follows that there is $k_0 \in \mathbb{N}$ such that for every $k \geq k_0$:
\begin{equation}\label{contradeta23}
\begin{array}{l}
\left| \left \langle \nabla_{y} f({y}^{\sigma(k)}, x^{ \sigma(k) } )  + A^T \lambda^{\sigma(k)} +  \sum_{i=1}^p \mu^{\sigma(k)} ( i) \nabla_{y} g_{i}( y^{\sigma(k)}, x^{\sigma(k)} ),  -{\tilde y}^{ \sigma(k) }  + y^{\sigma(k)} \right  \rangle \right. \\
\;\; \left. - \left \langle \nabla_{y} f({\bar y}, x_* )  + A^T \lambda_{*} + \sum_{i=1}^p \mu_{*} ( i) \nabla_{y} g_{i}( {\bar y},  x_{*} ), -{\tilde y}^{ \sigma(k) }  + {\bar y} \right  \rangle  \right| \leq \varepsilon_0/4, \\
\|{y}^{\sigma(k)} - {\bar y}\| \leq \frac{\varepsilon_0}{8 L},\;\|\mathcal{Q}^{\sigma(k)} - \mathcal{Q}^*  \|_{Y} \leq \varepsilon_0/16.
\end{array}
\end{equation}
Same as in the proof of Lemma \ref{propboundeddual}, we deduce from \eqref{contradeta22}, \eqref{contradeta23} that
\begin{equation}\label{caseBposcontrad}
\left \langle \nabla_{y} f({\bar y}, x_{* } )  + A^T \lambda^{*} + 
\sum_{i=1}^p \mu^{*} ( i) \nabla_{y} g_{i}( {\bar y}, x_{*} ), -{\tilde y}^{ \sigma(k_0 ) }  + {\bar y} \right  \rangle +  \mathcal{Q}^{*}( {\bar y}  ) - \mathcal{Q}^{*}( {\tilde y}^{\sigma(k_0 )}  ) \geq \varepsilon_0/2 >0.
\end{equation}
Due to Assumption (H), primal problem \eqref{defpblemmavanish4} and dual problem \eqref{dualpropconv0etak} have the same
optimal value and for every $y \in Y$ and $k \geq 1$ we have:
$$
\begin{array}{l}
 f({y}^{\sigma(k)}, x^{{\sigma(k)}} ) + \mathcal{Q}^{\sigma(k)}( y^{{\sigma(k)}}  ) + 
 \langle A y^{\sigma(k)} + B x^{\sigma(k)} - b ,   \lambda^{\sigma(k)} \rangle + \langle \mu^{\sigma(k)} ,  g( y^{\sigma(k)} , x^{\sigma(k)}) \rangle  \\
\leq  f ( y_*^{\sigma(k)} , x^{\sigma(k)} ) + \mathcal{Q}^{\sigma(k)} ( y_*^{\sigma(k)} )  + \varepsilon^{\sigma(k)} \;\;\mbox{by definition of }y_*^{\sigma(k)}, y^{\sigma(k)}\mbox{ and since }\mu^{\sigma(k)} \geq 0, y^{\sigma(k)} \in \mathcal{Y}( x^{\sigma(k)} ),\\
\leq h_{x^{\sigma(k)}}^{\sigma(k)} (\lambda^{\sigma(k)} , \mu^{\sigma(k)} ) + 2 \varepsilon^{\sigma(k)}, [(\lambda^{\sigma(k)} , \mu^{\sigma(k)})\mbox{ is an }\epsilon^{\sigma(k)}\mbox{-optimal dual solution and there is no duality gap}],\\
\leq f(y, x^{{\sigma(k)}} )  + \langle A y + B x^{\sigma(k)} - b , \lambda^{\sigma(k)} \rangle 
+ \langle \mu^{\sigma(k)} ,  g( y , x^{\sigma(k)} ) \rangle  +  \mathcal{Q}^{\sigma(k)}( y ) + 2 \varepsilon^{\sigma(k)} \mbox{ by definition of }h_{x^{\sigma(k)}}^{\sigma(k)}.
\end{array}
$$
Taking the limit in the above relation as $k \rightarrow +\infty$, we get  for every $y \in Y$:
$$
\begin{array}{l}
f({\bar y}, x_* )  + \langle A {\bar y} + B x_* - b , \lambda_{*} \rangle + \langle \mu_{*} , g( {\bar y} , x_* ) \rangle  + \mathcal{Q}^{*}( \bar y )\\
\leq  f(y, x_* )  + \langle A y + B x_* - b ,  \lambda_{*}  \rangle + \langle \mu_{*} ,  g( y , x_{*} ) \rangle + \mathcal{Q}^{*}( y ).
\end{array}
$$
Recalling that $\bar y \in Y$ this shows that $\bar y$ is an optimal solution of
\begin{equation}
\left\{
\begin{array}{l}
\min f( y, x_* ) + \mathcal{Q}^{*}( y ) + \langle A y + B x_*  - b ,  \lambda_* \rangle + \langle \mu_* , g( y, x_* ) \rangle \\
y \in Y.
\end{array}
\right.
\end{equation}
Now recall that all functions $(\mathcal{Q}^{\sigma(k)})_k$ are convex on $Y$ and therefore the function 
$\mathcal{Q}^*$ is convex on $Y$ too.
Using Lemma \ref{optcondonediffonenotdiff}, the first order optimality conditions for $\bar y$ can be written
\begin{equation}\label{caseBposcontradfinal}
\left \langle \nabla_{y} f(\bar y, x_{* } )  + A^T \lambda_{*} + \sum_{i=1}^p \mu_{*} ( i) \nabla_{y} g_{i}( \bar y , x_{*} ), y  - \bar y  \right  \rangle
+  \mathcal{Q}^{*}( y  ) - \mathcal{Q}^{*}( \bar y  ) \geq 0
\end{equation}
for all $y \in Y$. Specializing the above relation for $y = {\tilde y}^{ \sigma(k_0 ) } $, we get 
$$
\left \langle \nabla_{y} f(\bar y , x_{* } )  + A^T \lambda_{*} + \sum_{i=1}^p \mu_{*} ( i) \nabla_{y} g_{i}( \bar y , x_{*} ), {\tilde y}^{ \sigma(k_0 ) }  - \bar y  \right  \rangle
+  \mathcal{Q}^{*}( {\tilde y}^{ \sigma(k_0 ) }  ) - \mathcal{Q}^{*}( \bar y  ) \geq 0,
$$
but the left-hand side of the above inequality is $\leq \varepsilon_0/2<0$ due to \eqref{caseBposcontrad} which yields the desired contradiction.\hfill
\end{proof}

\begin{thm}[Convergence of IDDP for convex nonlinear programs]\label{convaniddp}
Consider the sequences of vectors $x_t^k$ and functions $\mathcal{Q}_ t^k$
generated by the IDDP algorithm. Let Assumptions (H1) and (H2) hold.
We have the following:
\begin{itemize}
\item[(i)] Assume that noises $(\varepsilon_t^k)_{k \geq 1}$ are bounded: for 
$t=1,\ldots,T$, we have $0 \leq \varepsilon_t^k \leq {\bar \varepsilon}_t< +\infty$.
Define ${\tt{Err}}_1= {\bar{\varepsilon}}_1$ and for $t=2,\ldots,T$, 
\begin{itemize}
\item ${\tt{Err}}_t ={\bar \eta}_{t} + {\bar \varepsilon}_{t}$ with ${\bar \eta}_{t}$ the upper bound on $\eta_{t}^k (  \varepsilon_{t}^k )$
given by \eqref{boundetafirst} if $X_t$ is of type $S1$;
\item ${\tt{Err}}_t ={\bar \eta}_{t} + 2{\bar \varepsilon}_{t}$ with ${\bar \eta}_{t}$ the upper bound on $\eta_{t}^k (  \varepsilon_{t}^k )$
given by \eqref{boundetasecond} if $X_t$ is of type $S2$.
\end{itemize}
Then there exists an infinite set of iterations $K$, such that for $t=1,\ldots,T$, the sequence $(x_t^k)_{k \in K_t}$ converges to some $x_t^* \in \mathcal{X}_t$ and
for $t=2,\ldots,T$, the sequence $(\mathcal{Q}_{t}^{k}(x_{t-1}^{k}))_{k \in K}$ converges with its limit satisfying 
$$
H_1(t): \;
\mathcal{Q}_{t}(x_{t-1}^{*})- \sum_{\tau=t}^T {\tt{Err}}_{\tau} \leq 
\displaystyle \lim_{k \rightarrow +\infty,\, k \in K} \mathcal{Q}_{t}^{k}(x_{t-1}^{k}) \leq \mathcal{Q}_{t}(x_{t-1}^{*}).
$$
Moreover, 
\begin{equation} \label{convapproxfirststep}
 \mathcal{Q}_1( x_0 ) - \displaystyle \sum_{\tau=2}^T  {\tt{Err}}_{\tau}  \leq  \lim_{k \rightarrow +\infty, k \in K} F_1^{k-1}( x_1^k ,  x_0 )
 \leq \mathcal{Q}_1( x_0 ) + {\tt{Err}}_{1},
\end{equation}
and $(x_1^*, \ldots, x_T^*)$ is an $(\sum_{\tau=1}^T \tau {\tt{Err}}_{\tau})$-optimal solution for problem \eqref{defpb}.
\item[(ii)] If for $t=1,\ldots,T$,
$\lim_{k \rightarrow +\infty} \varepsilon_t^k = 0$ then for $t=2,\ldots,T+1$, 
$$
H_2(t): \;\displaystyle \lim_{k \rightarrow +\infty} \mathcal{Q}_{t}(x_{t-1}^{k}) -  \mathcal{Q}_{t}^k (x_{t-1}^{k}) = 0,
$$
$\lim_{k \rightarrow +\infty} F_1^{k-1}( x_1^k ,  x_0 )= \mathcal{Q}_1 ( x_0)$, and any accumulation point
of the sequence $(x_1^k, \ldots,x_T^k)_{k \geq 1 }$ is an optimal solution of \eqref{defpb}.
\end{itemize}
\end{thm}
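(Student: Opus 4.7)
The plan is to establish the result in two stages: first extracting a convergent subsequence using compactness, then proving the error bound by backward induction on $t$.

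First I would invoke Proposition \ref{propboundeddual} to confirm that, under (H1)-(H2) and for bounded noises, the cut parameters $(\theta_t^k, \eta_t^k(\varepsilon_t^k), \beta_t^k)$ and, when $X_t$ is of type $S_2$, the approximate dual multipliers $(\lambda_t^k,\mu_t^k)$, all lie in fixed compact sets independent of $k$. Combining this with compactness of each $\mathcal{X}_t$ (Assumption (H1)(a)), a standard Cantor diagonal extraction gives an infinite index set $K$ along which, for every $t\in\{1,\dots,T\}$, the forward trial points $x_t^k$ converge to some $x_t^*\in\mathcal{X}_t$, and simultaneously $\beta_t^k$, $\theta_t^k$, $\eta_t^k(\varepsilon_t^k)$, and $(\lambda_t^k,\mu_t^k)$ (if applicable) converge. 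I would also use $L_t$ from \eqref{formulaLt} as the uniform Lipschitz constant for all $\mathcal{Q}_t^k$, which Lemma \ref{convrecfuncQt} shows is also a valid bound on the Lipschitz constant of $\mathcal{Q}_t$.

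The heart of the proof is the backward induction on $t$ from $T+1$ down to $2$ establishing $H_1(t)$. The base case $t=T+1$ is immediate since $\mathcal{Q}_{T+1}^k\equiv\mathcal{Q}_{T+1}\equiv 0$. For the inductive step, the upper bound $\limsup_{k\in K}\mathcal{Q}_t^k(x_{t-1}^k)\le \mathcal{Q}_t(x_{t-1}^*)$ follows from the fact that every computed cut is a valid minorant of $\mathcal{Q}_t$ (since $\mathcal{Q}_{t+1}^k\le\mathcal{Q}_{t+1}$ gives $\underline{\mathcal{Q}}_t^k\le\mathcal{Q}_t$, so cuts of the former are cuts of the latter) combined with continuity of $\mathcal{Q}_t$ at $x_{t-1}^*$. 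The lower bound is the essential ingredient: for $k<k'$ both in $K$, the cut $\mathcal{C}_t^k$ is present in $\mathcal{Q}_t^{k'}$, so
\begin{equation*}
\mathcal{Q}_t^{k'}(x_{t-1}^{k'}) \;\ge\; \mathcal{C}_t^k(x_{t-1}^{k'}) \;=\; \mathcal{C}_t^k(x_{t-1}^k) + \langle \beta_t^k,\, x_{t-1}^{k'}-x_{t-1}^k \rangle,
\end{equation*}
and the inner product vanishes as $k,k'\to\infty$ in $K$ because $\|\beta_t^k\|\le L_t$ and $x_{t-1}^{k'}-x_{t-1}^k\to 0$. The inexact-cut bounds of Proposition \ref{fixedprop1} (type $S_1$) and Proposition \ref{varprop1} (type $S_2$) then give $\mathcal{C}_t^k(x_{t-1}^k)\ge \underline{\mathcal{Q}}_t^k(x_{t-1}^k)-\bar\eta_t$ or $\mathcal{C}_t^k(x_{t-1}^k)\ge \underline{\mathcal{Q}}_t^k(x_{t-1}^k)-\bar\eta_t-\bar\varepsilon_t$, respectively. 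Finally, I would use the inductive hypothesis $H_1(t+1)$ evaluated at the (now convergent) backward trial point $x_t^{Bk}$ together with the $\varepsilon_t^k$-optimality \eqref{epssolprimal} to conclude that $\liminf_{k\in K}\underline{\mathcal{Q}}_t^k(x_{t-1}^k)\ge \mathcal{Q}_t(x_{t-1}^*)-\sum_{\tau=t+1}^T{\tt Err}_\tau$; assembling the three estimates yields $H_1(t)$ with the stated error.

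To conclude part (i), applying $H_1(2)$ at the forward step gives $\lim_{k\in K}\mathcal{Q}_2^{k-1}(x_1^k)$ within $\sum_{\tau=2}^T{\tt Err}_\tau$ of $\mathcal{Q}_2(x_1^*)$, while the forward $\varepsilon_1^k$-optimality at stage $1$ contributes the extra ${\tt Err}_1$ term to bound $F_1^{k-1}(x_1^k,x_0)$ as in \eqref{convapproxfirststep}. Decomposing the total cost $\sum_{t=1}^T f_t(x_t^*,x_{t-1}^*)$ into a telescoping chain of differences $\mathcal{Q}_t(x_{t-1}^*)-\mathcal{Q}_{t+1}(x_t^*)-f_t(x_t^*,x_{t-1}^*)$ and using $H_1(t)$ at each stage produces the $\sum_{\tau=1}^T \tau\,{\tt Err}_\tau$ optimality certificate (the linear-in-$\tau$ factor arises because stage-$\tau$ noises enter the bounds of all stages $1,\dots,\tau$). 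For part (ii), I would apply Propositions \ref{propvanish1} and \ref{propvanish1dual} with $\mathcal{Q}^k:=\mathcal{Q}_{t+1}^k$ (which is $L_{t+1}$-Lipschitz and uniformly bounded between $\mathcal{Q}_{t+1}^1$ and $\mathcal{Q}_{t+1}$) along $K$ to conclude $\eta_t^k(\varepsilon_t^k)\to 0$; combined with $\varepsilon_t^k\to 0$ this gives ${\tt Err}_t\to 0$ throughout the backward induction, so $H_2(t)$ holds and any accumulation point of $(x_1^k,\dots,x_T^k)$ satisfies the exact dynamic programming equations and is optimal.

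The main obstacle I anticipate is the lower-bound step in the backward induction, specifically propagating $H_1(t+1)$ through the inf defining $\underline{\mathcal{Q}}_t^k(x_{t-1}^k)$: the backward pass generates a new trial point $x_t^{Bk}$ distinct from the forward point $x_t^k$, and one must show $\mathcal{Q}_{t+1}^k(x_t^{Bk})\to \mathcal{Q}_{t+1}(\cdot)$ at the relevant limit despite having no a priori convergence of $x_t^{Bk}$ along $K$. This is handled by further diagonal extraction (using boundedness of $x_t^{Bk}\in\mathcal{X}_t$) combined with lower semicontinuity of the inf and the uniform Lipschitz property of $\mathcal{Q}_{t+1}^k$. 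Without the uniform bound $L_{t+1}$ on $\|\beta_{t+1}^k\|$ furnished by Proposition \ref{propboundeddual}, this step would fail, which is the reason the sharper Slater-type hypothesis (H2) is needed in addition to (H1)(e).
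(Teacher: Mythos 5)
Your overall architecture (compactness extraction, backward induction for $H_1(t)$, cut validity for the upper bound, Propositions \ref{fixedprop1}/\ref{varprop1} for the cut quality, Propositions \ref{propvanish1} and \ref{propvanish1dual} for part (ii)) matches the paper's, but there is a genuine gap precisely at the step you flag as the ``main obstacle,'' and your proposed repair does not close it. You want to propagate the induction hypothesis $H_1(t+1)$ through the backward trial point $x_t^{Bk}$: after invoking \eqref{epssolprimal} you need $\mathcal{Q}_{t+1}(x_t^{Bk}) - \mathcal{Q}_{t+1}^{k}(x_t^{Bk}) \to$ something bounded by $\sum_{\tau=t+1}^T {\tt{Err}}_\tau$. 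But $H_1(t+1)$ controls the gap $\mathcal{Q}_{t+1} - \mathcal{Q}_{t+1}^{k}$ \emph{only along the forward trajectory} $(x_t^k)_{k \in K}$; it says nothing about that gap at other points. Further diagonal extraction gives you a limit $x_t^{B*}$ of the backward points, and lower semicontinuity plus the uniform Lipschitz bound $L_{t+1}$ let you pass to that limit, but nothing forces $\mathcal{Q}_{t+1}^{k}$ to approximate $\mathcal{Q}_{t+1}$ at $x_t^{B*}$ unless $x_t^{B*}$ happens to be a limit of forward points --- which is not guaranteed. The polyhedral approximations are only tight where cuts have been accumulated, and that is governed by the forward pass.

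The paper sidesteps the backward points entirely. The lower bound on $\underline{\mathcal{Q}}_t^k(x_{t-1}^k)$ in \eqref{eqconv1bis0} is obtained by first using monotonicity, $\underline{\mathcal{Q}}_t^k(x_{t-1}^k) \geq \underline{\mathcal{Q}}_t^{k-1}(x_{t-1}^k)$, and then observing that the forward problem \eqref{forward1} at iteration $k$, stage $t$, is exactly $\underline{\mathcal{Q}}_t^{k-1}(x_{t-1}^k)$ with $\varepsilon_t^k$-optimal solution the \emph{forward} point $x_t^k$; via \eqref{epssolforward} this yields $\underline{\mathcal{Q}}_t^{k-1}(x_{t-1}^k) \geq f_t(x_t^k,x_{t-1}^k) + \mathcal{Q}_{t+1}^{k-1}(x_t^k) - \varepsilon_t^k \geq \mathcal{Q}_t(x_{t-1}^k) + \mathcal{Q}_{t+1}^{k-1}(x_t^k) - \mathcal{Q}_{t+1}(x_t^k) - \varepsilon_t^k$, so the only deficit to control is $\mathcal{Q}_{t+1}^{k-1}(x_t^k) - \mathcal{Q}_{t+1}(x_t^k)$ at the forward point, where $H_1(t+1)$ applies (after a short argument combining monotone convergence of $\mathcal{Q}_{t+1}^k$ at a fixed point with the uniform Lipschitz constant to replace the index $k$ by $k-1$). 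Your two-index device $\mathcal{Q}_t^{k'}(x_{t-1}^{k'}) \geq \mathcal{C}_t^k(x_{t-1}^{k'})$ is valid but unnecessary --- the cut $\mathcal{C}_t^k$ is already present in $\mathcal{Q}_t^k$ and is evaluated at the same iteration's point in \eqref{eqconv1}. Finally, note that $H_2(t)$ in part (ii) is a full-sequence limit, not a limit along $K$; the forward-point formulation of the telescoping inequality is what makes that possible.
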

\begin{proof}
We show (i) by backward induction on $t$. 
Note that the sequence $(x_1^k, \ldots,x_T^k)_{k \geq 1}$ belongs to the compact set 
$\mathcal{X}_1 \small{\times} \ldots \small{\times} \mathcal{X}_T$ and for $t=2,\ldots,T$,  the sequence
$(\mathcal{Q}_{t}^{k}(x_{t-1}^{k}))_{k \geq 1}$ belongs to the compact intervals 
$$
[\min_{x_{t-1} \in \mathcal{X}_{t-1}} \mathcal{Q}_{t}^{1}(x_{t-1}) , \max_{x_{t-1} \in \mathcal{X}_{t-1}}, \mathcal{Q}_{t}(x_{t-1}) ].
$$
Therefore, these sequences have some convergent subsequences: there exists an infinite set of iterations $K$
such that $\lim_{k \in K} (x_1^k,\ldots,x_T^k)=(x_1^* ,\ldots,x_T^* ) \in \mathcal{X}_1 \small{\times} \ldots \small{\times} \mathcal{X}_T$ and the sequence
$(\mathcal{Q}_{t}^{k}(x_{t-1}^{k}))_{k \in K}$   has  a limit.
$H_1(T+1)$ holds by definition of  $\mathcal{Q}_{T+1}, \mathcal{Q}_{T+1}^{k}$.
Now assume that $H_1(t+1)$ holds for some $t \in \{2,\ldots,T\}$. We have for 
every $k \geq 1$:
\begin{equation} \label{eqconv1}
\mathcal{Q}_{t}(x_{t-1}^k)  \geq  \mathcal{Q}_{t}^k(x_{t-1}^k)  \geq \mathcal{C}_t^k (  x_{t-1}^k ) =  \theta_t^k - \eta_t^k (  \varepsilon_t^k   )  \mbox{ by definition of }\mathcal{Q}_t^k.
\end{equation}
Let us consider two cases to derive a lower bound on $\theta_t^k$: $X_t$ is of type $S1$ (Case A) and $X_t$ is of type $S2$ (Case B).

\par {\textbf{Case A.}} We have for all $k \geq 1$:
\begin{equation}\label{caseAeqconv1}
\theta_t^k = F_t^k ( x_t^{B k}, x_{t-1}^k ) \geq {\underline{\mathcal{Q}}}_t^k ( x_{t-1}^{k} ) \mbox{ using }\eqref{epssolprimal}.
\end{equation}
\par {\textbf{Case B.}} Using relations \eqref{dualiddp}, \eqref{defepssoldual} and the fact that
$x_t^{B k} \in \mathcal{X}_t$ we get for all $k \geq 1$:
\begin{equation}\label{caseBeqconv1}
\begin{array}{lll}
\theta_t^k & = &  L_{x_{t-1}^k}(x_t^{B  k}, y_t^{B  k}, \lambda_t^k, \mu_t^k ),\\
& \geq  &  h_{t, x_{t-1}^k}^k ( \lambda_t^k , \mu_t^k ) \geq   {\underline{\mathcal{Q}}}_t^k ( x_{t-1}^{k} ) - \varepsilon_t^k.
\end{array}
\end{equation}
We now need a  lower bound on ${\underline{\mathcal{Q}}}_t^k ( x_{t-1}^{k} )$ for all $k \geq 1$:
\begin{equation} \label{eqconv1bis0}
\begin{array}{lcl}
{\underline{\mathcal{Q}}}_t^k ( x_{t-1}^{k} ) & \geq & {\underline{\mathcal{Q}}}_t^{k-1} ( x_{t-1}^k )   \mbox{ by monotonicity,}\\
& \geq  &  F_t^{k-1}(x_t^{k}, x_{t-1}^k) -  \varepsilon_t^k    \mbox{ using } \eqref{epssolforward},\\
& = &  f_t(x_t^{k}, x_{t-1}^k) + \mathcal{Q}_{t+1}^{k-1}(x_t^{k}) -  \varepsilon_t^k  \mbox{ by definition of }F_t^{k-1},\\
& = &  F_t(x_t^{k}, x_{t-1}^k) + \mathcal{Q}_{t+1}^{k-1}(x_t^{k}) - \mathcal{Q}_{t+1}( x_t^{k})    -  \varepsilon_t^k   \mbox{ by definition of }F_t,\\
& \geq &  \mathcal{Q}_t(x_{t-1}^k)+ \mathcal{Q}_{t+1}^{k-1}(x_t^{k}) - \mathcal{Q}_{t+1}( x_t^{k})    -  \varepsilon_t^k  , \mbox{ by definition of }\mathcal{Q}_t.
\end{array}
\end{equation}
Combining \eqref{eqconv1}, \eqref{caseAeqconv1}, \eqref{caseBeqconv1}, and  \eqref{eqconv1bis0}, yields for all $k \geq 1$:
\begin{equation} \label{eqconv1bisfuture}
\left\{
\begin{array}{l}
\mathcal{Q}_{t}(x_{t-1}^k)  \geq  \mathcal{Q}_{t}^k(x_{t-1}^k)  \geq \mathcal{Q}_t(x_{t-1}^k)+ \mathcal{Q}_{t+1}^{k-1}(x_t^{k}) - \mathcal{Q}_{t+1}( x_t^{k}) - \varepsilon_t^k -  \eta_t^k (  \varepsilon_t^k   )\mbox{ if }X_t\mbox{ is of }\mbox{ type }S1\\
\mathcal{Q}_{t}(x_{t-1}^k)  \geq  \mathcal{Q}_{t}^k(x_{t-1}^k)  \geq \mathcal{Q}_t(x_{t-1}^k)+ \mathcal{Q}_{t+1}^{k-1}(x_t^{k}) - \mathcal{Q}_{t+1}( x_t^{k}) - 2\varepsilon_t^k -  \eta_t^k (  \varepsilon_t^k   )\mbox{ if }X_t\mbox{ is of }\mbox{ type } S2, 
\end{array}
\right.
\end{equation}
which implies, using the definition of ${\tt{Err}}_t$, that for all $k \geq 1$,
\begin{equation} \label{eqconv1bis}
\mathcal{Q}_{t}(x_{t-1}^k)  \geq  \mathcal{Q}_{t}^k(x_{t-1}^k)  \geq \mathcal{Q}_t(x_{t-1}^k)+ \mathcal{Q}_{t+1}^{k-1}(x_t^{k}) - \mathcal{Q}_{t+1}( x_t^{k}) - {\tt{Err}}_t.
\end{equation}
Take now an arbitrary $\delta>0$. Using the induction hypothesis, we can find $k_0 \in K$ such that
for all $k \in K$ with $k \geq k_0$ we have
\begin{equation}\label{convifirststep}
\mathcal{Q}_{t+1}^{k}(x_{t}^{k}) -  \mathcal{Q}_{t+1}(x_{t}^{k}) \geq -\frac{\delta}{3} -  \sum_{\tau=t+1}^T {\tt{Err}}_{\tau},
\;\|x_t^{k}  - x_t^{k_0}\| \leq \frac{\delta}{6 L_{t+1}}.
\end{equation}
Also, since the sequence $(\mathcal{Q}_{t+1}^k( x_t^{k_0}  ))_{k \geq k_0}$ is increasing and bounded from above
by $\mathcal{Q}_{t+1}( x_t^{k_0}  )<+\infty$, it converges. Therefore 
$\lim_{k \rightarrow + \infty, k \in K} \mathcal{Q}_{t+1}^{k-1}(x_t^{k_0}) - \mathcal{Q}_{t+1}^{k}(x_t^{k_0})=0 $ and $k_0$
can be chosen sufficiently large in such a way that for $k \in K$ with $k \geq k_0$ both \eqref{convifirststep} and
\begin{equation}\label{finalfirsticonviddp}
\mathcal{Q}_{t+1}^{k-1}(x_t^{k_0}) - \mathcal{Q}_{t+1}^{k}(x_t^{k_0}) \geq -\frac{\delta}{3}
\end{equation}
hold.
Then for all $k \in K$ with $k \geq k_0$ we get  
$$
\begin{array}{lcl}
\mathcal{Q}_{t}(x_{t-1}^k)  &\geq  &   \mathcal{Q}_{t}^k ( x_{t-1}^k ),\\ 
& \stackrel{\eqref{eqconv1bis}}{\geq}  & \mathcal{Q}_t(x_{t-1}^k)+ \mathcal{Q}_{t+1}^{k-1}(x_t^{k}) - \mathcal{Q}_{t+1}( x_t^{k})-{\tt{Err}}_t,\\
& = & \mathcal{Q}_t(x_{t-1}^k)+ \mathcal{Q}_{t+1}^{k-1}(x_t^{k}) - \mathcal{Q}_{t+1}^{k}(x_t^{k})  +   \mathcal{Q}_{t+1}^{k}(x_t^{k}) -   \mathcal{Q}_{t+1}( x_t^{k})-{\tt{Err}}_t, \\
&  \stackrel{\eqref{convifirststep}}{\geq} & \mathcal{Q}_t(x_{t-1}^k)  -\frac{\delta}{3} -  \displaystyle \sum_{\tau=t}^T {\tt{Err}}_{\tau} + \mathcal{Q}_{t+1}^{k-1}(x_t^{k_0}) - \mathcal{Q}_{t+1}^{k}(x_t^{k_0}) - 2 L_{t+1}\|x_t^k  - x_t^{k_0}\|,\\
& \stackrel{\eqref{convifirststep}, \eqref{finalfirsticonviddp}}{\geq } &  \mathcal{Q}_t(x_{t-1}^k) - \delta -  \displaystyle \sum_{\tau=t}^T {\tt{Err}}_{\tau}.
\end{array}
$$
Taking the limit in the above inequality as $k \in K, k \rightarrow +\infty$, using the continuity of
$\mathcal{Q}_t$,  and then taking the limit as $\delta \rightarrow 0$ we obtain $H_2(t)$. This achieves the induction step and therefore
$H_2(2),\ldots,H_2(T+1)$ hold.

Using \eqref{eqconv1bis0} for $t=1$, we get for all $k \geq 1$,
\begin{equation} \label{eqconv1bisfuturet1}
\mathcal{Q}_{1}(x_{0}) +  {\tt{Err}}_{1}  \geq  {\underline{\mathcal{Q}}}_{1}^k(x_{0}) +  {\tt{Err}}_{1}  \geq  F_1^{k-1}( x_1^k , x_0 )   \geq  \mathcal{Q}_1 (x_{0} ) + \mathcal{Q}_{2}^{k-1}(x_1^{k}) - \mathcal{Q}_{2}( x_1^{k}),
\end{equation}
both when $X_t$ is of type $S1$ and when $X_t$ is of type $S2$. Repeating the computations of the induction step which have shown
that for $t \in \{2,\ldots,T\}$ the sequence $(\mathcal{Q}_{t+1}^{k-1}(x_t^{k}) - \mathcal{Q}_{t+1}( x_t^{k}))_{k \in K}$ has 
a limit $\geq -\sum_{\tau=t+1}^T {\tt{Err}}_{\tau}$ when $k \rightarrow +\infty$, we obtain that the sequence $(\mathcal{Q}_{2}^{k-1}(x_1^{k}) - \mathcal{Q}_{2}( x_1^{k}))_{k \in K}$ has a limit when $k \rightarrow +\infty$
which is 
$\geq -\sum_{\tau=2}^T {\tt{Err}}_{\tau}$. 
Now observe that $F_1^{k-1} ( x_1^{k} , x_0 )=f_1( x_1^k , x_0 ) + \mathcal{Q}_2^{k-1} ( x_1^k )$.
Since the sequences $(\mathcal{Q}_{2}^{k-1}(x_1^{k}) - \mathcal{Q}_{2}( x_1^{k}))_{k \in K}$ and $\mathcal{Q}_2 ( x_1^k )$ converge when
$k \rightarrow +\infty$, the sequences $( \mathcal{Q}_2^{k-1} ( x_1^k ) )_{k \in K}$ and $( F_1^{k-1} ( x_1^{k} , x_0 ) )_{k \in K}$ also converge when $k \rightarrow +\infty$.
Therefore passing to the limit in  \eqref{eqconv1bisfuturet1} when $k\rightarrow +\infty, k\in K$, we get \eqref{convapproxfirststep}.

Relations \eqref{eqconv1}, \eqref{caseAeqconv1}, \eqref{caseBeqconv1}, \eqref{eqconv1bis0}, and \eqref{eqconv1bisfuturet1} also imply that for $t=1,\ldots,T$, and $k \geq 1$:
\begin{equation}\label{relationfinali}
\mathcal{Q}_t ( x_{t-1}^k ) + {\tt{Err}}_t \geq F_t^{k-1}( x_t^k, x_{t-1}^k ) = f_t( x_t^k , x_{t-1}^k ) + \mathcal{Q}_{t+1}^{k-1}(  x_t^k ).  
\end{equation}
For $t=1,\ldots,T$, we have that 
$\lim_{k \rightarrow +\infty, k \in K} \mathcal{Q}_{t+1}^{k-1}(  x_t^k ) - \mathcal{Q}_{t+1}^{k}(  x_t^k )=0$
and the sequence $(\mathcal{Q}_{t+1}^{k}(  x_t^k ))_{k \in K}$ has a limit when $k \rightarrow +\infty$ which is $\geq \mathcal{Q}_{t+1}( x_t^* ) -\sum_{\tau=t+1}^T {\tt{Err}}_{\tau}$.
It follows that the sequence $(\mathcal{Q}_{t+1}^{k-1}(  x_t^k ))_{k \in K}$ also has a limit when $k \rightarrow +\infty$ which is $\geq \mathcal{Q}_{t+1}( x_t^* ) -\sum_{\tau=t+1}^T {\tt{Err}}_{\tau}$.
Passing to the limit in \eqref{relationfinali} when $k \rightarrow +\infty, k \in K$, and using the continuity of $f_t$ we obtain 
$$
\mathcal{Q}_t ( x_{t-1}^* ) - \mathcal{Q}_{t+1} ( x_{t}^* ) + \sum_{\tau=t}^T {\tt{Err}}_{\tau} \geq f_t ( x_t^* , x_{t-1}^* )
$$
for $t=1,\ldots,T$ where $x_0^* = x_0$, and summing these inequalities we get 
$$
\mathcal{Q}_1( x_0 ) + \sum_{t=1}^T \sum_{\tau=t}^T {\tt{Err}}_{\tau} =  \mathcal{Q}_1( x_0 ) - \mathcal{Q}_{T+1}( x_T^* ) + \sum_{t=1}^T \sum_{\tau=t}^T {\tt{Err}}_{\tau} \geq \sum_{t=1}^T f_t( x_t^* , x_{t-1}^* ).
$$
If $X_t$ is of type $S1$ then since $\mathcal{X}_t$ is closed we have $x_t^* \in \mathcal{X}_t$ while
if $X_t$ is of type $S2$ then since $\mathcal{X}_t$ is closed and $g_t$ is differentiable (and therefore lower semicontinuous with closed level sets) 
we have that $x_t^* \in X_t( x_{t-1}^ *)$. This shows that $x^* :=(x_1^*, \ldots, x_T^* )$ is feasible for problem \eqref{defpb} and the relation above proves that the value
$\sum_{t=1}^T f_t( x_t^* , x_{t-1}^* )$ of the objective function at that point is at most the optimal value $\mathcal{Q}_1( x_0 )$ of the problem
plus $\sum_{\tau=1}^T \tau {\tt{Err}}_{\tau}$. This completes the proof of (i).
\par Let us now show (ii). First observe that \eqref{eqconv1bisfuture} still holds. We now show that 
\begin{equation}\label{proofnoisesvanish}
\lim_{k \rightarrow +\infty} \eta_t^k (  \varepsilon_t^k   ) =0.
\end{equation}
We consider two cases: $X_t$ is of type $S1$ (Case A) and $X_t$ is of type $S2$ (Case B).\\

\par {\textbf{Case A.}}
We have that $x_t^{B k}$ is an $\varepsilon_t^k$-optimal solution of the problem 
\begin{equation}\label{pbbackreformulationproof1}
\left\{
\begin{array}{l}
\min f_t( x_t, x_{t-1}^k ) + \mathcal{Q}_{t+1}^k ( x_t ) \\
x_t \in \mathcal{X}_t.
\end{array}
\right.
\end{equation}
Observe that $\eta_t^k (  \varepsilon_t^k   )$ can be written in the form (see \eqref{defetaiddp0} and the definition of $B_{t+1}^k$):
\begin{equation}\label{defetaiddp2}
\eta_t^k (  \varepsilon_t^k   )=
\left\{
\begin{array}{l}
\max_{x_t} \;\langle \nabla_{x_t} f_{t}({x}_t^{B  k}, x_{t-1}^k ), x_t^{B  k} - x_t  \rangle + \mathcal{Q}_{t+1}^{k}(  x_t^{B k}  )   -  \mathcal{Q}_{t+1}^{k}(  x_t  )  \\
x_t \in \mathcal{X}_t.
\end{array}
\right.
\end{equation}
We now apply Proposition \ref{propvanish1} to problems \eqref{pbbackreformulationproof1}, \eqref{defetaiddp2} setting:
\begin{itemize}
\item $Y=\mathcal{X}_t, X=\mathcal{X}_{t-1}$ which are nonempty, compact, and convex;
\item $f=f_t \in \mathcal{C}^1( Y \small{\times} X)$ convex on $Y \small{\times} X$;
\item $\mathcal{Q}^k=\mathcal{Q}_{t+1}^k$ which is convex Lipschitz continuous on $Y$ with Lipschitz constant
$L_{t+1}=M_{t+1 2}$ (see the proof of Proposition  \ref{propboundeddual}) and satisfy
$$
{\underline{Q}} := \mathcal{Q}_{t+1}^1 \leq \mathcal{Q}^k \leq {\bar{\mathcal{Q}}}:=\mathcal{Q}_{t+1}
$$
on $Y$ with ${\underline{Q}}, {\bar{\mathcal{Q}}}$ continuous on $Y$;
\item $(x^k) = (x_{t-1}^k)$ sequence in $X$ and $(y^k) = (x_t^{B k})$ sequence in $Y$.
\end{itemize}
Therefore we can apply  Proposition \ref{propvanish1} to obtain \eqref{proofnoisesvanish}.\\

\par {\textbf{Case B.}}
Now $x_t^{B k}$ is an $\varepsilon_t^k$-optimal solution of the problem 
\begin{equation}\label{pbbackreformulationproof2}
\left\{
\begin{array}{l}
\min f_t( x_t, x_{t-1}^k ) + \mathcal{Q}_{t+1}^k ( x_t ) \\
x_t \in X_t( x_{t-1}^k ).
\end{array}
\right.
\end{equation}
Observe that $\eta_t^k (  \varepsilon_t^k   )$ can be written as the optimal value of the following optimization problem (see \eqref{defetaiddp} and the definition of $B_{t+1}^k$):
\begin{equation}\label{defetaiddp2bis}
\begin{array}{l}
\max_{x_t} \;\langle \nabla_{x_t} f_t( x_t^{B k}, x_{t-1}^k ) + A_t^T \lambda_t^k + \sum_{i=1}^m \mu_t^k ( i) \nabla g_{t i}( x_t^{B k}, x_{t-1}^k ),x_t^{B k} - x_t   \rangle -\mathcal{Q}_{t+1}^k( x_t ) + \mathcal{Q}_{t+1}^k( x_t^{B k} )    \\
x_t \in \mathcal{X}_t.
\end{array}
\end{equation}
We now apply Proposition \ref{propvanish1dual} to primal  problem \eqref{pbbackreformulationproof2}, dual problem \eqref{dualproblemS2} and  problem \eqref{defetaiddp2bis} setting:
\begin{itemize}
\item $Y=\mathcal{X}_t, X=\mathcal{X}_{t-1}$ which are nonempty compact and convex;
\item $f=f_t \in \mathcal{C}^1( Y \small{\times} X)$ convex on $Y \small{\times} X$;
\item $g=g_t \in \mathcal{C}^1( Y \small{\times} X)$ with components $g_i,i=1,\ldots,p$, convex
on $Y \small{\times}X^{\varepsilon}$;
\item $\mathcal{Q}^k=\mathcal{Q}_{t+1}^k$ which is convex Lipschitz continuous on $Y$ with Lipschitz constant
$L_{t+1}$ obtained by replacing $t$ by $t+1$ in \eqref{formulaLt} (given in the proof of
Proposition  \ref{propboundeddual}) and satisfy
$$
{\underline{Q}} := \mathcal{Q}_{t+1}^1 \leq \mathcal{Q}^k \leq {\bar{\mathcal{Q}}}:=\mathcal{Q}_{t+1}
$$
on $Y$ with ${\underline{Q}}, {\bar{\mathcal{Q}}}$ continuous on $Y$;
\item $(x^k ) = (x_{t-1}^k)_k$ sequence in $X$, $\lambda^k = \lambda_t^k$, $\mu^k = \mu_t^k$, and $(y^k) = (x_t^{B k})$ sequence in $Y$. 
\end{itemize}
With this notation Assumption (H) is satisfied, since Assumption (H2) holds.
It follows that we can apply Proposition \ref{propvanish1dual} to obtain \eqref{propvanish1dual}.\\

Therefore \eqref{proofnoisesvanish} holds both when $X_t$ is of type $S1$ and of type $S2$.\\

Next, recall that $\mathcal{Q}_{t+1}$ is convex, functions $(\mathcal{Q}_{t+1}^k)_k$ are $L_{t+1}$-Lipschitz,
and for all $k \geq 1$ we have $\mathcal{Q}_{t+1}^k \leq \mathcal{Q}_{t+1}^{k+1} \leq\mathcal{Q}_{t+1}$ on compact set $\mathcal{X}_t$.
Therefore, the induction hypothesis
$$
\lim_{k \rightarrow +\infty} \mathcal{Q}_{t+1}( x_t^k ) - \mathcal{Q}_{t+1}^k ( x_t^k )=0 
$$
implies, using Lemma A.1 in \cite{lecphilgirar12}, that
\begin{equation}\label{indchypreformulated}
\lim_{k \rightarrow +\infty} \mathcal{Q}_{t+1}( x_t^k ) - \mathcal{Q}_{t+1}^{k-1} ( x_t^k )=0 . 
\end{equation}
Plugging \eqref{proofnoisesvanish} and \eqref{indchypreformulated} into
\eqref{eqconv1bisfuture}, we get 
$$
\lim_{k \rightarrow +\infty} \mathcal{Q}_{t}( x_{t-1}^k ) - \mathcal{Q}_{t}^k ( x_{t-1}^k )=0,
$$
which shows $H_2(t)$.

Next we write \eqref{eqconv1bis0} for $t=1$, implying for all $k \geq 1$:
\begin{equation}\label{finalF1ii}
\mathcal{Q}_1( x_0 ) \geq {\underline{\mathcal{Q}}}_1^k ( x_{0} )  
\geq F_1^{k-1}( x_1^k, x_0 ) -\varepsilon_1^k \geq \mathcal{Q}_1( x_0 ) + \mathcal{Q}_2^{k-1}( x_1^k )-  \mathcal{Q}_2 ( x_1^k ) -\varepsilon_1^k. 
\end{equation}
From $H_2(t)$ we have $\lim_{k \rightarrow +\infty} \mathcal{Q}_2^{k}( x_1^k ) - \mathcal{Q}_2( x_1^k )=0$.
Applying  once again Lemma A.1 in \cite{lecphilgirar12} (to the sequence of functions 
$(\mathcal{Q}_2^k)$ which are $L_2$-Lipschitz and satisfy $\mathcal{Q}_{2}^{k} \leq \mathcal{Q}_2^{k+1} \leq \mathcal{Q}_2$ on $\mathcal{X}_1$)
we deduce that $\lim_{k \rightarrow +\infty} \mathcal{Q}_2^{k-1}( x_1^k ) - \mathcal{Q}_2( x_1^k )=0$, which, plugged into
\eqref{finalF1ii}, gives $\lim_{k \rightarrow +\infty} F_1^{k-1}( x_1^k, x_0 )  = \mathcal{Q}_1( x_0 ) $.

Finally, consider an accumulation $(x_1^*,\ldots,x_T^*)$ of the sequence $(x_1^k, \ldots, x_T^k)_{k \geq 1}$.
Let $K$ be such that $\lim_{k \rightarrow +\infty, k\in K} (x_1^k, \ldots, x_T^k)=(x_1^*,\ldots,x_T^*)$.
Relation \eqref{eqconv1bis0} gives for all $k \geq 1$ and $t=1,\ldots,T$:
$$
\mathcal{Q}_t ( x_{t-1}^k ) \geq {\underline{\mathcal{Q}}}_t^k ( x_{t-1}^{k} )  
\geq F_t^{k-1}( x_t^k, x_{t-1}^k ) -\varepsilon_t^k = f_t(x_t^k , x_{t-1}^k) + \mathcal{Q}_{t+1}^{k-1}( x_t^k ) -\varepsilon_t^k . 
$$
Combining this relation with 
$$
\lim_{k \rightarrow +\infty, k\in K} \mathcal{Q}_{t+1}^{k-1}( x_t^k ) = \lim_{k \rightarrow +\infty, k \in K} \mathcal{Q}_{t+1}^{k}( x_t^k ) =
\lim_{k \rightarrow +\infty, k \in K} \mathcal{Q}_{t+1}( x_t^k ) =  \mathcal{Q}_{t+1}( x_t^* ),\,t=1,\ldots,T,
$$
(we have used the continuity of $\mathcal{Q}_{t+1}$), we get 
$$
\mathcal{Q}_t(x_{t-1}^*) \geq f_t ( x_t^* , x_{t-1}^* ) + \mathcal{Q}_{t+1}(x_{t}^*), \mbox{for all }t=1,\ldots,T.
$$
Summing these inequalities we obtain that the optimal value $\mathcal{Q}_1( x_0 )$ of \eqref{defpb} satisfies:
\begin{equation}\label{optvaluefinal}
\mathcal{Q}_1( x_0 ) \geq  \sum_{t=1}^T f_t ( x_t^* , x_{t-1}^* ).
\end{equation}
As in (i), if $X_t$ is of type $S1$ then since $\mathcal{X}_t$ is closed we have $x_t^* \in \mathcal{X}_t$ while
if $X_t$ is of type $S2$ then since $\mathcal{X}_t$ is closed and $g_t$ lower semicontinuous 
we have that $x_t^* \in X_t( x_{t-1}^ *)$. This shows that $(x_1^*, \ldots, x_T^* )$ is feasible
for \eqref{defpb}
and the value $\sum_{t=1}^T f_t ( x_t^* , x_{t-1}^* )$ of the objective function
at this point is at most the optimal value $\mathcal{Q}_1( x_0 )$ of the problem. 
Therefore, \eqref{optvaluefinal} is an equality and
$(x_1^*, \ldots, x_T^* )$ is an optimal solution to \eqref{defpb}.\hfill
\end{proof}
\begin{cor}[Approximate solution computed by IDDP for bounded noises] \label{corconviddp}
For $t=2,\ldots,T$, let us set $\bar \delta_t = {\bar \eta}_t$ if $X_t$
is of type S1 and $\bar \delta_t = {\bar \eta}_t + {\bar{\varepsilon}}_t$
if $X_t$ is of type S2. Therefore, setting $\bar \delta_1=0$, the error term ${\tt{Err}}_t$ given in Theorem \ref{convaniddp}
 can be written as ${\tt{Err}}_t= \bar \delta_t + {\bar{\varepsilon}}_t$ for $t=1,\ldots, T$, where 
 $\bar \varepsilon_t$ (resp. ${\bar{\delta}}_t$) is an error term coming from the fact that approximate optimal values (resp. approximate
 subgradients) for the value functions are computed. 
Recall that for $t=2,\ldots,T$,
if $X_t$ is of type S1 (resp. S2) then using
Proposition \ref{fixedprop1} (resp. Proposition \ref{varprop1})
the distance between the value ${\underline{\mathcal{Q}}}_t^k ( x_{t-1}^k )$
of ${\underline{\mathcal{Q}}}_t^k$ at $x_{t-1}^k$ and the value
$\mathcal{C}_t^k ( x_{t-1}^k )$
of cut $\mathcal{C}_t^k$ at $x_{t-1}^k$
is at most
$\eta_t^k (\varepsilon_t^k ) \leq \bar \delta_t$ (resp. $\eta_t^k (\varepsilon_t^k ) + \varepsilon_t^k  \leq \bar \delta_t$).

 We deduce a nice interpretation of (i) in Theorem \ref{convaniddp}: any accumulation point of the sequence
 $(x_1^k,\ldots,x_T^k)$ is a $\frac{T(T+1)}{2}({\bar \delta} + {\bar {\varepsilon}})$-optimal solution
 of \eqref{defpb} where $\displaystyle {\bar {\varepsilon}}=\max_{t=1,\ldots,T} {\bar \varepsilon}_t$ is an upper bound on noises $\varepsilon_t^k$ and
 ${\bar \delta}= \displaystyle \max_{t=2,\ldots,T} {\bar \delta}_t$ is an upper bound on the distance between the value of the
 (theoretical) exact cuts and the value of our inexact cuts at the trial points $x_{t-1}^k$. 
\end{cor}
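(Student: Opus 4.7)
The plan is to recognize that this corollary is a bookkeeping consequence of Theorem \ref{convaniddp}(i): it simply repackages the per-stage error terms ${\tt{Err}}_t$ in a form that exposes their interpretation (subgradient error plus optimal-value error) and then bounds the weighted sum $\sum_{\tau=1}^T \tau {\tt{Err}}_\tau$ by a uniform quantity. So the proof decomposes into three short verifications rather than any new analytic work.

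First, I would verify the decomposition ${\tt{Err}}_t = {\bar \delta}_t + {\bar \varepsilon}_t$ by a case analysis on the type of $X_t$. For $t=1$, the definition ${\tt{Err}}_1 = {\bar \varepsilon}_1$ in Theorem \ref{convaniddp} together with the convention ${\bar \delta}_1 = 0$ gives the claim directly. For $t \geq 2$ with $X_t$ of type $S1$, Theorem \ref{convaniddp} sets ${\tt{Err}}_t = {\bar \eta}_t + {\bar \varepsilon}_t$, and here ${\bar \delta}_t := {\bar \eta}_t$, so the identity is immediate. For $t \geq 2$ with $X_t$ of type $S2$, Theorem \ref{convaniddp} sets ${\tt{Err}}_t = {\bar \eta}_t + 2{\bar \varepsilon}_t$, which regroups as $({\bar \eta}_t + {\bar \varepsilon}_t) + {\bar \varepsilon}_t = {\bar \delta}_t + {\bar \varepsilon}_t$.

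Next, I would justify the announced interpretation that ${\bar \delta}_t$ is a uniform upper bound on the cut error ${\underline{\mathcal{Q}}}_t^k(x_{t-1}^k) - \mathcal{C}_t^k(x_{t-1}^k)$. For $X_t$ of type $S1$, Proposition \ref{fixedprop1} applied to problem \eqref{backward1ter} gives this error $\leq \eta_t^k(\varepsilon_t^k)$, which Proposition \ref{propboundeddual} bounds uniformly by ${\bar \eta}_t = {\bar \delta}_t$. For $X_t$ of type $S2$, Proposition \ref{varprop1} applied to problem \eqref{backward1terS2} gives the cut error $\leq \varepsilon_t^k + \eta_t^k(\varepsilon_t^k) \leq {\bar \varepsilon}_t + {\bar \eta}_t = {\bar \delta}_t$. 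This justifies the interpretation of ${\bar \delta}_t$ and ${\bar \varepsilon}_t$ separated in the statement.

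Finally, I would invoke Theorem \ref{convaniddp}(i) to conclude that any accumulation point $(x_1^*,\ldots,x_T^*)$ of $(x_1^k,\ldots,x_T^k)_k$ is a $\bigl(\sum_{\tau=1}^T \tau\,{\tt{Err}}_\tau\bigr)$-optimal solution of \eqref{defpb}, and then use the uniform bounds ${\bar \delta}_\tau \leq {\bar \delta}$ and ${\bar \varepsilon}_\tau \leq {\bar \varepsilon}$ together with the identity of the first step to estimate
\[
\sum_{\tau=1}^T \tau\,{\tt{Err}}_\tau = \sum_{\tau=1}^T \tau({\bar \delta}_\tau + {\bar \varepsilon}_\tau) \leq ({\bar \delta} + {\bar \varepsilon}) \sum_{\tau=1}^T \tau = \frac{T(T+1)}{2}({\bar \delta} + {\bar \varepsilon}).
\]
The main obstacle is really just notational care in matching the definitions of ${\bar \delta}_t$ (which differ between the two structural cases) with the error decomposition of Theorem \ref{convaniddp}; no new analysis is required, and the final bound is a one-line arithmetic consequence.
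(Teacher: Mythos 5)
Your proposal is correct and follows exactly the reasoning the paper intends: the corollary is indeed just a regrouping of the error terms from Theorem \ref{convaniddp}(i) (checking ${\tt{Err}}_t=\bar\delta_t+\bar\varepsilon_t$ in each structural case, identifying $\bar\delta_t$ with the cut-error bounds from Propositions \ref{fixedprop1} and \ref{varprop1} together with the uniform bounds $\bar\eta_t$ from Proposition \ref{propboundeddual}), followed by the one-line estimate $\sum_{\tau=1}^T\tau\,{\tt{Err}}_\tau\leq(\bar\delta+\bar\varepsilon)\frac{T(T+1)}{2}$. The paper gives no separate proof beyond the text of the corollary itself, and your three verifications match that implicit argument step for step.
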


\section{Inexact Stochastic Dual Dynamic Programming (ISDDP)}\label{sec:isddp}

In this section we introduce ISDDP, an inexact variant of SDDP 
which combines the tools developed in Sections \ref{sec:computeinexactcuts} and \ref{sec:boundingmulti} with SDDP.

\subsection{Problem formulation and assumptions}\label{sec:sddp1}

ISDDP applies to multistage stochastic nonlinear optimization problems of the form
\begin{equation}\label{pbtosolve}
\begin{array}{l} 
\displaystyle{\inf_{x_1,\ldots,x_T}} \; \mathbb{E}_{\xi_2,\ldots,\xi_T}[ \displaystyle{\sum_{t=1}^{T}}\;f_t(x_t(\xi_1,\xi_2,\ldots,\xi_t ), x_{t-1}(\xi_1,\xi_2,\ldots,\xi_{t-1}), \xi_t )]\\
x_t(\xi_1,\xi_2,\ldots,\xi_t) \in X_t( x_{t-1}(\xi_1,\xi_2,\ldots,\xi_{t-1}), \xi_t )\;\mbox{a.s.}, \;x_{t} \;\mathcal{F}_t\mbox{-measurable, }t=1,\ldots,T,
\end{array}
\end{equation}
where $x_0$ is given,  $(\xi_t)_{t=2}^T$ is a stochastic process, $\mathcal{F}_t$ is the sigma-algebra
$\mathcal{F}_t:=\sigma(\xi_j, j\leq t)$, and $X_t(x_{t-1}, \xi_t ),t=1,\ldots,T$, can be of two types:
\begin{itemize}
\item[(S1)] $X_t( x_{t-1}, \xi_t ) = \mathcal{X}_t \subset \mathbb{R}^n$ (in this case, for short, we say that $X_t$ is of type S1); 
\item[(S2)] $X_t( x_{t-1} , \xi_t)= \{x_t \in \mathbb{R}^n : x_t \in \mathcal{X}_t,\;g_t(x_t, x_{t-1}, \xi_t) \leq 0,\;\;\displaystyle A_{t} x_{t} + B_{t} x_{t-1} = b_t \}$.
In this case, for short, we say that $X_t$ is of type S2 and $\xi_t$ contains in particular the random elements in matrices $A_t, B_t$, and vector $b_t$.
\end{itemize}
Same as problem class \eqref{defpb}, a mix of these types of constraints is allowed: for instance we can have $X_1$ of type S1 and $X_2$ of type $S2$.\\

We make the following assumption on $(\xi_t)$:\\
\par (Sto-H0) $(\xi_t)$ 
is interstage independent and
for $t=2,\ldots,T$, $\xi_t$ is a random vector taking values in $\mathbb{R}^K$ with a discrete distribution and
a finite support $\Theta_t=\{\xi_{t 1}, \ldots, \xi_{t M}\}$ while $\xi_1$ is deterministic.\footnote{To simplify notation and without loss of generality, we have assumed that the number $M$ of possible realizations
of $\xi_t$, the size $K$ of $\xi_t$, and $n$ of $x_t$ do not depend on $t$.}\\

We will denote by $A_{t j}, B_{t j},$ and $b_{t j}$ the realizations of respectively $A_t, B_t,$ and $b_t$
in $\xi_{t j}$. For this problem, we can write Dynamic Programming equations: assuming that $\xi_1$ is deterministic,
the first stage problem is 
\begin{equation}\label{firststodp}
\mathcal{Q}_1( x_0 ) = \left\{
\begin{array}{l}
\inf_{x_1 \in \mathbb{R}^n} F_1(x_1, x_0, \xi_1) := f_1(x_1, x_0, \xi_1)  + \mathcal{Q}_2 ( x_1 )\\
x_1 \in X_1( x_{0}, \xi_1 )\\
\end{array}
\right.
\end{equation}
for $x_0$ given and for $t=2,\ldots,T$, $\mathcal{Q}_t( x_{t-1} )= \mathbb{E}_{\xi_t}[ \mathfrak{Q}_t ( x_{t-1},  \xi_{t}  )  ]$ with
\begin{equation}\label{secondstodp} 
\mathfrak{Q}_t ( x_{t-1}, \xi_{t}  ) = 
\left\{ 
\begin{array}{l}
\inf_{x_t \in \mathbb{R}^n}  F_t(x_t, x_{t-1}, \xi_t ) :=  f_t ( x_t , x_{t-1}, \xi_t ) + \mathcal{Q}_{t+1} ( x_t )\\
x_t \in X_t ( x_{t-1}, \xi_t ),
\end{array}
\right.
\end{equation}
with the convention that $\mathcal{Q}_{T+1}$ is null.

We set $\mathcal{X}_0=\{x_0\}$ and make the following assumptions (Sto-H1) on the problem data: there exists $\varepsilon>0$ such that for $t=1,\ldots,T$,\\
\par (Sto-H1)-(a) $\mathcal{X}_t$ is nonempty, convex, and compact.
\par (Sto-H1)-(b) For every $x_{t}, x_{t-1} \in \mathbb{R}^{n}$ the function
$f_t(x_{t}, x_{t-1}, \cdot)$ is measurable and for every $j=1,\ldots,M$, the function
$f_t(\cdot, \cdot,\xi_{t j})$ is convex on  $\mathcal{X}_t \small{\times} \mathcal{X}_{t-1}$
and belongs to  $\mathcal{C}^{1}(\mathcal{X}_t \small{\times} \mathcal{X}_{t-1})$.\\

For $t=1,\ldots,T$, if $X_t$ is of type $S2$ we additionally assume that there exists $\varepsilon_t>0$ such that
(without loss of generality, we will assume in the sequel that $\varepsilon_t=\varepsilon$):\\

\par (Sto-H1)-(c) for every $j=1,\ldots,M$, each component $g_{t i}(\cdot, \cdot, \xi_{t j}), i=1,\ldots,p$, of the function $g_t(\cdot, \cdot, \xi_{t j})$ is 
convex on $\mathcal{X}_t \small{\times} \mathcal{X}_{t-1}^{\varepsilon_t}$
and belongs to $\mathcal{C}^{1}( \mathcal{X}_t \small{\times} \mathcal{X}_{t-1} )$.
\par (Sto-H1)-(d) For every $j=1,\ldots,M$, for every
$x_{t-1} \in \mathcal{X}_{t-1}^{\varepsilon_t}$,
the set $X_t(x_{t-1}, \xi_{t j}) \cap \mbox{ri}( \mathcal{X}_t)$ is nonempty.
\par  (Sto-H1)-(e) If $t \geq 2$, for every $j=1,\ldots,M$, there exists
${\bar x}_{t j}=({\bar x}_{t j t}, {\bar x}_{t j t-1}  ) \in  \mbox{ri}(\mathcal{X}_t) \small{\times} \mathcal{X}_{t-1}$
such that $g_t(\bar x_{t j t}, \bar x_{t j t-1}, \xi_{t j}) < 0$ and  
$A_{t j} \bar x_{t j t} + B_{t j} \bar x_{t j t-1} = b_{t j}$.\\

These assumptions are natural extensions of Assumptions (H1) to the stochastic case.
Due to Assumption (Sto-H0), the $M^{T-1}$ realizations of $(\xi_t)_{t=1}^T$ form a scenario tree of depth $T+1$
where the root node $n_0$ associated to a stage $0$ (with decision $x_0$ taken at that
node) has one child node $n_1$
associated to the first stage (with $\xi_1$ deterministic).

We denote by $\mathcal{N}$ the set of nodes, by
{\tt{Nodes}}$(t)$ the set of nodes for stage $t$ and
for a node $n$ of the tree, we define: 
\begin{itemize}
\item $C(n)$: the set of children nodes (the empty set for the leaves);
\item $x_n$: a decision taken at that node;
\item $p_n$: the transition probability from the parent node of $n$ to $n$;
\item $\xi_n$: the realization of process $(\xi_t)$ at node $n$\footnote{The same notation $\xi_{\tt{Index}}$ is used to denote
the realization of the process at node {\tt{Index}} of the scenario tree and the value of the process $(\xi_t)$
for stage {\tt{Index}}. The context will allow us to know which concept is being referred to.
In particular, letters $n$ and $m$ will only be used to refer to nodes while $t$ will be used to refer to stages.}:
for a node $n$ of stage $t$, this realization $\xi_n$ contains in particular the realizations
$b_n$ of $b_t$, $A_{n}$ of $A_{t}$, and $B_{n}$ of $B_{t}$.
\item $\xi_{[n]}$: the history of the realizations of process $(\xi_t)$ from the first stage node $n_1$ to node $n$:
 for a node $n$ of stage $t$, the $i$-th component of $\xi_{[n]}$ is $\xi_{\mathcal{P}^{t-i}(n)}$ for $i=1,\ldots, t$,
 where $\mathcal{P}:\mathcal{N} \rightarrow \mathcal{N}$ is the function 
 associating to a node its parent node (the empty set for the root node).
\end{itemize}

\subsection{ISDDP algorithm} \label{sec:isddpalgo}

Similarly to SDDP, at iteration $k$ of the ISDDP algorithm,  trial points $x_n^k$ are computed in a forward pass
for all nodes $n$ of the scenario tree replacing recourse functions 
$\mathcal{Q}_{t+1}$ by the approximations $\mathcal{Q}_{t+1}^{k-1}$ available at the beginning of this iteration.

In a backward pass, we then select a set of nodes $(n_1^k, n_2^k, \ldots, n_T^k)$ 
(with $n_1^k=n_1$, and for $t \geq 2$, $n_t^k$ a node of stage $t$, child of node $n_{t-1}^k$) 
corresponding to a sample $({\tilde \xi}_1^k, {\tilde \xi}_2^k,\ldots, {\tilde \xi}_T^k)$
of $(\xi_1, \xi_2,\ldots, \xi_T)$. For $t=2,\ldots,T$, a cut  
\begin{equation}\label{eqcutctk}
\mathcal{C}_t^k( x_{t-1} ) = \theta_t^{k} - \eta_t^k ( \varepsilon_t^k ) +  \langle \beta_t^{k}, x_{t-1}-x_{n_{t-1}^{k}}^{k} \rangle
\end{equation}
is computed for $\mathcal{Q}_t$ at $x_{n_{t-1}^{k}}^{k}$ (see the ISDDP algorithm below for the computation of $\theta_t^{k}, \eta_t^k ( \varepsilon_t^k), \beta_t^{k}$).
At the end of iteration $k$, we obtain the polyhedral lower approximations $\mathcal{Q}_{t}^{k}$ of $\mathcal{Q}_t,\;t=2,\ldots,T+1$, 
given by
$$
\begin{array}{lll}
\mathcal{Q}_{t}^{k}(x_{t-1}) & = &\displaystyle \max_{0 \leq \ell \leq k}\;\mathcal{C}_t^{\ell}( x_{t-1} ).
\end{array}
$$ 
The detailed ISDDP algorithm is given below.\\

\par {\bf ISDDP (Inexact Stochastic Dual Dynamic Programming for multistage stochastic nonlinear programs).}
\begin{itemize}
\item[Step 1)] {\textbf{Initialization.}} For $t=2,\ldots,T$,  take as initial approximations $\mathcal{Q}_t^0 \equiv -\infty$.  
Set $x_{n_0}^0 = x_0$, set the iteration count $k$ to 1, and $\mathcal{Q}_{T+1}^0 \equiv 0$. 
\item[Step 2)] {\textbf{Forward pass.}} \\
{\textbf{For }}$t=1,\ldots,T$,\\
\hspace*{0.8cm}{\textbf{For }}every node $n$ of stage $t-1$,\\
\hspace*{1.6cm}{\textbf{For }}every child node $m$ of node $n$, compute an $\varepsilon_t^k$-optimal solution $x_m^k$ of
\begin{equation} \label{defxtkj}
{\underline{\mathfrak{Q}}}_t^{k-1}( x_n^k , \xi_m ) = \left\{
\begin{array}{l}
\displaystyle \inf_{x_m} \; F_t^{k-1}(x_m , x_n^k, \xi_m):= f_t( x_m , x_n^k , \xi_m ) + \mathcal{Q}_{t+1}^{k-1}( x_m ) \\
x_m \in X_t( x_n^k, \xi_m ),
\end{array}
\right.
\end{equation}
\hspace*{2.4cm}where $x_{n_0}^k = x_0$.\\
\hspace*{1.6cm}{\textbf{End For}}\\
\hspace*{0.8cm}{\textbf{End For}}\\
{\textbf{End For}}
\item[Step 3)] {\textbf{Backward pass.}}\\
Select a set of nodes $(n_1^k, n_2^k, \ldots, n_T^k)$ 
with $n_t^k$ a node of stage $t$ ($n_1^k=n_1$ and for $t \geq 2$, $n_t^k$
a child node of $n_{t-1}^k$)
corresponding to a sample $({\tilde \xi}_1^k, {\tilde \xi}_2^k,\ldots, {\tilde \xi}_T^k)$
of $(\xi_1, \xi_2,\ldots, \xi_T)$.\\
Set $\theta_{T+1}^k=0, \eta_{T+1}^k = 0$, and $\beta_{T+1}^k=0$.\\
{\textbf{For }}$t=T,\ldots,2$,\\
\hspace*{0.8cm}{\textbf{For }}every child node $m$ of $n=n_{t-1}^k$ \\
\hspace*{1.6cm}{\textbf{If}} $X_t$ is of type $S1$ compute an $\varepsilon_t^{k}$-optimal  solution $x_m^{B k}$ of
$$
{\underline{\mathfrak{Q}}}_t^k( x_n^k , \xi_m ) = \left\{
\begin{array}{l}
\displaystyle \inf_{x_m} \;F_t^k(x_m, x_n^k , \xi_m):=f_t( x_m , x_n^k , \xi_m) + \mathcal{Q}_{t+1}^k ( x_m )\\ 
x_m \in \mathcal{X}_t.\\
\end{array}
\right.
$$
\hspace*{1.6cm}Compute 
\begin{equation}\label{l1isddp}
\ell_{1 t}^{k m}(x_m^{B k}, x_n^k ) =
\left\{ 
\begin{array}{l}
\displaystyle \max_{x_m} \displaystyle  \langle  \nabla_{x_t} f_t ( x_m^{B k}, x_n^k, \xi_m ), x_m^{B k} - x_m  \rangle   + \mathcal{Q}_{t+1}^{k}( x_m^{B k} ) - \mathcal{Q}_{t+1}^{k}( x_m ) \\  
x_m \in \mathcal{X}_t,
\end{array}
\right.
\end{equation}
\hspace*{1.6cm}and coefficients
$$
\begin{array}{lcl}
\theta_{t}^{k m} &=& f_t ( x_m^{B k}, x_n^k, \xi_m ) + \mathcal{Q}_{t+1}^{k}( x_m^{B k} ),\\
\eta_{t}^{k m}(\varepsilon_t^k ) &= &\ell_{1 t}^{k m}(x_m^{B k}, x_n^k ),\\
\beta^{k m}&=&\nabla_{x_{t-1}} f_t ( x_m^{B k}, x_n^k, \xi_m ).
\end{array}
$$
\hspace*{1.6cm}{\textbf{Else if}} $X_t$ is of type $S2$ compute an $\varepsilon_t^k$-optimal solution $x_{m}^{B k}$ of \\
\begin{equation}\label{primalpbisddp}
{\underline{\mathfrak{Q}}}_t^k(x_n^k , \xi_m ) = \left\{
\begin{array}{l}
\displaystyle \inf_{x_m} \;F_t^k(x_m , x_n^k ,  \xi_m):=f_t( x_m , x_n^k , \xi_m) + \mathcal{Q}_{t+1}^k ( x_m )\\ 
x_m \in X_t(x_n^k , \xi_m   ).\\
\end{array}
\right.
\end{equation}
\hspace*{1.6cm}Compute an $\varepsilon_t^k$-optimal solution $(\lambda_m^k, \mu_m^k)$ of the dual problem
\begin{equation}\label{dualpbisddp}
\begin{array}{l}
\displaystyle \max_{\lambda, \mu, x_m}  h_{t, x_n^k}^{k m}( \lambda , \mu )\\
\lambda = A_m x_m + B_m x_n^k  - b_m,\;x_m \in \mbox{Aff}( \mathcal{X}_t ), \; \mu \geq 0,
\end{array}
\end{equation}
\hspace*{1.6cm}where the dual function $h_{t, x_n^k}^{k m}$ is given by
$$
h_{t, x_n^k}^{k m}( \lambda , \mu )=
\left\{ 
\begin{array}{l}
\displaystyle \inf_{x_m} F_t^k ( x_m, x_n^k , \xi_m ) + \langle  \lambda , A_m x_m + B_m x_n^k -b_m \rangle + \langle \mu , g_t ( x_m , x_n^k , \xi_m ) \rangle \\
x_m \in \mathcal{X}_t.
\end{array}
\right.
$$
\hspace*{1.6cm}Compute the optimal value $\ell_{2 t}^{k m}(x_m^{B k}, x_n^k , \lambda_m^k , \mu_m^k, \xi_m )$ of the optimization problem\footnote{Observe that this is a linear program if $\mathcal{X}_t$ is polyhedral.}
\begin{equation}\label{l2isddp}
{\small{
\begin{array}{l}
\displaystyle \max_{x_m \in \mathcal{X}_t} \displaystyle  \langle \nabla_{x_t} f_t ( x_m^{B k}, x_n^k, \xi_m )  +  A_m^T \lambda_m^k + \sum_{i=1}^p \mu_m^{k}(i) \nabla_{x_t} g_{t i}(x_m^{B k}, x_n^k, \xi_m ) , x_m^{B k} - x_m  \rangle  + \mathcal{Q}_{t+1}^{k}( x_m^{B k} ) - \mathcal{Q}_{t+1}^{k}( x_m ),
\end{array}
}}
\end{equation}
\hspace*{1.6cm}and coefficients
$$
\begin{array}{lcl}
\theta_{t}^{k m}& =& f_t ( x_m^{B k}, x_n^k, \xi_m ) +  \mathcal{Q}_{t+1}^k ( x_m^{B k} )  + \langle \mu_m^k ,  g_{t} ( x_m^{B k} , x_n^k , \xi_m ) \rangle,\\
\eta_{t}^{k m}(\varepsilon_t^k )& =& \ell_{2 t}^{k m}(x_m^{B k}, x_n^k , \lambda_m^k , \mu_m^k, \xi_m ) ,\\
\beta^{k m}&=&\nabla_{x_{t-1}} f_t ( x_m^{B k}, x_n^k, \xi_m )+ B_m^T \lambda_m^k +  \sum_{i=1}^p \mu_m^k(i) \nabla_{x_{t-1}} g_{t i} ( x_m^{B k} , x_n^k , \xi_m ) .
\end{array}
$$
\hspace*{1.6cm}{\textbf{End If}}\\
\hspace*{0.8cm}{\textbf{End For}}\\
\hspace*{0.8cm}The new cut $\mathcal{C}_t^k$ is obtained  computing
\begin{equation}\label{formulathetak}
\theta_t^k=\sum_{m \in C(n)} p_m \theta_{t}^{k m},\;\;\eta_t^{k}(\varepsilon_t^k ) = \sum_{m \in C(n)} p_m \eta_{t}^{k m}(\varepsilon_t^k ) ,\;\;\beta_t^k=\sum_{m \in C(n)} p_m  \beta^{k m}.
\end{equation}
{\textbf{End For}}
\item[Step 4)] Do $k \leftarrow k+1$ and go to Step 2).\\
\end{itemize}

Observe that, as in IDDP, it is assumed that for ISDDP, 
nonlinear optimization problems  are solved approximately
whereas linear optimization problems are solved exactly. 
Since in ISDDP we compute the optimal value $\ell_{1 t}^{k m}(x_m^{B k}, x_n^k )$
of optimization problem \eqref{l1isddp} and the optimal value
$\ell_{2 t}^{k m}(x_m^{B k}, x_n^k , \lambda_m^k , \mu_m^k, \xi_m )$
of optimization problem \eqref{l2isddp}, it is assumed that these problems are linear. Since these optimization problems have a linear objective function, they are linear programs if and only if 
$\mathcal{X}_t$ is polyhedral. If this is not the case then
(a) either we add components  to $g$ 
pushing
the nonlinear constraints in the representation of $\mathcal{X}_t$ in $g$ or
(b) we also solve \eqref{l1isddp} and \eqref{l2isddp} approximately.  
In Case (b), we can still build an inexact cut $\mathcal{C}_t^k$ (see Proposition \ref{fixedprop2} and Remark \ref{remextincutvar}) and study the 
convergence of the corresponding variant of ISDDP along the lines of Section \ref{sec:convsddp}.

\subsection{Convergence analysis}\label{sec:convsddp}

Similarly to the deterministic case, we can easily check that functions $\mathcal{Q}_t$ are Lipschitz continuous
on $\mathcal{X}_{t-1}$: 
\begin{lemma} \label{convrecfuncQtS} Let Assumptions (Sto-H0) and (Sto-H1) hold. Then for $t=2,\ldots,T+1$, function $\mathcal{Q}_t$ is convex and Lipschitz continuous on 
$\mathcal{X}_{t-1}$.
\end{lemma}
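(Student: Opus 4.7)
The plan is to mimic the proof of Lemma \ref{convrecfuncQt} (the deterministic counterpart) and then use the finite-support structure of $\xi_t$ to transfer convexity and Lipschitz continuity from each individual $\mathfrak{Q}_t(\cdot,\xi_{tj})$ to the expectation $\mathcal{Q}_t$. The argument proceeds by backward induction on $t$, with base case $t=T+1$ being trivial since $\mathcal{Q}_{T+1}\equiv 0$.

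For the induction step, I would assume $\mathcal{Q}_{t+1}$ is convex and Lipschitz on $\mathcal{X}_t$ and show the same for $\mathcal{Q}_t$ on $\mathcal{X}_{t-1}$. Fix $j\in\{1,\ldots,M\}$ and consider the function
$$
\mathfrak{Q}_t(x_{t-1},\xi_{tj}) = \inf_{x_t\in X_t(x_{t-1},\xi_{tj})}\bigl\{f_t(x_t,x_{t-1},\xi_{tj})+\mathcal{Q}_{t+1}(x_t)\bigr\}.
$$
I would handle the two structural cases separately, exactly as in Lemma \ref{convrecfuncQt}. If $X_t$ is of type $S1$, convexity of $\mathfrak{Q}_t(\cdot,\xi_{tj})$ on $\mathcal{X}_{t-1}$ follows from (Sto-H1)-(a),(b), and continuity of $f_t(\cdot,\cdot,\xi_{tj})$ on the compact set $\mathcal{X}_t\small{\times}\mathcal{X}_{t-1}^{\varepsilon_0}$ (for some $\varepsilon_0>0$) ensures that $\mathfrak{Q}_t(\cdot,\xi_{tj})$ is finite-valued on a neighborhood of $\mathcal{X}_{t-1}$. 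If $X_t$ is of type $S2$, convexity follows from (Sto-H1)-(a),(b),(c), and finiteness on a neighborhood of $\mathcal{X}_{t-1}$ follows by combining the finiteness of $f_t$ on $\mathcal{X}_t\small{\times}\mathcal{X}_{t-1}^{\varepsilon_0}$ with (Sto-H1)-(d), which guarantees nonemptiness of $X_t(x_{t-1},\xi_{tj})\cap\mbox{ri}(\mathcal{X}_t)$ for $x_{t-1}$ in a neighborhood of $\mathcal{X}_{t-1}$. In either case, $\mathcal{X}_{t-1}$ lies in the interior of the domain of the convex function $\mathfrak{Q}_t(\cdot,\xi_{tj})$, so $\mathfrak{Q}_t(\cdot,\xi_{tj})$ is Lipschitz continuous on $\mathcal{X}_{t-1}$.

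Finally, since $\xi_t$ has finite support $\{\xi_{t1},\ldots,\xi_{tM}\}$ with probabilities $p_{tj}$,
$$
\mathcal{Q}_t(x_{t-1}) = \sum_{j=1}^M p_{tj}\,\mathfrak{Q}_t(x_{t-1},\xi_{tj})
$$
is a finite convex combination of convex functions that are Lipschitz continuous on $\mathcal{X}_{t-1}$, hence is itself convex and Lipschitz continuous on $\mathcal{X}_{t-1}$ (with Lipschitz constant bounded by $\sum_j p_{tj} L(\mathfrak{Q}_t(\cdot,\xi_{tj}))$). This closes the induction.

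There is no real obstacle; the argument is essentially a realization-by-realization reduction to Lemma \ref{convrecfuncQt} plus the standard fact that nonnegative combinations preserve convexity and Lipschitz continuity. The only point requiring care is verifying that the same $\varepsilon_0$-neighborhood argument used in the deterministic case goes through uniformly in $j$, which is immediate because $M$ is finite and each of the $M$ problems satisfies (Sto-H1)-(a)–(e).
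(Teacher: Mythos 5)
Your proof is correct and follows essentially the same route as the paper: a backward induction in which each $\mathfrak{Q}_t(\cdot,\xi_{tj})$ is shown convex and Lipschitz on $\mathcal{X}_{t-1}$ by the argument of Lemma \ref{convrecfuncQt}, after which $\mathcal{Q}_t=\sum_{j=1}^M \mathbb{P}(\xi_t=\xi_{tj})\,\mathfrak{Q}_t(\cdot,\xi_{tj})$ inherits both properties as a finite convex combination. You merely spell out the details that the paper leaves as "analogous."
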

\begin{proof} The proof is analogous to the proof of Lemma \ref{convrecfuncQt} (by backward induction on $t$, noting that
the fact that $\mathfrak{Q}_t(\cdot, \xi_{t j})$ is convex Lipschitz continuous can be justified using the arguments that
have shown this property for $\mathcal{Q}_t$ in Lemma \ref{convrecfuncQt} and since 
$\mathcal{Q}_t(\cdot)=\mathbb{E}_{\xi_t}[\mathfrak{Q}_t ( \cdot, \xi_t ) ]=\sum_{j=1}^M \mathbb{P}(\xi_t = \xi_{t j}) \mathfrak{Q}_t ( \cdot, \xi_{t j} )$,
convexity and Lipschitz continuity of $\mathcal{Q}_t$ on $\mathcal{X}_{t-1}$ follow). \hfill 
\end{proof}

In Proposition \ref{propboundeddualsto}, we show that the cut coefficients and approximate dual solutions computed in the backward passes are almost surely bounded
with the following additional assumption:\\

\par (Sto-H2) For $t=2, \ldots, T$, there exists $\kappa_t>0, r_t>0$ such that for every $x_{t-1} \in \mathcal{X}_{t-1}$,
for every $j=1,\ldots,M$,
there exists $x_t \in \mathcal{X}_t$ such that $\mathbb{B}(x_t, r_t) \cap \mbox{Aff}( \mathcal{X}_t ) \neq \emptyset$,
$A_{t j} x_t + B_{t j} x_{t-1}=b_{t j}$, and for every $i=1,\ldots,p$, $g_{t i}( x_t, x_{t-1}, \xi_{t j}) \leq -\kappa_t$. \\
\begin{prop}\label{propboundeddualsto} Assume that noises $(\varepsilon_t^k)_{k \geq 1}$ are bounded: for 
$t=1,\ldots,T$, we have $0 \leq \varepsilon_t^k \leq {\bar \varepsilon}_t< +\infty$.
If Assumptions (Sto-H0), (Sto-H1), and (Sto-H2) hold then the sequences 
$(\theta_{t}^k)_{t ,k}$, $(\eta_{t}^k (  \varepsilon_{t}^k ))_{t, k}$, $( \beta_t^k )_{t, k}$, 
$(\lambda_{m}^k )_{m, k}$, $( \mu_{m}^k )_{m, k}$
 generated by the ISDDP algorithm are almost surely bounded: for $t=2,\ldots,T+1$, there exists
 a compact set $C_t$ such that the sequence  
 $(\theta_{t}^k , \eta_{t}^k (  \varepsilon_{t}^k ), \beta_t^k )_{k \geq 1}$ almost surely belongs to 
 $C_t$ and for every $t=2,\ldots,T$, 
 if $X_t$ is of type $S2$ then for every $m \in {\tt{Nodes}}(t)$,
 there exists
 a compact set $\mathcal{D}_m$ such that the sequence  
 $( \lambda_{m}^k , \mu_m^k )_{k \geq 1}$ almost surely belongs to 
 $\mathcal{D}_m$.
\end{prop}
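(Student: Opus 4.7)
The plan is to mirror the proof of Proposition \ref{propboundeddual} by backward induction on $t$, with the extra layer of indexing over children nodes $m \in C(n)$ absorbed by taking uniform bounds over the finitely many realizations of $\xi_t$. Define the induction hypothesis $\mathcal{H}(t)$: the sequence $(\theta_t^k, \eta_t^k(\varepsilon_t^k), \beta_t^k)_{k\geq 1}$ almost surely lies in some compact set $C_t$, which in particular yields a common Lipschitz constant $L_t$ valid for every function in the family $(\mathcal{Q}_t^k)_k$ and every sample path. The base case $\mathcal{H}(T+1)$ is trivial since all coefficients at stage $T+1$ vanish.

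For the inductive step, assume $\mathcal{H}(t+1)$ holds. The trial points $x_n^k$ live in the compact set $\mathcal{X}_{t-1}$ for every $n \in {\tt Nodes}(t-1)$, and $\xi_m \in \Theta_t$ takes only finitely many values. For each node $m \in C(n)$, when $X_t$ is of type $S2$, I would apply Corollary \ref{corpropboundnormepsdualsol} to the primal-dual pair \eqref{primalpbisddp}-\eqref{dualpbisddp}. The Slater condition \eqref{slaterboundmulti} follows from Assumption (Sto-H2) exactly as Lemma \ref{boundcutcoeff} derived \eqref{slater} from (H2) in the deterministic case, and the radius $r_m$ together with $\kappa_t$ can be chosen uniformly in $x_n^k \in \mathcal{X}_{t-1}$ because (Sto-H2) is stated uniformly in $x_{t-1}$ and $j$. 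Assumption (Sto-H1) together with Lemma \ref{convrecfuncQtS} ensures that $f_t(\cdot,\cdot,\xi_m) + \mathcal{Q}_{t+1}(\cdot)$ is a convex Lipschitz upper bound on the objective $F_t^k(\cdot, x_n^k, \xi_m)$, and $\mathcal{L}_m := \min_{x_{t-1}\in\mathcal{X}_{t-1}} {\underline{\mathfrak{Q}}}_t^1(x_{t-1},\xi_m)$ provides a lower bound on the optimal value. This yields a bound $\|(\lambda_m^k, \mu_m^k)\| \leq U_m$ depending only on $\xi_m$ and the deterministic problem data, hence a compact set $\mathcal{D}_m$ containing $(\lambda_m^k,\mu_m^k)_k$.

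With the dual bound in hand, I would bound $\theta_t^{k m}, \eta_t^{k m}(\varepsilon_t^k), \beta^{k m}$ exactly as in Case B of Proposition \ref{propboundeddual}: $\theta_t^{k m}$ is controlled by extrema of $f_t(\cdot,\cdot,\xi_m), \mathcal{Q}_{t+1}^1, \mathcal{Q}_{t+1}$ on the relevant compact sets plus $U_m$ times a uniform bound on $\|g_t(\cdot,\cdot,\xi_m)\|$; the linear maximum defining $\eta_t^{k m}(\varepsilon_t^k)$ in \eqref{l2isddp} is bounded via the gradient bounds for $f_t, g_t$ and the Lipschitz constant $L_{t+1}$ from $\mathcal{H}(t+1)$, giving the analogue of \eqref{boundetasecond}; and $\|\beta^{k m}\|$ satisfies the analogue of \eqref{formulaLt}. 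When $X_t$ is of type $S1$ no dual variables enter and the argument collapses to Case A. Since $|C(n)|$ is finite, the aggregation formula \eqref{formulathetak} expresses $(\theta_t^k, \eta_t^k(\varepsilon_t^k), \beta_t^k)$ as convex combinations of the per-node quantities, so uniform per-node bounds immediately furnish a compact set $C_t$, completing $\mathcal{H}(t)$.

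The almost-sure qualifier is essentially cosmetic: every bound produced above is pathwise deterministic, depending only on the compact sets $\mathcal{X}_t$, the finite support $\Theta_t$, the Lipschitz and gradient bounds for $f_t, g_t$, and the constants from (Sto-H2); the randomness in ISDDP enters only through the choice of $(n_1^k,\ldots,n_T^k)$, which selects which subset of uniformly bounded quantities actually gets assigned to $(\theta_t^k,\eta_t^k,\beta_t^k)$ at iteration $k$. The main technical point to check carefully is that the parameter $\rho_m$ in Corollary \ref{corpropboundnormepsdualsol} can be constructed uniformly across $x_n^k \in \mathcal{X}_{t-1}$ (it can, since $V_{\mathcal{X}_t}$ and $A_m$ are fixed for a given node $m$, and the ball radius $r_t$ from (Sto-H2) is uniform in $x_{t-1}$), so no sample-path-dependent constants appear in the final bounds.
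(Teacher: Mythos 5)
Your proposal is correct and follows essentially the same route as the paper, whose own proof of Proposition \ref{propboundeddualsto} consists of the single remark that it is analogous to Proposition \ref{propboundeddual}; your write-up is a faithful and accurate expansion of that analogy (backward induction, the Slater condition from (Sto-H2) via the argument of Lemma \ref{boundcutcoeff}, Corollary \ref{corpropboundnormepsdualsol} for the dual bounds, and aggregation over the finitely many children via \eqref{formulathetak}). The observation that all bounds are pathwise deterministic, so the almost-sure qualifier is automatic, is also the right reading.
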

\begin{proof} The proof is analogous to the proof of Proposition \ref{propboundeddual}.\hfill
\end{proof}

We will assume that the sampling procedure in ISDDP satisfies the following property:\\

\par (Sto-H3) The samples in the backward passes are independent: $(\tilde \xi_2^k, \ldots, \tilde \xi_T^k)$ is a realization of
$\xi^k=(\xi_2^k, \ldots, \xi_T^k) \sim (\xi_2, \ldots,\xi_T)$ 
and $\xi^1, \xi^2,\ldots,$ are independent.\\

We can now study the convergence of ISDDP:
\begin{thm}[Convergence of ISDDP for multistage stochastic convex nonlinear programs]\label{convanisddp}
Consider the sequences of stochastic decisions $x_n^k$ and of recourse functions $\mathcal{Q}_ t^k$
generated by ISDDP.
Let Assumptions (Sto-H1), (Sto-H2), and (Sto-H3) hold and assume that for $t=1,\ldots,T$, we have
$\lim_{k \rightarrow +\infty} \varepsilon_{t}^k =0$. Then
\begin{itemize}
\item[(i)] almost surely, for $t=2,\ldots,T+1$, the following holds:
$$
\mathcal{H}(t): \;\;\;\forall n \in {\tt{Nodes}}(t-1), \;\; \displaystyle \lim_{k \rightarrow +\infty} \mathcal{Q}_{t}(x_{n}^{k})-\mathcal{Q}_{t}^{k}(x_{n}^{k} )=0.
$$
\item[(ii)]
Almost surely, the limit of the sequence
$( {F}_1^{k-1}(x_{n_1}^k , x_0 ,  \xi_1) )_k$ of the approximate first stage optimal values
and of the sequence
$({\underline{\mathfrak{Q}}}_1^{k}(x_{0}, \xi_1))_k$
is the optimal value 
$\mathcal{Q}_{1}(x_0)$ of \eqref{pbtosolve}.
Let $\Omega=(\Theta_2 \small{\times} \ldots \small{\times} \Theta_T)^{\infty}$ be the sample space
of all possible sequences of scenarios equipped with the product $\mathbb{P}$ of the corresponding 
probability measures. Define on $\Omega$ the random variable 
$x^* = (x_1^*, \ldots, x_T^*)$ as follows. For $\omega \in \Omega$, consider the 
corresponding sequence of decisions $( (x_n^k( \omega ))_{n \in \mathcal{N}} )_{k \geq 1}$
computed by ISDDP. Take any accumulation point
$(x_n^* (\omega) )_{n \in \mathcal{N}}$ of this sequence. 
If $\mathcal{Z}_t$ is the set of $\mathcal{F}_t$-measurable functions,
define $x_1^*(\omega),\ldots,x_T^*(\omega)$ taking $x_t^{*}(\omega): \mathcal{Z}_t \rightarrow \mathbb{R}^n$ given by
$x_t^{*}(\omega)( \xi_1, \ldots, \xi_t  )=x_{m}^{*}(\omega)$ where $m$ is given by $\xi_{[m]}=(\xi_1,\ldots,\xi_t)$ for $t=1,\ldots,T$.
Then $\mathbb{P}((x_1^*,\ldots,x_T^*) \mbox{ is an optimal solution to \eqref{pbtosolve}})  =1$.
\end{itemize}
\end{thm}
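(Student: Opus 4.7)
The plan is to mimic the structure of the proof of Theorem~\ref{convaniddp}(ii), replacing deterministic iteration indices by subsequences of iterations along which a given node of the scenario tree is the currently sampled one, and invoking Propositions~\ref{propvanish1} and~\ref{propvanish1dual} to kill the cut-accuracy errors $\eta_t^{km}(\varepsilon_t^k)$ whenever $\varepsilon_t^k\to 0$. Part~(ii) will then be a straightforward consequence of part~(i) applied at the first stage, together with a feasibility/optimality limiting argument on accumulation points.

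For part~(i), I argue by backward induction on $t$. The base case $\mathcal{H}(T+1)$ is immediate since $\mathcal{Q}_{T+1}\equiv\mathcal{Q}_{T+1}^k\equiv 0$. For the inductive step, I use two ingredients. First, Assumption (Sto-H3) together with the Borel--Cantelli lemma applied to the i.i.d.\ sampling $(\xi^k)$ implies that, almost surely, every node $n\in\mathcal{N}$ is the sampled node ``$n=n_{t-1}^k$'' for infinitely many iterations; let $K(n)$ denote this (random, a.s.\ infinite) set. Second, Proposition~\ref{propboundeddualsto} ensures that along $K(n)$ the coefficients $(\theta_t^{km},\eta_t^{km}(\varepsilon_t^k),\beta_t^{km},\lambda_m^k,\mu_m^k)_{m\in C(n)}$ stay in a compact set, so we may extract a further subsequence along which all these quantities, together with the forward/backward iterates $(x_n^k,x_m^{Bk})$, converge. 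Fix $n\in\text{Nodes}(t-1)$ and $k\in K(n)$. Writing $\mathcal{Q}_t(x_n^k)-\mathcal{Q}_t^k(x_n^k)\le\mathcal{Q}_t(x_n^k)-\mathcal{C}_t^k(x_n^k)$, decomposing $\mathcal{Q}_t=\mathbb{E}_{\xi_t}\mathfrak{Q}_t$, and using the two cases $S1$/$S2$ exactly as in the derivations leading to \eqref{caseAeqconv1} and \eqref{caseBeqconv1}, I obtain
\[
0\le \mathcal{Q}_t(x_n^k)-\mathcal{Q}_t^k(x_n^k)
\le \sum_{m\in C(n)} p_m\!\left[\mathfrak{Q}_t(x_n^k,\xi_m)-\underline{\mathfrak{Q}}_t^k(x_n^k,\xi_m)+\eta_t^{km}(\varepsilon_t^k)+\mathbf{1}_{S2}\varepsilon_t^k\right].
\]

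The heart of the argument is then to show that each summand tends to $0$ along $K(n)$. The term $\mathfrak{Q}_t(x_n^k)-\underline{\mathfrak{Q}}_t^k(x_n^k,\xi_m)$ is handled exactly as in the deterministic induction step of Theorem~\ref{convaniddp}(ii): using the $\varepsilon_t^k$-optimality of $x_m^{Bk}$ in \eqref{primalpbisddp} and the induction hypothesis $\mathcal{H}(t+1)$ applied at node $m$ (so that $\mathcal{Q}_{t+1}^{k-1}(x_m^{Bk})-\mathcal{Q}_{t+1}(x_m^{Bk})\to 0$, via Lemma~A.1 of \cite{lecphilgirar12} to pass from $k$ to $k-1$), one gets convergence to $0$. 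The vanishing of $\eta_t^{km}(\varepsilon_t^k)$ as $\varepsilon_t^k\to 0$ is the delicate point: I apply Proposition~\ref{propvanish1} if $X_t$ is of type $S1$ and Proposition~\ref{propvanish1dual} if $X_t$ is of type $S2$, with $X=\mathcal{X}_{t-1}$, $Y=\mathcal{X}_t$, $f=f_t(\cdot,\cdot,\xi_m)$, $g=g_t(\cdot,\cdot,\xi_m)$ (for $S2$), $\mathcal{Q}^k=\mathcal{Q}_{t+1}^k$, and with $y^k(\varepsilon^k)=x_m^{Bk}$ an $\varepsilon_t^k$-feasible optimal primal solution and $(\lambda^k,\mu^k)=(\lambda_m^k,\mu_m^k)$ an $\varepsilon_t^k$-optimal dual solution. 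Assumption (H) of Proposition~\ref{propvanish1dual} follows from (Sto-H2); the uniform Lipschitz control on $(\mathcal{Q}_{t+1}^k)$ needed in both propositions comes from Proposition~\ref{propboundeddualsto}. This yields $\mathcal{Q}_t(x_n^k)-\mathcal{Q}_t^k(x_n^k)\to 0$ along $K(n)$. Finally, since $\mathcal{Q}_t^k$ is nondecreasing in $k$ and $\mathcal{Q}_t^k\le\mathcal{Q}_t$ on the compact set $\mathcal{X}_{t-1}$ with uniform Lipschitz bound $L(\mathcal{Q}_t)$, another application of Lemma~A.1 of \cite{lecphilgirar12} upgrades convergence along $K(n)$ to convergence along the full sequence $k\to\infty$, which gives $\mathcal{H}(t)$ at node $n$; intersecting the resulting a.s.\ events over the finite set of nodes $n\in\text{Nodes}(t-1)$ preserves the a.s.\ statement.

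Part~(ii) now follows quickly. Applying the same computation as in the derivation of \eqref{finalF1ii}, one has a.s.
\[
\mathcal{Q}_1(x_0)\ge \underline{\mathfrak{Q}}_1^k(x_0,\xi_1)\ge F_1^{k-1}(x_{n_1}^k,x_0,\xi_1)-\varepsilon_1^k\ge \mathcal{Q}_1(x_0)+\mathcal{Q}_2^{k-1}(x_{n_1}^k)-\mathcal{Q}_2(x_{n_1}^k)-\varepsilon_1^k,
\]
and $\mathcal{H}(2)$ together with Lemma~A.1 of \cite{lecphilgirar12} (to move from $k$ to $k-1$) forces both $\underline{\mathfrak{Q}}_1^k(x_0,\xi_1)$ and $F_1^{k-1}(x_{n_1}^k,x_0,\xi_1)$ to converge to $\mathcal{Q}_1(x_0)$. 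For the optimality of accumulation points, I proceed as in the last part of the proof of Theorem~\ref{convaniddp}(ii): given an accumulation point $(x_n^*(\omega))_{n\in\mathcal{N}}$, I pass to the limit along a suitable subsequence in
\[
\mathfrak{Q}_t(x_n^k,\xi_m)\ge F_t^{k-1}(x_m^k,x_n^k,\xi_m)-\varepsilon_t^k = f_t(x_m^k,x_n^k,\xi_m)+\mathcal{Q}_{t+1}^{k-1}(x_m^k)-\varepsilon_t^k,
\]
use $\mathcal{H}(t+1)$ and continuity of $f_t(\cdot,\cdot,\xi_m)$, $\mathcal{Q}_{t+1}$, to deduce $\mathfrak{Q}_t(x_n^*,\xi_m)\ge f_t(x_m^*,x_n^*,\xi_m)+\mathcal{Q}_{t+1}(x_m^*)$ for every child $m$ of $n$. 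Averaging with weights $p_m$, summing over the tree along each scenario, and using closedness of $\mathcal{X}_t$ and lower semicontinuity of $g_t$ to get nonanticipative feasibility of $x^*$, gives that $x^*$ attains $\mathcal{Q}_1(x_0)$ and is therefore an optimal solution of \eqref{pbtosolve} almost surely.

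The main obstacle is the vanishing-$\eta$ step: Propositions~\ref{propvanish1} and~\ref{propvanish1dual} require uniform Lipschitz control and boundedness of the approximating functions $\mathcal{Q}_{t+1}^k$ and of the dual multipliers $(\lambda_m^k,\mu_m^k)$, and it is here that (Sto-H2) and Proposition~\ref{propboundeddualsto} (bounding $\varepsilon$-optimal dual solutions via the analysis of Section~\ref{sec:boundingmulti}) are essential, together with the a.s.\ ``infinitely often sampled'' property guaranteed by (Sto-H3).
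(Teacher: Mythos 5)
Your overall architecture (backward induction, decomposition over children nodes, Propositions \ref{propvanish1} and \ref{propvanish1dual} for the vanishing of $\eta_t^{km}(\varepsilon_t^k)$, Proposition \ref{propboundeddualsto} for compactness of multipliers, and the limiting argument for part (ii)) matches the paper's proof. However, there is one genuine gap at the most delicate step: the passage from convergence along the sampled subsequence $K(n)=\mathcal{S}_n$ to convergence along the full sequence of iterations. You claim this follows from ``another application of Lemma~A.1 of \cite{lecphilgirar12}'' using monotonicity of $\mathcal{Q}_t^k$ and the uniform Lipschitz bound. That lemma only shifts the \emph{function} index by one (from $\mathcal{Q}_t^k(x^k)$ to $\mathcal{Q}_t^{k-1}(x^k)$ along a fixed sequence of points); it does not let you pass from a subsequence of \emph{trial points} to the whole sequence. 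For $k\notin\mathcal{S}_n$ no cut is generated at $x_n^k$, and nothing in monotonicity or Lipschitz continuity forces $x_n^k$ to be close to earlier trial points of $\mathcal{S}_n$ at which cuts were computed, so the gap $\mathcal{Q}_t(x_n^k)-\mathcal{Q}_t^k(x_n^k)$ could in principle stay bounded away from zero along $k\notin\mathcal{S}_n$. The paper closes this hole with a genuinely probabilistic argument (Lemma 5.4 of \cite{guilejtekregsddp}, based on the Strong Law of Large Numbers), which crucially uses the independence of $\xi_{t-1}^k$ from the history $\bigl((x_n^j)_{j\le k},(\mathcal{Q}_t^j)_{j\le k-1}\bigr)$ guaranteed by (Sto-H3); this is exactly why the theorem is stated almost surely rather than deterministically, and why Remark \ref{importantremark} points out that the sampled-forward-pass variant only yields \eqref{variantalg1}. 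Your deterministic shortcut does not substitute for this step.

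A secondary slip: in bounding $\mathfrak{Q}_t(x_n^k,\xi_m)-{\underline{\mathfrak{Q}}}_t^k(x_n^k,\xi_m)$ you invoke the induction hypothesis at the backward-pass points $x_m^{Bk}$, whereas the chain \eqref{eqconv1bis0s} and the statement of $\mathcal{H}(t+1)$ apply to the forward-pass decisions $x_m^k$ at the children nodes $m$; the lower bound ${\underline{\mathfrak{Q}}}_t^k\ge{\underline{\mathfrak{Q}}}_t^{k-1}\ge F_t^{k-1}(x_m^k,x_n^k,\xi_m)-\varepsilon_t^k$ is what brings in $\mathcal{Q}_{t+1}^{k-1}(x_m^k)-\mathcal{Q}_{t+1}(x_m^k)$. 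This is easily repaired, but as written the induction hypothesis is being applied to the wrong sequence of points.
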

\begin{proof} Let $\Omega_1$ be the event on the sample space $\Omega$  of sequences
of scenarios such that every scenario is sampled an infinite number of times.
Due to (Sto-H3), this event has probability one.
Take an arbitrary  realization $\omega$ of ISDDP in $\Omega_1$. 
To simplify notation we will use $x_n^k, \mathcal{Q}_t^k, \theta_t^k, \eta_t^k(\varepsilon_t^k), \beta_t^k, \lambda_m^k, \mu_m^k$ instead 
of $x_n^k(\omega), \mathcal{Q}_t^k(\omega), \theta_t^k(\omega), \eta_t^k(\varepsilon_t^k)(\omega)$, $\beta_t^k( \omega ), \lambda_m^k( \omega ), \mu_m^k ( \omega )$.\\

\par Let us prove (i). 
We want to show that $\mathcal{H}(t), t=2,\ldots,T+1$, hold for that realization.
The proof is by backward induction on $t$. For $t=T+1$, $\mathcal{H}(t)$ holds
by definition of $\mathcal{Q}_{T+1}$, $\mathcal{Q}_{T+1}^k$. Now assume that $\mathcal{H}(t+1)$ holds
for some $t \in \{2,\ldots,T\}$. We want to show that $\mathcal{H}(t)$ holds.
Take an arbitrary node $n \in {\tt{Nodes}}(t-1)$. For this node we define 
$\mathcal{S}_n=\{k \geq 1 : n_{t-1}^k = n\}$ the set of iterations such that the sampled scenario passes through node $n$.
Observe that $\mathcal{S}_n$ is infinite because the realization of ISDDP is in $\Omega_1$.
We first show that 
$$
\displaystyle \lim_{k \rightarrow +\infty, k \in \mathcal{S}_n } \mathcal{Q}_{t}(x_{n}^{k})-\mathcal{Q}_{t}^{k}(x_{n}^{k} )=0. 
$$
For $k \in \mathcal{S}_n$, we have $n_{t-1}^k =n$, i.e., $x_n^k = x_{n_{t-1}^k}^k$, which implies
\begin{equation}\label{firsteqstosddp}
\mathcal{Q}_t ( x_n^k ) \geq \mathcal{Q}_t^k ( x_n^k ) \geq \mathcal{C}_t^k ( x_n^k ) = \theta_t^k - \eta_t^{k}(\varepsilon_t^k ) =  \sum_{m \in C(n)} p_m ( \theta_{t}^{k m} - \eta_{t}^{k m}(\varepsilon_t^k ) ). 
\end{equation}
Let us now bound $\theta_{t}^{k m}$ from below, considering two cases: $X_t$ is of type $S1$ (Case A) and $X_t$ is of type $S2$ (Case B).

In Case A we have $\theta_{t}^{k m} \geq {\underline{\mathfrak{Q}}}_t^k(x_n^k , \xi_m )$.
In Case B, observe that due to Assumption (Sto-H1)-(e), we can show (exactly as in the proof of Lemma \ref{boundcutcoeff}) that 
a Slater constraint qualification of form \eqref{slaterboundmulti} holds for primal problem
\eqref{primalpbisddp}  and therefore the optimal value  of dual problem   \eqref{dualpbisddp}
is the optimal value  ${\underline{\mathfrak{Q}}}_t^k(x_n^k , \xi_m )$ of primal problem \eqref{primalpbisddp}. Using the definition of $h_{t, x_n^k}^{k m}$
and the fact that $x_m^{B k} \in \mathcal{X}_t$ it follows that
$$
\theta_{t}^{k m} \geq h_{t, x_n^k}^{k m}( \lambda_m^k , \mu_m^k ) \geq 
{\underline{\mathfrak{Q}}}_t^k(x_n^k , \xi_m ) - \varepsilon_t^k.
$$
Next, we have the following lower bound on ${\underline{\mathfrak{Q}}}_t^k(x_n^k , \xi_m )$ for all $k \in \mathcal{S}_n$:
\begin{equation} \label{eqconv1bis0s}
\begin{array}{lcl}
{\underline{\mathfrak{Q}}}_t^k(x_n^k , \xi_m ) & \geq & {\underline{\mathfrak{Q}}}_t^{k-1}(x_n^k , \xi_m )  \mbox{ by monotonicity,}\\
& \geq  &  F_t^{k-1}(x_m^{k}, x_{n}^k, \xi_m ) -  \varepsilon_t^k    \mbox{ by definiton of } x_m^k,\\
& = &  f_t(x_m^{k}, x_{n}^k, \xi_m ) + \mathcal{Q}_{t+1}^{k-1}(x_m^{k}) -  \varepsilon_t^k  \mbox{ by definition of }F_t^{k-1},\\
& = &  F_t(x_m^{k}, x_{n}^k , \xi_m) + \mathcal{Q}_{t+1}^{k-1}(x_m^{k}) - \mathcal{Q}_{t+1}( x_m^{k})    -  \varepsilon_t^k   \mbox{ by definition of }F_t,\\
& \geq &  \mathfrak{Q}_t(x_{n}^k , \xi_m )+ \mathcal{Q}_{t+1}^{k-1}(x_m^{k}) - \mathcal{Q}_{t+1}( x_m^{k})  -  \varepsilon_t^k, 
\end{array}
\end{equation}
where for the last inequality we have used the definition of $\mathfrak{Q}_t$  and the fact that $x_m^k \in X_t ( x_n^k , \xi_m )$.

Combining \eqref{firsteqstosddp} with \eqref{eqconv1bis0s} and using our lower bound on $\theta_t^{k m}$, we obtain
\begin{equation} \label{eqconv1bisfutures}
\left\{
\begin{array}{l}
0 \leq \mathcal{Q}_{t}(x_{n}^k) - \mathcal{Q}_{t}^k(x_{n}^k)  \leq 
\varepsilon_t^k + \displaystyle \sum_{m \in C(n)} p_m \eta_t^{k m}( \varepsilon_t^ k ) + \displaystyle \sum_{m \in C(n)} p_m \Big(  \mathcal{Q}_{t+1}( x_m^k ) - \mathcal{Q}_{t+1}^{k-1}( x_m^k ) \Big)\\
\mbox{ if }X_t\mbox{ is of }\mbox{ type }S1 \mbox{ and }\\
0 \leq \mathcal{Q}_{t}(x_{n}^k) - \mathcal{Q}_{t}^k(x_{n}^k)  \leq  2 \varepsilon_t^k + \displaystyle \sum_{m \in C(n)} p_m \eta_t^{k m}( \varepsilon_t^ k ) + \displaystyle \sum_{m \in C(n)} p_m \Big(  \mathcal{Q}_{t+1}( x_m^k ) - \mathcal{Q}_{t+1}^{k-1}( x_m^k ) \Big)\\
\mbox{if }X_t\mbox{ is of }\mbox{ type }S2.
\end{array}
\right.
\end{equation}
We now show that for every $m \in C(n)$, we have 
\begin{equation}\label{noisesvanishisddp}
\lim_{k \rightarrow +\infty, k \in \mathcal{S}_n} \eta_t^{k m}( \varepsilon_t^ k ) = 0. 
\end{equation}
Let us fix $m \in C(n)$. We consider two cases: $X_t$ is of type $S1$ (Case A) and $X_t$ is of type $S2$ (Case B).\\

\par {\textbf{Case A.}} We have that $x_m^{B k}$ is an $\varepsilon_t^k$-optimal solution of
\begin{equation} \label{defxtkjisddp1}
\left\{
\begin{array}{l}
\displaystyle \inf_{x_m} \; F_t^{k}(x_m , x_n^k, \xi_m):= f_t( x_m , x_n^k , \xi_m ) + \mathcal{Q}_{t+1}^{k}( x_m ) \\
x_m \in \mathcal{X}_t,
\end{array}
\right.
\end{equation}
and $\eta_t^{k m}( \varepsilon_t^ k )$ is the optimal value of the following optimization problem:
\begin{equation} \label{probetakmsto1}
\left\{ 
\begin{array}{l}
\displaystyle \max_{x_m} \displaystyle  \langle  \nabla_{x_t} f_t ( x_m^{B k}, x_n^k, \xi_m ), x_m^{B k} - x_m  \rangle   + \mathcal{Q}_{t+1}^{k}( x_m^{B k} ) - \mathcal{Q}_{t+1}^{k}( x_m ) \\  
x_m \in \mathcal{X}_t.
\end{array}
\right.
\end{equation}
We now check that Proposition \ref{propvanish1} can be applied to problems \eqref{defxtkjisddp1}, \eqref{probetakmsto1} setting:
\begin{itemize}
\item $Y=\mathcal{X}_t, X=\mathcal{X}_{t-1}$ which are nonempty, compact, and convex;
\item $f(y, x)=f_t(y,x,\xi_m)$ which is convex and continuously differentiable on $Y \small{\times} X$;
\item $\mathcal{Q}^k=\mathcal{Q}_{t+1}^k$ which is convex Lipschitz continuous on $Y$ with Lipschitz constant $L_{t+1}$
($L_{t+1}$ is an upper bound on 
$(\|\beta_{t+1}^k \|)_{k \in \mathcal{S}_n}$, see Proposition  \ref{propboundeddualsto}) and satisfies
$$
{\underline{Q}} := \mathcal{Q}_{t+1}^1 \leq \mathcal{Q}^k \leq {\bar{\mathcal{Q}}}:=\mathcal{Q}_{t+1}
$$
on $Y$ with ${\underline{Q}}, {\bar{\mathcal{Q}}}$ continuous on $Y$;
\item $(x^k)_{k \in \mathcal{S}_n} = (x_{n}^k)_{k \in \mathcal{S}_n}$ sequence in $X$ and 
$(y^k)_{k \in \mathcal{S}_n} = (x_m^{B k})_{k \in \mathcal{S}_n}$ sequence in $Y$.
\end{itemize}
Therefore we can apply  Proposition \ref{propvanish1} to obtain \eqref{noisesvanishisddp}.\\

\par {\textbf{Case B.}} Here $x_m^{B k}$ is an $\varepsilon_t^k$-optimal solution of
\begin{equation} \label{defxtkjisddp2}
\begin{array}{l}
\left\{
\begin{array}{l}
\displaystyle \inf_{x_m} \;f_t( x_m , x_n^k , \xi_m) + \mathcal{Q}_{t+1}^k ( x_m )\\ 
x_m \in X_t(x_n^k , \xi_m   ),\\
\end{array}
\right.
\end{array}
\end{equation}
and $\eta_t^{k m}( \varepsilon_t^ k )$ is the optimal value of the following optimization problem:
\begin{equation} \label{probetakmsto2}
\begin{array}{l}
{\small{
\begin{array}{l}
\displaystyle \max_{x_m \in \mathcal{X}_t} \displaystyle  \langle \nabla_{x_t} f_t ( x_m^{B k}, x_n^k, \xi_m )  +  A_m^T \lambda_m^k + \sum_{i=1}^p \mu_m^{k}(i) \nabla_{x_t} g_{t i}(x_m^{B k}, x_n^k, \xi_m ) , x_m^{B k} - x_m  \rangle  + \mathcal{Q}_{t+1}^{k}( x_m^{B k} ) - \mathcal{Q}_{t+1}^{k}( x_m ).
\end{array}
}}
\end{array}
\end{equation}
We now check that Proposition \ref{propvanish1dual} can be applied to problems \eqref{defxtkjisddp2}, \eqref{probetakmsto2} setting:
\begin{itemize}
\item $Y=\mathcal{X}_t, X=\mathcal{X}_{t-1}$ which are nonempty compact, and convex;
\item $f(y, x)=f_t(y,x,\xi_m)$ which is convex and continuously differentiable on $Y \small{\times} X$;
\item $g(y,x)=g_t(y,x,\xi_m) \in \mathcal{C}^1( Y \small{\times} X)$ with components $g_i,i=1,\ldots,p$, convex
on $Y \small{\times}X^{\varepsilon}$;
\item $\mathcal{Q}^k=\mathcal{Q}_{t+1}^k$ which is convex Lipschitz continuous on $Y$ with Lipschitz constant $L_{t+1}$
($L_{t+1}$ is an upper bound on 
$(\|\beta_{t+1}^k \|)_{k \in \mathcal{S}_n}$, see Proposition  \ref{propboundeddualsto}) and satisfies
$$
{\underline{Q}} := \mathcal{Q}_{t+1}^1 \leq \mathcal{Q}^k \leq {\bar{\mathcal{Q}}}:=\mathcal{Q}_{t+1}
$$
on $Y$ with ${\underline{Q}}, {\bar{\mathcal{Q}}}$ continuous on $Y$;
\item $(x^k ) = (x_{n}^k)_{k \in \mathcal{S}_n}$ sequence in $X$, $(\lambda^k, \mu^k )_{k \in \mathcal{S}_n} = (\lambda_m^k , \mu_m^k )_{k \in \mathcal{S}_n}$, and 
$(y^k)_{k \in \mathcal{S}_n} = (x_m^{B k})_{k \in \mathcal{S}_n}$ sequence in $Y$. 
\end{itemize}
With this notation Assumption (H) is satisfied with $\kappa=\kappa_t$, since Assumption (H2) holds.
Therefore we can apply Proposition \ref{propvanish1dual} to obtain \eqref{noisesvanishisddp}.\\

It follows that \eqref{noisesvanishisddp} holds for every $m \in C(n)$ both when $X_t$ is of type $S1$ and of type $S2$.\\

Next, recall that $\mathcal{Q}_{t+1}$ is convex; functions $(\mathcal{Q}_{t+1}^k)_k$ are $L_{t+1}$-Lipschitz;
and for all $k \geq 1$ we have $\mathcal{Q}_{t+1}^k \leq \mathcal{Q}_{t+1}^{k+1} \leq\mathcal{Q}_{t+1}$ on compact set $\mathcal{X}_t$.
Therefore, the induction hypothesis
$$
\lim_{k \rightarrow +\infty} \mathcal{Q}_{t+1}( x_m^k ) - \mathcal{Q}_{t+1}^k ( x_m^k )=0 
$$
implies, using Lemma A.1 in \cite{lecphilgirar12}, that
\begin{equation}\label{indchypreformulatedsto}
\lim_{k \rightarrow +\infty} \mathcal{Q}_{t+1}( x_m^k ) - \mathcal{Q}_{t+1}^{k-1} ( x_m^k )=0 . 
\end{equation}

Plugging \eqref{noisesvanishisddp} and \eqref{indchypreformulatedsto} into
\eqref{eqconv1bisfutures} we obtain 
\begin{equation}\label{mainisddp}
\displaystyle \lim_{k \rightarrow +\infty, k \in \mathcal{S}_n } \mathcal{Q}_{t}(x_{n}^{k})-\mathcal{Q}_{t}^{k}(x_{n}^{k} )=0. 
\end{equation}
It remains to show that
\begin{equation}\label{alsoconvnotinsnisddp}
\displaystyle \lim_{k \rightarrow +\infty, k \notin \mathcal{S}_n } \mathcal{Q}_{t}(x_{n}^{k})-\mathcal{Q}_{t}^{k}(x_{n}^{k} )=0. 
\end{equation}
The relation above can be proved using Lemma 5.4 in \cite{guilejtekregsddp} which can be applied since 
(A) relation \eqref{mainisddp} holds (convergence was shown for the iterations in $\mathcal{S}_n$),
(B) the sequence $(\mathcal{Q}_t^k)_k$ is monotone, i.e., 
$\mathcal{Q}_t^k \geq \mathcal{Q}_t^{k-1}$ for all $k \geq 1$, (C) Assumption (Sto-H3) holds, and
(D) $\xi_{t-1}^k$ is independent on $( (x_{n}^j,j=1,\ldots,k), (\mathcal{Q}_{t}^j,j=1,\ldots,k-1))$.\footnote{Lemma 5.4 in \cite{guilejtekregsddp} is similar to the end of the proof of Theorem 4.1 in \cite{guiguessiopt2016} and uses
the Strong Law of Large Numbers. This lemma itself applies the ideas of the end of the convergence proof of SDDP given in \cite{lecphilgirar12}, which
was given with a different (more general) sampling scheme in the backward pass.} Therefore, we have shown (i).

\par (ii) Recalling that the root node $n_0$ with decision 
$x_0$ taken at that node has a single child node $n_1$ with corresponding decision 
$x_{n_1}^k$ computed at iteration $k$, we have for every $k \geq 1$:
\begin{equation}\label{convoptvalfirst}
\begin{array}{lll}
0 \leq  \mathcal{Q}_1( x_0 ) - {\underline{\mathfrak{Q}}}_1^k(x_0, \xi_1 )  & \leq &  \mathcal{Q}_1( x_0 ) - {\underline{\mathfrak{Q}}}_1^{k-1}(x_0, \xi_1 ),\\
& \leq &  \mathcal{Q}_1( x_0 ) - F_1^{k-1}( x_{n_1}^k, x_0, \xi_1 ) + \varepsilon_1^k,\\
& = &  \mathcal{Q}_1( x_0 ) - f_1( x_{n_1}^k, x_0, \xi_1 )  -  \mathcal{Q}_2^{k-1}(  x_{n_1}^k ) + \varepsilon_1^k,\\
& = &  \mathcal{Q}_1( x_0 ) - F_1( x_{n_1}^k, x_0, \xi_1 ) + \mathcal{Q}_2 ( x_{n_1}^k ) -  \mathcal{Q}_2^{k-1}(  x_{n_1}^k ) + \varepsilon_1^k,\\
& \leq & \mathcal{Q}_2 ( x_{n_1}^k ) -  \mathcal{Q}_2^{k-1}(  x_{n_1}^k ) + \varepsilon_1^k.
\end{array}
\end{equation}
We have shown in (i) that 
\begin{equation}\label{deduce1}
\lim_{k \rightarrow +\infty} \mathcal{Q}_2 ( x_{n_1}^k ) -  \mathcal{Q}_2^{k}(  x_{n_1}^k )=0. 
\end{equation}
Since  $\mathcal{Q}_{2}$ is convex, functions $(\mathcal{Q}_{2}^k)_k$ are $L_{2}$-Lipschitz,
and for all $k \geq 1$ we have $\mathcal{Q}_{2}^k \leq \mathcal{Q}_{2}^{k+1} \leq\mathcal{Q}_{2}$ on compact set $\mathcal{X}_1$,
we can once again apply Lemma A.1 in \cite{lecphilgirar12}, to deduce from \eqref{deduce1} that
$\lim_{k \rightarrow +\infty} \mathcal{Q}_2 ( x_{n_1}^k ) -  \mathcal{Q}_2^{k-1}(  x_{n_1}^k )=0$, which, combined
with \eqref{convoptvalfirst}, gives 
$$
\lim_{k \rightarrow +\infty} {\underline{\mathfrak{Q}}}_1^k(x_0, \xi_1 ) = \lim_{k \rightarrow +\infty} F_1^{k-1}( x_{n_1}^k, x_0, \xi_1 )  = \mathcal{Q}_1( x_0 ).
$$

Now take an accumulation point $(x_n^*)_{n \in \mathcal{N}}$ 
of the sequence $((x_n^k)_{n \in \mathcal{N}})_{k \geq 1}$ and let
$K$ be an infinite set of iterations such that for every $n \in \mathcal{N}$, $\lim_{k \rightarrow +\infty, k \in K} x_n^k= x_n^*$.\footnote{The existence of an accumulation point comes from the fact that 
the decisions belong to a compact set.}
Combining inequalities \eqref{eqconv1bis0s} which hold for every $k \geq 1, t=2,\ldots,T$, with  \eqref{convoptvalfirst}, we get 
for every $t=1,\ldots,T$, for every $n \in {\tt{Nodes}}(t-1)$, for every $m \in C(n)$,
\begin{equation}
-\varepsilon_t^k \leq \mathfrak{Q}_t( x_n^k , \xi_m ) - F_t^{k-1}( x_m^k , x_n^k , \xi_m ) \leq \mathcal{Q}_{t+1}( x_m^k )- \mathcal{Q}_{t+1}^{k-1}( x_m^k ). 
\end{equation}
From (i) we have $\lim_{k \rightarrow +\infty} \mathcal{Q}_{t+1}( x_m^k )- \mathcal{Q}_{t+1}^{k-1}( x_m^k )=0$ which implies that 
for every $t=1,\ldots,T$, for every $n \in {\tt{Nodes}}(t-1)$, for every $m \in C(n)$,
\begin{equation}\label{convoptval}
\lim_{k \rightarrow +\infty} \mathfrak{Q}_t( x_n^k , \xi_m ) - F_t^{k-1}( x_m^k , x_{n}^k, \xi_m )=0.
\end{equation}
We will now use the continuity of $\mathfrak{Q}_t(\cdot, \xi_m)$ which follows from (Sto-H1) (see Lemma 3.2 in \cite{guiguessiopt2016} for a proof).
We have
\begin{equation}\label{optsolconv}
\begin{array}{lll}
\mathfrak{Q}_{t} ( x_{n}^* , \xi_m )  & = & \displaystyle  \lim_{k \rightarrow +\infty, k\in K} \mathfrak{Q}_{t} ( x_{n}^k , \xi_m ) \mbox{ using the continuity of }\mathfrak{Q}_t(\cdot, \xi_m),\\
& =& \displaystyle \lim_{k \rightarrow +\infty, k\in K} F_t^{k-1}( x_m^k , x_{n}^k, \xi_m ) \mbox{ using }\eqref{convoptval},\\
& = & \displaystyle \lim_{k \rightarrow +\infty, k\in K} f_t(x_m^k , x_{n}^k, \xi_m ) + \mathcal{Q}_{t+1}^{k-1}(x_m^k ),\\
& = & f_t(x_m^*, x_n^*, \xi_m) + \displaystyle \lim_{k \rightarrow +\infty, k\in K} \mathcal{Q}_{t+1}(x_m^k ) \mbox{ using (i) and continuity of }f_t,\\
& = &f_t(x_m^*, x_n^*, \xi_m) + \mathcal{Q}_{t+1}(x_m^* )=F_t(x_m^*, x_n^*, \xi_m)
\end{array}
\end{equation}
where for the last equality we have used the continuity of $\mathcal{Q}_{t+1}$.
To achieve the proof of (ii) it suffices to observe that the sequence $(x_m^k, x_n^k)_{k \in K}$ belongs
to the set 
$$
{\bar X}_{t, m}=\{(x_{t},x_{t-1}) \in \mathcal{X}_{t}  \small{\times} \mathcal{X}_{t-1} : g_t(x_{t},x_{t-1},\xi_m ) \leq 0, \,A_m x_t + B_m x_{t-1} = b_m\}
$$
and this set is closed since $g_t$ is lower semicontinuous and $\mathcal{X}_t$ is closed.
Therefore $x_m^* \in X_t(x_n^*, \xi_m)$, which, together with \eqref{optsolconv}, shows that
$x_m^*$ is an optimal solution of $\mathfrak{Q}_{t} ( x_{n}^* , \xi_m )=\inf\{F_t(x_m , x_n^* , \xi_m) \;:\;x_m \in X_t(x_n^*, \xi_m)\}$
and
completes the proof of (ii).
\hfill
\end{proof}

\begin{rem}\label{importantremark}
In ISDDP algorithm presented in Section \ref{sec:isddpalgo}, decisions are computed at every iteration for all the nodes of the scenario tree
in the forward pass.
However, in practice, at iteration $k$  decisions will only be computed for the nodes $(n_1^k,\ldots,n_T^k)$
and their children nodes. For this variant of ISDDP, the backward pass is exactly as the backward of ISDDP presented in Section \ref{sec:isddpalgo}
while the forward pass reads as follows:\\

\par {\textbf{Forward pass with sampling for ISDDP.}}\\

Select a set of nodes $(n_1^k, n_2^k, \ldots, n_T^k)$ 
with $n_t^k$ a node of stage $t$ ($n_1^k=n_1$ and for $t \geq 2$, $n_t^k$
a child node of $n_{t-1}^k$)
corresponding to a sample $({\tilde \xi}_1^k, {\tilde \xi}_2^k,\ldots, {\tilde \xi}_T^k)$
of $(\xi_1, \xi_2,\ldots, \xi_T)$.\\

{\textbf{For }}$t=1,\ldots,T$,\\
\hspace*{0.8cm}Setting $m=n_{t}^k$ and $n=n_{t-1}^k$, compute an $\varepsilon_t^k$-optimal solution $x_m^k$ of
\begin{equation} \label{defxtkj}
{\underline{\mathfrak{Q}}}_t^{k-1}( x_n^k , \xi_m ) = \left\{
\begin{array}{l}
\displaystyle \inf_{y} \; F_t^{k-1}(y , x_n^k, \xi_m):= f_t( y , x_n^k , \xi_m ) + \mathcal{Q}_{t+1}^{k-1}( y ) \\
y \in X_t( x_n^k, \xi_m ),
\end{array}
\right.
\end{equation}
\hspace*{0.8cm}where $x_{n_0}^k = x_0$.
\par {\textbf{End For}}\\

This variant of ISDDP will build the same cuts and compute the same decisions for the nodes of the
sampled scenarios as ISDDP described in Section \ref{sec:isddpalgo}. For this variant, for a node $n$, the decision variables $(x_n^k)_k$ are defined for
an infinite subset ${\tilde{\mathcal{S}}}_{n}$ of iterations where the sampled scenario passes through the parent node of node $n$, i.e., 
${\tilde{\mathcal{S}}}_{n}=\mathcal{S}_{\mathcal{P}(n)}$.
With this notation, for this variant, applying Theorem \ref{convanisddp}-(i), we get for $t=2,\ldots,T+1$,
\begin{equation}\label{variantalg1}
\mbox{for all }n \in {\tt{Nodes}}(t-1), \lim_{k \rightarrow +\infty, k \in \mathcal{S}_{\mathcal{P}(n)}} \mathcal{Q}_{t}(x_{n}^{k})-\mathcal{Q}_{t}^{k}(x_{n}^{k} )=0
\end{equation}
almost surely. Also almost surely, the limit of the sequence
$({F}_1^{k-1}(x_{n_1}^k , x_0 , \xi_1) )_k$ of the approximate first stage optimal values
is the optimal value 
$\mathcal{Q}_{1}(x_0)$ of \eqref{pbtosolve}. The variant of ISDDP without sampling in the forward pass was presented  first to allow for the application
of Lemma 5.4 from \cite{guilejtekregsddp}. More specifically, item (D): $\xi_{t-1}^k$ is independent on $( (x_{n}^j,j=1,\ldots,k), (\mathcal{Q}_{t}^j,j=1,\ldots,k-1))$, 
given in the end of the proof of (i) of Theorem \ref{convanisddp} does not apply for ISDDP with sampling in the forward pass.
\end{rem}

\section{Conclusion}

We have introduced the first inexact variants of DDP and SDDP to solve respectively nonlinear deterministic and stochastic dynamic programming
equations. We have shown that these methods solve the dynamic programming equations for vanishing noises.

This study opens the way to a series of interesting issues:
\begin{itemize}
\item[a)] For linear dynamic programming equations, inexact variants of DDP and SDDP can still be derived.
For these problems, inexact cuts can easily be obtained for the cost-to-go functions $\mathcal{Q}_t$ on the basis
of approximate dual solutions. Indeed, since the dual of a linear program is also a linear program, feasible dual solutions
provide valid cuts. It would be worth writing and testing on real-life applications modelled by multistage stochastic linear
programs the corresponding inexact variant of SDDP. Note that we have assumed in our analysis that linear programs can be solved
exactly. For this variant of ISDDP, inexactness would be "forced", by solving inexactly the subproblems in
the first iterations and stages and increasing the precision of the computed solutions as the algorithm progresses. 
This inexact variant of SDDP applied to MSLPs could well converge more quickly than exact SDDP on some instances for well chosen noises $\varepsilon_t^k$. 
\item[b)] For constraints of type $S1$, we can obtain simpler formulas for inexact cuts when the objective function $f_t$
is strongly convex jointly in $(x_t, x_{t-1})$. It would be interesting to compare the quality of these cuts with the inexact
cuts from Section \ref{fixedcsetcut}.
\item[c)] To derive inexact cuts for value function $\mathcal{Q}$ given by \eqref{vfunctionq}, we could rely on the strong convexity 
of the objective function and on the strong concavity of the dual function, when these assumptions are satisfied. Unfortunately, for the decomposition methods under consideration in this paper,
such tool cannot be used 
since the objectives of the problems solved in the backward passes involve a piecewise affine function $\mathcal{Q}_{t+1}^k$ and therefore 
the corresponding dual functions are not strongly concave. 
However, this technique can well be applied for two-stage stochastic nonlinear problems, coupled with, for instance, level methods.
We intend to pursue this idea in a forthcoming paper.
\item[d)] Finally, it would be interesting to implement IDDP and ISDDP on various instances of deterministic and stochastic nonlinear dynamic programming
equations using various strategies for noises $\varepsilon_t^k$.
\end{itemize}

\section*{Acknowledgments} The author's research was 
partially supported by an FGV grant, CNPq grant 307287/2013-0,
and FAPERJ grant E-26/201.599/2014. The author would like to thank Ren\'e Henrion and Arkadi Nemirovski
for useful discussions.

\section*{Appendix}

\begin{lemma} \label{optcondonediffonenotdiff}
Consider the optimization problem
\begin{equation}\label{pboptappendix}
\left\{
\begin{array}{l}
\min f_0(x) + f_1(x)\\
x \in X
\end{array}
\right.
\end{equation}
with $X \subset \mathbb{R}^n$ nonempty, closed, and convex,
$f_0:X  \rightarrow \mathbb{R}$ differentiable and convex and $f_1: X \rightarrow \mathbb{R}$ convex. Then 
$x_*$ is an optimal solution to \eqref{pboptappendix} if and only if for every $x \in X$ we have
$$
\langle \nabla_x f_0( x_* ) , x - x_* \rangle + f_1 ( x ) - f_1 ( x_* ) \geq 0.
$$
\end{lemma}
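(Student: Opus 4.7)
The plan is to prove both directions by combining the standard directional-derivative argument for the differentiable part with pointwise convexity of the non-differentiable part.

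For necessity, I would fix an arbitrary $x \in X$ and exploit convexity of $X$: for $t \in (0,1]$ the point $x_t := x_* + t(x - x_*)$ lies in $X$, so optimality of $x_*$ yields
$$
f_0(x_t) + f_1(x_t) \geq f_0(x_*) + f_1(x_*).
$$
Using convexity of $f_1$ we have $f_1(x_t) \leq (1-t) f_1(x_*) + t f_1(x)$, which after rearrangement gives
$$
\frac{f_0(x_t) - f_0(x_*)}{t} + f_1(x) - f_1(x_*) \geq 0.
$$
Letting $t \downarrow 0$ and using differentiability of $f_0$ at $x_*$, the first term converges to $\langle \nabla f_0(x_*), x - x_* \rangle$, yielding the desired inequality.

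For sufficiency, I would just add two inequalities. The gradient inequality for the convex differentiable function $f_0$ gives
$$
f_0(x) - f_0(x_*) \geq \langle \nabla f_0(x_*), x - x_* \rangle \quad \forall x \in X,
$$
while the hypothesis provides $f_1(x) - f_1(x_*) \geq -\langle \nabla f_0(x_*), x - x_* \rangle$. Summing these two inequalities the gradient terms cancel and we obtain $f_0(x) + f_1(x) \geq f_0(x_*) + f_1(x_*)$ for every $x \in X$, which is exactly optimality of $x_*$.

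There is no real obstacle here; the only subtlety is to make sure the convex combination $x_t$ stays in $X$ (immediate from convexity and the fact that both $x$ and $x_*$ lie in $X$) and that the limit $t \downarrow 0$ is justified (immediate from differentiability of $f_0$, no differentiability of $f_1$ being needed because the $f_1$ contribution is handled by a single convex-combination estimate rather than a derivative).
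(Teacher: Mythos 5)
Your proof is correct and complete: the necessity direction via the convex combination $x_t=(1-t)x_*+tx$, the convexity estimate on $f_1$, and the limit $t\downarrow 0$ is the standard argument, and the sufficiency direction by summing the gradient inequality for $f_0$ with the hypothesis is immediate. The paper states this appendix lemma without proof, so there is nothing to compare against; your argument is exactly the textbook one that the author implicitly relies on.
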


\addcontentsline{toc}{section}{References}
\bibliographystyle{plain}
\bibliography{Bibliography}

\end{document}